\newtheorem{theorem}{Theorem}[section]
\newtheorem{lemma}[theorem]{Lemma}
\newtheorem{corollary}[theorem]{Corollary}
\newtheorem{conjecture}[theorem]{Conjecture}
\newcommand{\dist}{{\mathit dist}}
\newcommand{\diam}{{\mathit diam}}
\newcommand{\depth}{{\mathit depth}}
\title[Separations in symmetric graphs]{Small separations in vertex transitive graphs}
\author[M. DeVos]{Matt DeVos}
\address{Department of Mathematics,
        Simon Fraser University,
        Burnaby, B.C.}
\email{{\tt mdevos@sfu.ca}}
\thanks{Supported in part by an NSERC Discovery Grant.}
\author[B. Mohar]{Bojan Mohar}
\address{Department of Mathematics,
        Simon Fraser University,
        Burnaby, B.C.}
\email{{\tt mohar@sfu.ca}}
\thanks{Supported in part by an NSERC Discovery Grant,
        by the Canada Research Chair Program, and 
        by the ARRS, Research Program P1-0297.}
\thanks{On leave from IMFM \& FMF, Department of Mathematics,
        University of Ljubljana,
        1000 Ljubljana, Slovenia.}
\newcommand{\DEF}[1]{{\em #1\/}}
\newcommand{\ZZ}{\mbox{$\mathbb Z$}}
\newcommand{\ZZZ}{{\mathbb Z}}
\newcommand{\NN}{\mbox{$\mathbb N$}}
\renewcommand\int{\mathop{\hbox{{\rm int}}}}
\begin{document}

\begin{abstract}
Let $k$ be an integer. We prove a rough structure theorem for separations
of order at most $k$ in finite and infinite vertex transitive graphs.  Let
$G = (V,E)$ be a vertex transitive graph, let $A \subseteq V$ be a finite
vertex-set with $|A| \le \frac12 |V|$ and
$|\{ v \in V \setminus A : \mbox{$u \sim v$ for some $u \in A$} \}|\le k$.
We show that whenever the diameter of $G$ is at least $31(k+1)^2$, either
$|A| \le 2k^3+k^2$, or $G$ has a ring-like structure (with bounded parameters),
and $A$ is efficiently contained in an interval. This theorem may be viewed as a rough characterization, generalizing an earlier result of Tindell, and has applications to the study of product sets and expansion in groups.
\end{abstract}

\maketitle

\section{Overview}

The study of expansion in vertex transitive graphs and in groups divides naturally into
the study of local expansion, or \DEF{connectivity},
and the study of global expansion, or \DEF{growth}.
The expansion properties of a group are those of its Cayley graphs, so vertex
transitive graphs are the more general setting.  Our main result, Theorem \ref{main_thm}, concerns local expansion in vertex transitive graphs, but it is also meaningful for groups, and it has some asymptotic applications.
Theorem \ref{main_thm} can be viewed as a rough characterization of vertex transitive graphs with small separations. It is shown that a separator of order $k$ in a vertex transitive graph separates a set of bounded size unless the graph has a ring-like structure and the separated set is essentially an interval in this structure.
This result reaches far beyond any previous results on separation in vertex transitive graphs, since $k$ does not need to be bounded in terms of the degree of the graphs.
Several corollaries (\ref{thm:3}, \ref{cor:1.10}, \ref{cor:17}, and \ref{cor:19}) indicate some of diverse possibilities of applications of our main theorem.

Graphs and groups appearing in this paper may be finite or infinite. Nevertheless,
it is assumed that graphs are locally finite and groups are finitely generated.

We continue with a tour of some of the important theorems in expansion.

\medskip

\subsection*{Local Expansion in Groups}

This is the study of small sum sets or small product sets.
Let ${\mathcal G}$ be a (multiplicative) group and let $A,B \subseteq {\mathcal G}$.
The main questions of interest here are lower bounds on $|AB|$, and in the case when
$|AB|$ is small, finding the structure of the sets $A$ and $B$.
The first important result in this area was proved by Cauchy and (independently) Davenport.  For every  positive integer $n$, we let $\ZZ_n = \ZZ / n\ZZ$.

\begin{theorem}[Cauchy \cite{Cau}, Davenport \cite{Dav}]
Let $p$ be prime and let $A,B \subseteq \ZZ_p$ be nonempty.  Then
$|A + B| \ge \min\{ p, |A| + |B| - 1 \}$.
\end{theorem}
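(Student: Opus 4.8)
The plan is to give the classical elementary argument via the Dyson--Davenport transform. First I would dispose of the easy regime: if $|A|+|B| \ge p+1$, then for every $g \in \ZZ_p$ the sets $A$ and the translate $g - B = \{g - b : b \in B\}$ have sizes summing to more than $|\ZZ_p|$, hence they intersect; an element of the intersection is of the form $g-b = a$ with $a \in A$, $b \in B$, so $g = a+b \in A+B$. Thus $A+B = \ZZ_p$ and the bound holds with equality to $p$. From now on I may assume $|A|+|B| \le p$, and the goal reduces to the clean inequality $|A+B| \ge |A|+|B|-1$.

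For this I would induct on $|B|$ (one may assume $|B|\le|A|$, though it is not needed). If $|B| = 1$ then $A+B$ is a translate of $A$ and equality holds. For the inductive step, where $|B| \ge 2$, note that translating $A$ and $B$ changes none of $|A|$, $|B|$, $|A+B|$, so I may assume $0 \in B$; pick any $b \in B \setminus \{0\}$. Here primality enters: since $\gcd(b,p)=1$, the multiples $0, b, 2b, \dots$ run through all of $\ZZ_p$, and since $0 \in A$ while $A \ne \ZZ_p$ (because $|A| \le p - |B| \le p-2$), there is an integer $j$ with $jb \in A$ and $(j{+}1)b \notin A$; replacing $A$ by the translate $A - jb$ we arrange $0 \in A$ and $b \notin A$, while still $0, b \in B$.

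Now apply the transform $A' = A \cup B$ and $B' = A \cap B$. Then $B'$ is nonempty since $0 \in A \cap B$; we have $|A'| + |B'| = |A \cup B| + |A \cap B| = |A| + |B| \le p$; we have $|B'| < |B|$ because $b \in B \setminus A$ forces $b \notin B'$; and $A' + B' \subseteq A+B$, since for $x \in A \cup B$ and $y \in A \cap B$ one gets $x+y \in A+B$ whether $x \in A$ (then $y \in B$) or $x \in B$ (then $y \in A$ and $B+A = A+B$). Applying the induction hypothesis to $(A',B')$, whose data satisfy exactly the same constraints, gives $|A+B| \ge |A'+B'| \ge |A'|+|B'|-1 = |A|+|B|-1$, completing the induction.

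The one real subtlety --- the main obstacle --- is engineering a transform that strictly decreases $|B|$ without increasing $|A|+|B|$ or emptying either set; the translation trick in the second paragraph, which is precisely where primality of $p$ is used, is what makes this possible, and the statement genuinely fails for composite moduli (for instance $A = B = \{0, n/2\}$ in $\ZZ_n$ gives $|A+B| = 2 < 3$). For completeness I would remark on the alternative, even shorter, proof via the Combinatorial Nullstellensatz: assuming for contradiction that $d := |A|+|B|-2 \ge |A+B|$ with $d \le p-1$, choose $C \supseteq A+B$ with $|C| = d$ and set $f(x,y) = \prod_{c \in C}(x+y-c)$; the coefficient of $x^{|A|-1}y^{|B|-1}$ in $f$ equals $\binom{d}{|A|-1} \not\equiv 0 \pmod p$, so the Nullstellensatz produces $a \in A$ and $b \in B$ with $f(a,b) \ne 0$, contradicting $a+b \in A+B \subseteq C$.
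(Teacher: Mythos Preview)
Your argument is the standard Davenport e-transform proof and is correct. The paper, however, does not prove this theorem at all: it is quoted in the overview section as a classical background result (attributed to Cauchy and Davenport) and no proof is supplied, so there is no ``paper's own proof'' to compare against.

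One minor slip worth fixing: in the inductive step you write ``since $0 \in A$,'' but you only arranged $0 \in B$, not $0 \in A$. This is inconsequential --- since $A$ is nonempty and (by $|A| \le p - |B| \le p-2$) proper, and since $\{jb : 0 \le j \le p-1\} = \ZZ_p$, the orbit must both hit and miss $A$, so some consecutive pair $jb \in A$, $(j{+}1)b \notin A$ exists regardless. Alternatively, just translate $A$ as well so that $0 \in A$ before picking $b$.
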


This theorem was later refined by Vosper who found the structure of all
$A,B \subseteq \ZZ_p$ ($p$ prime) for which $|A+B| < |A| + |B|$.  Before stating
his theorem, note that in any finite group ${\mathcal G}$ we must have $AB = {\mathcal G}$
whenever $|A| + |B| > |{\mathcal G}|$ by the following pigeon hole argument:
$\{ a^{-1}g : a \in A \} \cap B \neq \emptyset$ for every $g \in {\mathcal G}$.  For simplicity, we
have excluded this uninteresting case below.

\begin{theorem}[Vosper \cite{Vos}]
Let $p$ be a prime and let $A,B \subseteq \ZZ_p$ be nonempty.
If\/ $|A+B| < |A| + |B| \le p$, then one of the following holds:

\begin{enumerate}[label={\rm (\roman{*}) \ }, ref={\rm (\arabic{*})}]
\item
$|A| = 1$ or\/ $|B| = 1$.
\item
There exists $g \in \ZZ_p$ so that $B = \{g - a : a \in \ZZ_p \setminus A \}$,
and $A+B = \ZZ_p \setminus \{g\}$.
\item
$A$ and $B$ are arithmetic progressions with a common difference.
\end{enumerate}
\end{theorem}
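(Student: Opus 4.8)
The first move is to note, via the Cauchy--Davenport theorem, that the hypothesis $|A+B|<|A|+|B|\le p$ in fact forces $|A+B|=|A|+|B|-1$, so that we are classifying the \emph{critical pairs}, and then to dispose of the two degenerate conclusions before the main one. If $\min\{|A|,|B|\}=1$ we are in case~(i). So assume $|A|,|B|\ge 2$; since $|A+B|=|A|+|B|-1\le p-1$, the sumset is a proper subset of $\ZZ_p$, and we fix $g\notin A+B$. If $|A+B|=p-1$, then $|A|+|B|=p$, so $A+B=\ZZ_p\setminus\{g\}$ (equal cardinalities), and from $a+b\ne g$ for all $a\in A$, $b\in B$ the set $\{g-a:a\in A\}$ is a subset of $\ZZ_p\setminus B$ of size $|A|=p-|B|$, hence equals it --- precisely conclusion~(ii). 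Henceforth $|A|,|B|\ge 2$ and $|A+B|\le p-2$, the target is conclusion~(iii), and we record that translating $A$ and $B$ independently changes none of the hypotheses or conclusions.

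I would prove (iii) by induction on $m:=\min\{|A|,|B|\}$, which by symmetry we may take to be $|B|$. For the base case $m=2$, translate so that $B=\{0,d\}$ with $d\ne 0$; then $A+B=A\cup(A+d)$ has size $|A|+1$, whence $|A\setminus(A+d)|=1$. Since $p$ is prime, $0,d,2d,\dots,(p-1)d$ runs through $\ZZ_p$ as a single cycle, and $|A\setminus(A+d)|$ counts the maximal runs of consecutive elements of $A$ around that cycle; so $A$ is one run, i.e.\ an arithmetic progression with common difference $d$ (proper, as $|A|\le p-3$), and $B$ trivially is one too.

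For the inductive step, $|B|\ge 3$, the main engine is the Dyson $e$-transform: for $e\in\ZZ_p$ set $A_e:=A\cup(B+e)$ and $B_e:=(A-e)\cap B$, which satisfy $|A_e|+|B_e|=|A|+|B|$, $A_e+B_e\subseteq A+B$, and $B_e\ne\emptyset$ iff $e\in A-B$; so when $B_e\ne\emptyset$ the pair $(A_e,B_e)$ is again critical, with $|A_e+B_e|\le|A+B|\le p-2$. If we can choose $e$ with $2\le|B_e|\le|B|-1$, then $|A_e|=|A|+|B|-|B_e|\ge|A|+1\ge 2$, the induction hypothesis applies to $(A_e,B_e)$, its outcome cannot be case~(ii) (the sumset is too small), so $A_e$ and $B_e$ are progressions with a common difference $d$; since $A\subseteq A_e$ and $B\subseteq A_e-e$ both lie inside $d$-arcs, we then unroll those arcs to integer intervals and invoke the elementary integer analogue of the theorem (equality in $|X+Y|\ge|X|+|Y|-1$ for finite $X,Y\subseteq\ZZ$ forces $X,Y$ to be progressions with a common difference) to conclude that $A$ and $B$ are progressions with a common difference. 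A useful auxiliary reduction, especially when $|A|+|B|$ is large, is complementation: $C:=\ZZ_p\setminus(A+B)$ has $|C|=p-|A|-|B|+1\ge 2$, the pair $(C,-A)$ is critical with $B=\ZZ_p\setminus(C-A)$, and once $C$ and $A$ are progressions with a common difference so is $B$ (the complement of a cyclic interval is a cyclic interval).

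The genuine obstacle is the combinatorial core of the inductive step: guaranteeing that a \emph{useful} reduction always exists. One has to exclude the configuration in which $|B_e|\in\{1,|B|\}$ for every $e\in A-B$ (no $e$-transform lowers $|B|$ without collapsing a part to a point) and, in the regime where complementation is needed, secure $|C|<|B|$; this is done by weighing $\sum_e|B_e|=|A||B|$ against $|A-B|\ge|A|+|B|-1$, together with the fact that $\ZZ_p$ has no proper nontrivial subgroup --- so the only set fixed by a nonzero translation is $\ZZ_p$ itself --- which is exactly where primality enters, with the few surviving configurations finished directly or by the symmetry of the statement. A second technical point, controlling wraparound when passing from $\ZZ_p$ to $\ZZ$ in the lifting step, is what dictates the split between the Dyson route (when $|A|+|B|$ is small) and the complementation route (when it is large).
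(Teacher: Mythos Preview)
The paper does not give its own proof of Vosper's theorem; it is quoted in the overview as a known background result, with attribution to \cite{Vos}, and is used only to contextualize the paper's contributions. So there is no proof in the paper to compare your proposal against.

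That said, your outline follows a standard route (Cauchy--Davenport to pin down $|A+B|=|A|+|B|-1$, disposal of the degenerate cases, then the Dyson $e$-transform with complementation as a companion device). The base case $m=2$ and the handling of conclusion~(ii) are clean. The genuine work, as you yourself flag, is in guaranteeing a \emph{useful} transform always exists in the inductive step, and your treatment there is a description of what must be shown rather than a proof: the sentence ``with the few surviving configurations finished directly or by the symmetry of the statement'' is precisely where the argument has to be carried out, and the counting $\sum_e |B_e| = |A||B|$ against $|A-B|\ge |A|+|B|-1$ does not by itself rule out the bad configuration $|B_e|\in\{1,|B|\}$ for all $e$. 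Likewise the lifting step from $\ZZ_p$ to $\ZZ$ needs an explicit bound to prevent wraparound, not just an acknowledgement that one is needed. As a proof \emph{proposal} this is a faithful map of the terrain; as a proof it is incomplete at exactly the point where the difficulty concentrates.
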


Analogues of the Cauchy-Davenport theorem and Vosper's theorem for abelian and general groups were found by Kneser \cite{Kn}, Kempermann \cite{Kemperman}, and recently by
DeVos \cite{DeVos}, and DeVos, Goddyn and Mohar \cite{DGM}.

In abelian groups, we have powerful theorems
which yield rough structural information when a finite subset $A \subseteq {\mathcal G}$ satisfies $|A+A| \le c |A|$ for a fixed constant $c$.
Freiman \cite{Freiman} proved such a theorem when ${\mathcal G} = \ZZ$ and this has
recently been extended to all abelian groups by Green and Ruzsa \cite{GreenRuzsa}.
Despite this progress, there is still relatively little known in terms of rough structure of sets with small product in general groups.
The following corollary of our main theorem is a small step in this direction.

\begin{theorem}
\label{thm:3}
Let\/ ${\mathcal G}$ be an infinite group, let $B=B^{-1} \subseteq {\mathcal G}$ be a finite generating set containing the identity element of ${\mathcal G}$, and let
$A \subseteq {\mathcal G}$ be a finite subset of ${\mathcal G}$.
If $|BA| < |A| + \tfrac{1}{2}|A|^{\frac{1}{3}}$, then ${\mathcal G}$ has a finite normal subgroup $N$ so that ${\mathcal G}/N$ is either cyclic or dihedral.
Furthermore, $|N|< \frac{1}{4} |A|^{1/3}$.
\end{theorem}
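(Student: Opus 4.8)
The plan is to apply Theorem~\ref{main_thm} to a Cayley graph of ${\mathcal G}$ and then read off the algebraic structure. Let $G$ be the Cayley graph of ${\mathcal G}$ with connection set $B$ (loops deleted, which is harmless since $e \in B$). As $B = B^{-1}$ and $B$ generates ${\mathcal G}$, the graph $G$ is connected, locally finite and vertex transitive, and right multiplication by the elements of ${\mathcal G}$ is a regular action of ${\mathcal G}$ on $V(G) = {\mathcal G}$ by automorphisms. For any finite $A \subseteq {\mathcal G}$, the set of vertices outside $A$ having a neighbour in $A$ is precisely $BA \setminus A$, so our hypothesis says this set has size $k := |BA| - |A| < \tfrac12|A|^{1/3}$. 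If $A = \emptyset$ then $|BA| < |A| + \tfrac12|A|^{1/3}$ fails, so we may assume $A \ne \emptyset$; and if $k = 0$ then $A$ is a union of connected components of $G$ and hence empty (as $G$ is connected and $A \ne V(G)$), contradicting $A \ne \emptyset$, so $k \ge 1$. Since ${\mathcal G}$, and hence $G$, is infinite, both $|A| \le \tfrac12|V(G)|$ and $\diam(G) = \infty \ge 31(k+1)^2$ hold trivially, so Theorem~\ref{main_thm} applies to $A$.

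First I would eliminate the bounded-size alternative. If $|A| \le 2k^3 + k^2$, then using $k < \tfrac12|A|^{1/3}$ we obtain $2k^3 + k^2 < \tfrac14|A| + \tfrac14|A|^{2/3}$, hence $\tfrac34|A| < \tfrac14|A|^{2/3}$, i.e. $3|A|^{1/3} < 1$, which forces $|A| = 0$ and contradicts $A \ne \emptyset$. So the second alternative of Theorem~\ref{main_thm} holds: $G$ carries a ring-like structure with bounded parameters and $A$ is efficiently contained in an interval of it. Because $G$ is infinite while the fibres of a ring-like structure are bounded in size, the underlying ring is (bi-)infinite rather than a finite cycle.

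It remains to translate this into group language. The fibre partition associated with the ring-like structure is canonical and therefore $\mathrm{Aut}(G)$-invariant, so right multiplication by the elements of ${\mathcal G}$ permutes the fibres; this yields a homomorphism $\varphi$ from ${\mathcal G}$ into the automorphism group of the underlying infinite ring, which is a subgroup of the infinite dihedral group $D_\infty$. Its image $\varphi({\mathcal G})$ is transitive on the fibres, hence contains a translation, and an infinite subgroup of $D_\infty$ is isomorphic to $\ZZ$ or to $D_\infty$; so, putting $N := \ker\varphi$, the quotient ${\mathcal G}/N \cong \varphi({\mathcal G})$ is infinite cyclic or infinite dihedral, and $N$ is normal in ${\mathcal G}$. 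Since ${\mathcal G}$ acts freely on itself, any $g \in N$ fixes the fibre $F_0$ through the identity setwise, so $g = eg \in F_0$; hence $N \subseteq F_0$ and $|N|$ is at most the (bounded) fibre size. Finally, matching $k$ against the parameters of the ring-like structure provided by Theorem~\ref{main_thm} --- the separator $BA \setminus A$ must contain, roughly, a full fibre at each end of the interval, so $k \ge 2|N|$ --- yields $|N| \le k/2 < \tfrac14|A|^{1/3}$, as claimed.

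The main obstacle is this last paragraph: extracting an honestly normal subgroup from the ring-like structure, verifying that the quotient is exactly cyclic or dihedral rather than merely "line-like", and pinning down the bookkeeping that turns "bounded parameters" into the sharp bound $|N| < \tfrac14|A|^{1/3}$. All of the substantial combinatorics is already packaged inside Theorem~\ref{main_thm}; what remains is a careful but essentially routine passage from graphs to groups.
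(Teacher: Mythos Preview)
Your approach is essentially the paper's: apply Theorem~\ref{main_thm} to the Cayley graph, eliminate outcome (i) by playing $|A| \le 2k^3+k^2$ against $k < \tfrac12|A|^{1/3}$, and in outcome (ii) take $N$ to be the kernel of the action of ${\mathcal G}$ on the blocks of the cyclic system. Two points need correction.

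First, and most importantly, you have not checked the hypothesis of Theorem~\ref{main_thm} that $G[A \cup \partial A]$ is connected; there is no reason this should hold for an arbitrary finite $A \subseteq {\mathcal G}$, and without it the theorem does not apply. The paper handles this by partitioning $A$ into pieces $A_1,\dots,A_l$ so that the sets $A_i \cup \partial A_i$ are the vertex sets of the connected components of $G[A \cup \partial A]$, with $k_i = |\partial A_i|$ summing to $k$. If any $A_i$ lands in case (ii) the conclusion follows as before; otherwise each lies in case (i), giving $k_i \ge \tfrac12|A_i|^{1/3}$, and then subadditivity of the cube root yields
\[
k = \sum_{i=1}^l k_i \ge \tfrac12\sum_{i=1}^l |A_i|^{1/3} \ge \tfrac12\,|A|^{1/3},
\]
contradicting the hypothesis. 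This step is short but genuinely missing from your argument.

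Second, your heuristic for the bound on $|N|$ (``the separator contains a full fibre at each end, so $k \ge 2|N|$'') is not how the bound arises: the set $\partial A$ need not be a union of blocks. The bound is already packaged in the statement of Theorem~\ref{main_thm}(ii), which gives $st \le k/2$ for the block size $s$ and jump parameter $t$; since $G$ is connected and infinite with finite blocks we must have $t \ge 1$, whence $|N| \le s \le st \le k/2 < \tfrac14|A|^{1/3}$. Your argument that $N$ is contained in the block through the identity, hence $|N| \le s$, is correct and matches the paper's.
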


The proof is given towards the end of the paper in Section~\ref{sect:proofs}.
Our result also applies to finite groups, but for this it requires an assumption which is more natural in the context of graphs and will be discussed in the sequel.

\bigskip

\subsection*{Local Expansion in Graphs}

Before we begin our discussion of expansion in graphs, we will need to introduce some notation.  If $G$ is a graph and $X \subseteq V(G)$, we let
$\delta X = \{ uv \in E(G) : \mbox{$u \in X$ and $v \not\in X$} \}$ and we call any
set of edges of this form an \DEF{edge-cut}.  We let
$\partial X = \{ v \in V(G) \setminus X : \mbox{$uv \in E(G)$ for some $u \in X$} \}$
and we call $\partial X$ the \DEF{boundary} of $X$.
Similarly, if $\vec{G}$ is a directed graph and
$X \subseteq V(\vec{G})$ we let
$\delta^+(X) = \{ (u,v) \in E(\vec{G}) : \mbox{$u \in X$ and $v \not\in X$} \}$ and
$\delta^-(X) = \delta^+( V(G) \setminus X)$, and we let
$\partial^+(X) = \{ v \in V(G) \setminus X : \mbox{$(u,v) \in E(G)$ for some $u \in X$} \}$ and
$\partial^-(X) = \{ v \in V(G) \setminus X : \mbox{$(v,u) \in E(G)$ for some $u \in X$} \}$.
\DEF{Expansion in graphs} is the study of the behavior of the cardinalities of
$\delta X$ and $\partial X$.  We will be particularly interested in the case when these parameters are small.  Next we introduce the types of graphs we will be most interested in.

Again we let ${\mathcal G}$ denote a multiplicative group.  For every $A \subseteq {\mathcal G}$, we define the Cayley digraph ${\mathit Cay}({\mathcal G},A)$ to be
the directed graph (without multiple edges) with vertex-set ${\mathcal G}$ and $(x,y)$ an arc if
$y \in Ax$.  Using this definition, the group ${\mathcal G}$ has a natural (right) transitive action on $V(G)$ which preserves incidence.   If $1 \in A$, and $B \subseteq {\mathcal G}$, then the product set $AB$ is the disjoint union of $B$
and $\partial^+(B)$.  This observation allows us to rephrase problems about small product sets in groups as problems concerning sets with small boundary in Cayley digraphs.

Next we overview known bounds on the boundary of finite sets in vertex transitive graphs.  These theorems are usually stated only for finite graphs, but more general
versions stated below follow from the same arguments.



\begin{theorem}[Mader \cite{Mad71a}]
\label{edge-con_thm}
If $G$ is a connected $d$-regular vertex transitive graph and\/ $\emptyset \neq A \subset V(G)$
is finite, then $|\delta A| \ge d$.
\end{theorem}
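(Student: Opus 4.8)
The plan is to prove the equivalent statement that the \emph{edge-connectivity}
\[ \lambda(G) := \min\{\, |\delta X| : X \subseteq V(G),\ X \text{ finite},\ \emptyset \ne X \ne V(G)\,\} \]
is at least $d$; this gives the theorem, since $|\delta A| \ge \lambda(G)$ for every set $A$ as in the statement. (The reverse bound $\lambda(G) \le d$ is immediate by taking $X$ to be a single vertex.) Since $G$ is connected, $\lambda(G) \ge 1$, and this minimum is attained. Call a set $X$ as above with $|\delta X| = \lambda(G)$ a \DEF{fragment}, and a fragment of smallest cardinality an \DEF{atom}; an atom exists, and it suffices to show that an atom $A$ satisfies $|\delta A| \ge d$. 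The essential tool is submodularity of the edge boundary: for all finite $X, Y \subseteq V(G)$,
\[ |\delta(X \cap Y)| + |\delta(X \cup Y)| \le |\delta X| + |\delta Y|, \]
a routine edge count, valid here because $G$ is locally finite, so every boundary in sight is finite.

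First I would prove that two distinct atoms $A, A'$ are disjoint. Suppose $A \cap A' \ne \emptyset$. If $A \cup A' \ne V(G)$, then $A \cap A'$ and $A \cup A'$ are both admissible sets, hence each has boundary of size at least $\lambda(G)$, and submodularity forces $|\delta(A \cap A')| = \lambda(G)$; thus $A \cap A'$ is a fragment, and since $|A \cap A'| \le |A|$ and $A$ is an atom, we get $A \cap A' = A$, hence $A = A'$ — a contradiction. If instead $A \cup A' = V(G)$, then $V(G) \setminus A' \subseteq A$ is a fragment (having the same boundary as $A'$), so it has size at least $|A|$, hence equals $A$; then $A' = V(G) \setminus A$ and $A \cap A' = \emptyset$ — again a contradiction. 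So atoms are pairwise disjoint. Since every automorphism of $G$ maps atoms to atoms and $G$ is vertex-transitive, the images of a fixed atom $A$ under $\mathrm{Aut}(G)$ cover $V(G)$ and, being pairwise disjoint or equal, partition $V(G)$ into atoms, each of size $a := |A|$. In particular the setwise stabilizer of $A$ is transitive on $A$: an automorphism taking $u \in A$ to $v \in A$ carries the atom $A$ to the atom containing $v$, which is again $A$. Hence $G[A]$ is vertex-transitive, so every vertex of $A$ has the same number, say $d - r$ with $0 \le r \le d$, of neighbours in $A$.

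Consequently every vertex of $A$ is incident with exactly $r$ edges of $\delta A$, so $|\delta A| = a r$, while $G[A]$ is a simple $(d-r)$-regular graph on $a$ vertices and hence $a \ge d - r + 1$. Moreover $r \ge 1$, since $r = 0$ would make $\delta A$ empty, contradicting connectedness of $G$ (as $A$ and $V(G) \setminus A$ are both nonempty). Therefore
\[ \lambda(G) = |\delta A| = a r \ge r\,(d - r + 1), \]
and on the integers $1 \le r \le d$ the right-hand side is a concave function of $r$ taking the value $d$ at both endpoints $r = 1$ and $r = d$, so it is at least $d$ throughout. Thus $\lambda(G) \ge d$, which completes the proof.

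The step I expect to be most delicate is the disjointness of atoms: one has to phrase the submodular inequality correctly in the (possibly infinite) locally finite setting and dispose of the borderline subcase $A \cup A' = V(G)$, which lies outside the reach of submodularity and needs instead the observation that a fragment and its complement have the same boundary. Once the atom partition is in hand, the remaining degree count is routine.
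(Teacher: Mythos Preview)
The paper does not actually prove this theorem; it is quoted as background with a citation to Mader, and the only hint given is the remark in Section~2 that the uncrossing argument ``was the main tool used to prove Theorems \ref{edge-con_thm}, \ref{vertex-con_thm}, and \ref{digraph-con_thm}.'' Your proof is correct and is precisely the classical atom argument: submodularity of $|\delta(\cdot)|$ (the edge analogue of the paper's Uncrossing Lemma) forces distinct atoms to be disjoint, vertex-transitivity then makes the atoms a block system with the stabilizer of an atom acting transitively on it, and the resulting regularity of $G[A]$ yields $\lambda(G)=ar\ge r(d-r+1)\ge d$. You handle the infinite case properly by working only with finite $X$ (so all boundaries are finite and the minimum is attained), and your treatment of the borderline $A\cup A'=V(G)$ is clean. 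There is nothing to compare against in the paper beyond the fact that your submodular step is exactly the edge-cut version of their uncrossing.
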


\begin{theorem}[Mader \cite{Mad71b}, Watkins \cite{Wat}]
\label{vertex-con_thm}
If $G$ is a connected $d$-regular vertex transitive graph and $\emptyset \neq A \subset V(G)$ is finite and satisfies $A \cup \partial A \neq V(G)$, then $|\partial A| \ge \frac{2}{3}(d+1)$.
\end{theorem}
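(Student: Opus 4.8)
\medskip

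The plan is to deduce the statement from a lower bound on the vertex connectivity and then invoke the classical theory of atoms. Since $A\cup\partial A\neq V(G)$, the set $\partial A$ is separating, so $|\partial A|\ge\kappa$, where $\kappa=\kappa(G)$ denotes the minimum of $|\partial X|$ over all finite nonempty sets $X\subsetneq V(G)$ with $X\cup\partial X\neq V(G)$; it therefore suffices to show $\kappa\ge\tfrac23(d+1)$, and we may assume $G$ is not complete. The first ingredient is the submodular inequality
\begin{equation*}
|\partial(X\cap Y)|+|\partial(X\cup Y)|\le|\partial X|+|\partial Y|,
\end{equation*}
valid for all finite $X,Y\subseteq V(G)$ and proved by weighing the contribution of each vertex to both sides; finiteness of the sets involved — always available here, which is exactly why the argument survives for infinite $G$ — is all that is used.

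Next I would set up atoms. Call a nonempty finite set $X\subsetneq V(G)$ with $X\cup\partial X\neq V(G)$ and $|\partial X|=\kappa$ a \emph{fragment}, and a fragment of least cardinality an \emph{atom}; fix an atom $A$, and write $a=|A|$, $S=\partial A$, $B=V(G)\setminus(A\cup S)$. One first checks $\partial B=S$ (otherwise $\partial B\subsetneq S$ would be a smaller separator), so that $B$ is a fragment and $a\le|B|$. The submodular inequality forces distinct atoms to be disjoint: if $A_1\cap A_2$ were nonempty and properly contained in $A_1$, then $(A_1\cap A_2)\cup\partial(A_1\cap A_2)\subseteq A_1\cup\partial A_1\neq V(G)$, so $|\partial(A_1\cap A_2)|\ge\kappa$, making $A_1\cap A_2$ a fragment smaller than an atom. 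By vertex transitivity every vertex lies in a translate of $A$, so the atoms partition $V(G)$ and form a single $\mathrm{Aut}(G)$-orbit; hence the setwise stabiliser of $A$ is transitive on $A$, so $G[A]$ is vertex transitive, thus $d'$-regular for some $0\le d'\le d-1$, with $a\ge d'+1$. A further appeal to minimality shows $G[A]$ is \emph{connected}: otherwise $A$ would split into two strictly smaller fragments.

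The decisive step is an analysis of the edges running among $A$, $S$ and $B$, which I expect to produce two inequalities. The first is easy: each vertex of $A$ has exactly $d-d'$ neighbours, all in $S$, so $d-d'\le|S|=\kappa$. The second, and the heart of the matter, is $\kappa\ge 2(d'+1)$. Here the basic observation is that for $T\subseteq S$ one has $\partial(A\cup T)=(S\setminus T)\cup(N(T)\cap B)$, so $|N(T)\cap B|\ge|T|$ whenever $A\cup T$ is still a proper separating configuration; this caps how densely $S$ can attach to $A$, and combining it with the edge count $\sum_{s\in S}|N(s)\cap A|=a(d-d')$, the bound $a\le|B|$, the connectivity of $G[A]$, and — crucially — vertex transitivity applied to a translate of $A$ meeting $S$, one is forced into $\kappa\ge 2(d'+1)$. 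Granting the two inequalities, $d-\kappa\le d'\le\tfrac{\kappa}{2}-1$, so $\tfrac32\kappa\ge d+1$, i.e. $\kappa\ge\tfrac23(d+1)$. The graph $C_n[K_2]$ (degree $5$, atom a $K_2$, $d'=1$, $\kappa=4$) shows the bound is sharp, so the edge count must be carried out without slack; I expect the inequality $\kappa\ge 2(d'+1)$ to be the main obstacle, since the obvious estimates (such as ``a vertex of $S$ has at most $d-1$ neighbours in $A$'') lose precisely the factor $2$, and recovering it requires exploiting the full extremal rigidity of an atom through a careful case analysis.
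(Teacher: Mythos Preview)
The paper does not prove this theorem; it is quoted from Mader and Watkins as background, so there is no ``paper's own proof'' to compare against. That said, your framework---fragments, atoms, submodularity, and the observation that atoms form blocks of imprimitivity so that $G[A]$ is $d'$-regular---is exactly the classical Mader--Watkins approach, and the arithmetic at the end is correct: from $d-d'\le\kappa$ and $d'\le\tfrac{\kappa}{2}-1$ one does get $\kappa\ge\tfrac{2}{3}(d+1)$.

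The genuine gap is the second inequality. You state $\kappa\ge 2(d'+1)$, list several ingredients (the Hall-type bound $|N(T)\cap B|\ge|T|$, the edge count $a(d-d')$, the bound $a\le|B|$, a translate of $A$ meeting $S$), and assert that ``one is forced into'' the conclusion---but you never actually combine them, and you yourself flag this step as ``the main obstacle'' requiring ``a careful case analysis''. As written, this is an outline of hopes, not a proof. The ingredients you name do not obviously assemble into the desired bound; in particular, the Hall condition and the raw edge count only recover $a\le\kappa$, which yields $\kappa\ge\tfrac12(d+1)$, losing the factor you are trying to save.

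What the classical argument actually proves is the (stronger, more structural) inequality $\kappa\ge 2|A|$. One takes an atom $A'$ meeting $S$; since atoms are disjoint, $A'\subseteq S\cup B$. If $A'\subseteq S$, one shows $S$ cannot consist of a single atom (because every vertex of $S$ has neighbours in both $A$ and $B$, forcing $|\partial A'|>|A|=\kappa$ if $S=A'$), so $S$ contains at least two atoms and $\kappa\ge 2a$. If instead $A'$ straddles $S$ and $B$, one uncrosses $A$ against the complementary fragment of $A'$ and uses the minimality of $|A|$ to reach a contradiction. Either way $\kappa\ge 2a\ge 2(d'+1)$. Filling in this case analysis is precisely the work your proposal omits.
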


\begin{theorem}[Hamidoune \cite{Ham}]
\label{digraph-con_thm}
If $G$ is a connected vertex transitive directed graph with outdegree $d$ and
$\emptyset \neq A \subset V(G)$ is finite and satisfies $A \cup \partial^+(A) \neq V(G)$,
then $|\partial^+(A)| \ge \frac{d+1}{2}$.
\end{theorem}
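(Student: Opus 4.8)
We want to show: if $G$ is a connected vertex transitive directed graph with outdegree $d$, and $\emptyset \neq A \subset V(G)$ is finite with $A \cup \partial^+(A) \neq V(G)$, then $|\partial^+(A)| \ge \frac{d+1}{2}$.

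The plan is to use Hamidoune's method of the \emph{atom}. Let $\kappa$ denote the minimum of $|\partial^+(X)|$ over all finite $X$ with $X \neq \emptyset$ and $X \cup \partial^+(X) \neq V(G)$; call such an $X$ a \emph{fragment}, and call a fragment of minimum cardinality an \emph{atom}. Our goal is to show $\kappa \ge \frac{d+1}{2}$. First I would record two elementary facts. (i) Since $G$ has outdegree $d$, any single vertex $v$ has $|\partial^+(\{v\})| \le d$, and since $G$ is connected (hence $\partial^+(\{v\}) \neq \emptyset$), we get $1 \le \kappa \le d$; in particular fragments exist. (ii) Vertex transitivity means the automorphism group acts transitively, so for every $g$ in the automorphism group $\Gamma$ and every fragment $X$, the set $gX$ is again a fragment with $|\partial^+(gX)| = |\partial^+(X)|$; thus the images of an atom under $\Gamma$ are all atoms.

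The heart of the argument is the following \emph{intersection property of atoms}: if $M$ is an atom and $M'$ is any fragment with $M \cap M' \neq \emptyset$, then $M \subseteq M' \cup \partial^+(M')$. To prove this, I would use submodularity of the out-boundary-type function. Set $X = M \cap M'$ and $Y = M \cup M'$. A standard counting argument (tracking arcs leaving each region) gives the submodular-style inequality $|\partial^+(X)| + |\partial^+(Y)| \le |\partial^+(M)| + |\partial^+(M')|$, provided $X$ and $Y$ are themselves fragments — i.e.\ provided $X \neq \emptyset$ (which holds by hypothesis) and $Y \cup \partial^+(Y) \neq V(G)$. If $Y$ fails to be a fragment, then $Y \cup \partial^+(Y) = V(G)$, and one shows directly that $M \subseteq M' \cup \partial^+(M')$ as desired (the complement of $M' \cup \partial^+(M')$ lies outside $Y$, hence is empty intersected with $M$). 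If $Y$ is a fragment, then since $X \neq \emptyset$ is a fragment we have $|\partial^+(X)| \ge \kappa = |\partial^+(M)|$, so the inequality forces $|\partial^+(Y)| \le |\partial^+(M')|$; but also $|X| \le |M| = $ (atom size), and $X$ being a fragment means $|X| = |M|$, so $X$ is an atom contained in $M$, whence $X = M$, i.e.\ $M \subseteq M'$ and again $M \subseteq M' \cup \partial^+(M')$.

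With the intersection property in hand, finish as follows. Let $M$ be an atom. For any $g \in \Gamma$, $gM$ is also an atom; if $M \cap gM \neq \emptyset$ then by the intersection property (applied in both directions, using that atoms are in particular fragments) one deduces $M = gM$. Hence the distinct $\Gamma$-translates of $M$ are pairwise disjoint. Now pick a vertex $v \in \partial^+(M)$ and a vertex $u \in M$ with $(u,v)$ an arc. For any $w \in \partial^+(M)$, since $\Gamma$ acts transitively there is $g$ with $gu = $ (some preimage structure); more precisely, one uses transitivity to move $M$ around and a pigeonhole/orbit-counting argument on the arcs out of $M$ versus arcs into $\partial^+(M)$, combined with the disjointness of translates, to conclude $|M| \le |\partial^+(M)|$ — this is the key inequality $|M| \le \kappa$. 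Symmetrically, working with the reversed digraph $\vec{G}^{\,{\rm op}}$ (also vertex transitive, with indegree $d$) and its atom $M^-$, one gets $|M^-| \le \kappa^- = \kappa$ (the out-connectivity of $\vec G$ equals the in-connectivity of $\vec G^{\,{\rm op}}$, and a short argument shows the negative atom has the same size bound). Finally, a fragment $X$ together with $\partial^+(X)$ and the "far side" $V \setminus (X \cup \partial^+(X))$ partitions $V$, and counting arcs from $X$ into $\partial^+(X)$ from one side and from $\partial^+(X)$ onward shows that $\partial^+(M)$ cannot be too small relative to both $|M|$ and $|M^-|$: concretely $d \le |M| + |\partial^+(M)| - 1 \le 2\kappa - 1$, giving $\kappa \ge \frac{d+1}{2}$, as required. \eopf

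\medskip

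The step I expect to be the main obstacle is establishing the key inequality $|M| \le \kappa$ for the atom (and the parallel bound for the negative atom): this is where one must combine transitivity, the disjointness of atom-translates, and careful arc-counting, and it is easy to be sloppy about which digraph (original or reversed) one is working in and about the role of the assumption $A \cup \partial^+(A) \neq V(G)$. The submodularity inequality and the case analysis for when $X$ or $Y$ fails to be a fragment are routine but must be done carefully for directed graphs, since the naive "number of boundary vertices" function is not literally submodular — one needs the version that accounts for the three-way partition induced by each fragment.
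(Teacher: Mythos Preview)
The paper does not prove this theorem; it is quoted as a known result of Hamidoune and only \emph{used} (in deriving Corollary~\ref{cor:1.10}). So there is no in-paper proof to compare against. Your outline follows Hamidoune's original atom method, which is the standard argument, but as written it has real gaps beyond the one you flag.

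First, your ``intersection property'' is both misstated and mis-argued. What Hamidoune actually proves is the stronger conclusion $M\subseteq M'$ (not merely $M\subseteq M'\cup\partial^+(M')$), and this stronger form is what yields ``distinct atoms are disjoint''. In the case where $Y=M\cup M'$ is admissible your submodularity argument in fact gives $M\cap M'$ is a fragment of size $\le |M|$, hence $M\cap M'=M$, i.e.\ $M\subseteq M'$ --- you derive this and then inexplicably weaken it. In the case where $Y$ is not admissible, your sentence ``the complement of $M'\cup\partial^+(M')$ lies outside $Y$'' is circular: that containment is equivalent to $M\subseteq M'\cup\partial^+(M')$, which is exactly the claim. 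The correct treatment of this degenerate case uses the complementary (negative) fragments $V\setminus(M\cup\partial^+(M))$ and $V\setminus(M'\cup\partial^+(M'))$ and a second application of submodularity, and it also needs the observation $\kappa^+=\kappa^-$ (which you never establish).

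Second, the step you correctly identify as the crux, $|M|\le\kappa$, is not just ``careful arc-counting'': it rests on showing that the atom through any vertex of $\partial^+(M)$ is entirely contained in $\partial^+(M)$. That in turn requires the full (stronger) intersection property applied to the atom and the \emph{negative} fragment $V\setminus(M\cup\partial^+(M))$, which is why the positive/negative interplay matters and why the weakened version of the intersection property does not suffice. Once $|M|\le\kappa$ is in hand, your final line $d\le |M|+|\partial^+(M)|-1\le 2\kappa-1$ is correct and finishes the proof; the appeal to the reversed digraph and a separate negative-atom size bound is unnecessary at that point.
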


For the next result, recall that a partition $\sigma$ (whose parts are called
\DEF{blocks}) of vertices of a vertex transitive graph $G$ is said to be
a \DEF{system of imprimitivity} if for every automorphism $\varphi$ of $G$
and every block $B\in \sigma$, the set $\varphi(B)$ is another block of $\sigma$.
In that case the blocks of $\sigma$ are also called \DEF{blocks of imprimitivity}.
Having a system of imprimitivity $\sigma$, we define the quotient graph
$G^\sigma$ whose vertices are the blocks of imprimitivity, and two such blocks
$B,B'$ are adjacent if there exist $u\in B$ and $u'\in B'$ that are adjacent in $G$.

The next theorem may be viewed as a refinement of Mader's theorem which gives a structural result for graphs which have small edge-cuts.
Let us recall that a vertex-set $B$ in a graph $G$ is called a \DEF{clique}
if every two vertices in $B$ are adjacent.

\begin{theorem}[Tindell \cite{Tin}]
Let $G$ be a finite connected $d$-regular vertex transitive graph.  If there exists
$X \subseteq V(G)$ with $|X|,|V(G) \setminus X| \ge 2$ so that\/ $|\delta X| = d$, then one of the following holds:

\begin{enumerate}[label={\rm (\roman{*}) \ }, ref={\rm (\arabic{*})}]
\item   There is a system of imprimitivity whose blocks are cliques of order~$d$.
\item   $d=2$ (so\/ $G$ is a cycle).
\end{enumerate}
\end{theorem}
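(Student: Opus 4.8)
The plan is to isolate a minimum-size nontrivial edge-cut and show that its orbit under $\mathrm{Aut}(G)$ is the desired block system. If $d=2$, then $G$, being a finite connected $2$-regular graph, is a cycle, and conclusion (ii) holds; so assume $d\ge 3$. Let $\mathcal{F}$ be the family of all $Z\subseteq V(G)$ with $|\delta Z|=d$ and $2\le |Z|\le |V(G)|-2$; by hypothesis $\mathcal{F}\ne\emptyset$, and since $\delta Z=\delta(V(G)\setminus Z)$ the family is closed under complementation, so a member $A\in\mathcal{F}$ of minimum cardinality satisfies $2\le |A|\le \tfrac12|V(G)|$ (in particular $|V(G)|\ge 4$). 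I will prove that the translates $\{\varphi(A):\varphi\in\mathrm{Aut}(G)\}$ form a partition of $V(G)$ into cliques of order $d$; this partition is then automatically a system of imprimitivity, giving conclusion (i).

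The core is an uncrossing argument. The workhorses are the submodular inequalities $|\delta(X\cap Y)|+|\delta(X\cup Y)|\le |\delta X|+|\delta Y|$ and, applying this to $X$ and $V(G)\setminus Y$, $|\delta(X\setminus Y)|+|\delta(Y\setminus X)|\le |\delta X|+|\delta Y|$, together with Mader's Theorem~\ref{edge-con_thm}, which gives $|\delta Z|\ge d$ for every nonempty proper $Z\subset V(G)$. Suppose $A'=\varphi(A)$ is a translate with $A'\ne A$ but $A\cap A'\ne\emptyset$; since $|A'|=|A|$ we have $A\setminus A'\ne\emptyset\ne A'\setminus A$. Each of $A\cap A'$, $A\setminus A'$, $A'\setminus A$ is nonempty, of size strictly less than $|A|$, and has complement containing $V(G)\setminus A$ hence of size $\ge 2$; so by minimality of $|A|$ none of them lies in $\mathcal{F}$, which means each has edge-boundary of size $\ge d+1$ unless it is a singleton. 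Feeding this into the second submodular inequality forces $|A\setminus A'|=|A'\setminus A|=1$; then $|A\cap A'|=|A|-1$ and $|A\cup A'|=|A|+1\le |V(G)|-1$, and the first inequality forces $|\delta(A\cap A')|=d$, which — as $|A\cap A'|=|A|-1$ with complement of size $\ge 2$ — contradicts minimality as soon as $|A|\ge 3$. Hence $|A|=2$; writing $A=\{p,q\}$, a direct degree count gives $|\delta A|=2d-2$ if $p\sim q$ and $|\delta A|=2d$ otherwise, neither of which equals $d$ when $d\ge 3$. This contradiction shows that distinct translates of $A$ are disjoint; by vertex-transitivity they cover $V(G)$, and since $\mathrm{Aut}(G)$ permutes translates, the resulting partition $\sigma$ is a system of imprimitivity with all blocks of size $|A|$.

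It remains to identify the blocks as cliques of order $d$. Every $\varphi\in\mathrm{Aut}(G)$ carries the block containing a vertex $u$ isomorphically onto the block containing $\varphi(u)$, so by transitivity every vertex has the same number $r$ of neighbours inside its own block. Counting, vertex by vertex, the edges leaving a block $B$ yields $|\delta B|=|B|\,(d-r)=d$, so $|B|$ divides $d$; writing $d=|B|t$ we get $r=t(|B|-1)$, and since a vertex has at most $|B|-1$ neighbours inside $B$ we need $t(|B|-1)\le |B|-1$, forcing $t=1$. Thus $|B|=d$ and $r=d-1$, so each block is a clique of order $d$, and the partition $\sigma$ witnesses conclusion (i).

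The delicate point, and the place the cycle exception is born, is the residual case of the uncrossing in which $A$ and $A'$ differ by swapping a single vertex: the first submodular inequality then only produces a cut of size $|A|-1$, which is fatal to minimality when $|A|\ge 3$ but harmless when $|A|=2$, so this last case must be dispatched by the direct count above, and it is precisely the configuration $|A|=2$, $d=2$ that survives it. The only other care required is the routine bookkeeping to check that every auxiliary set $A\cap A'$, $A\cup A'$, $A\setminus A'$, $A'\setminus A$ is nonempty and a proper subset of $V(G)$ before Mader's bound is applied — which is exactly where the inequalities $|A|\le\tfrac12|V(G)|$ and $|V(G)\setminus A|\ge 2$ are used.
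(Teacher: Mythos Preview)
The paper does not supply its own proof of Tindell's theorem; it is quoted in the overview section as a known result (with citation \cite{Tin}) and is used only as context for the authors' main theorem. So there is no ``paper's proof'' against which to compare, and your task reduces to whether the argument stands on its own.

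It does. Your proof is the standard \emph{atom} argument: pick a minimum-cardinality member $A$ of the family of nontrivial $d$-cuts, show via edge-cut submodularity and Mader's lower bound (Theorem~\ref{edge-con_thm}) that two distinct $\mathrm{Aut}(G)$-translates of $A$ cannot overlap, and deduce that the translates form a system of imprimitivity. Each step checks out. In the second submodular inequality, since $|A\setminus A'|=|A'\setminus A|$ (as $|A|=|A'|$) and a singleton has boundary exactly $d$ while any larger proper piece would have boundary $\ge d+1$ by minimality, the bound $2d$ indeed forces both differences to be singletons. The first submodular inequality then pins $|\delta(A\cap A')|=d$, and minimality kills this when $|A|\ge 3$; the direct degree count disposes of $|A|=2$ once $d\ge 3$. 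Your bookkeeping that $A\cup A'$ is still proper (so Mader applies to it) uses $|A|\le\tfrac12|V(G)|$ and $|V(G)|\ge 4$, which you have. The final identification of the blocks as $d$-cliques via $|B|(d-r)=d$ and $r\le |B|-1$ is clean.

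One cosmetic remark: in the last paragraph you call the residual configuration ``$|A|=2$, $d=2$'' the one that ``survives''; strictly speaking you have already branched off $d=2$ at the outset, so nothing survives inside the $d\ge 3$ case --- the sentence reads as commentary rather than argument, which is fine.
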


The following recent theorem of van den Heuvel and Jackson gives a rough analogue of Tindell's
result under the assumption that $G$ has a fairly small edge-cut with sufficiently many vertices on either side.

\begin{theorem}[van den Heuvel, Jackson \cite{HJ}]
If $G$ is a finite connected $d$-regular vertex transitive graph and there exists a set $S \subseteq V(G)$ with
$\frac{1}{3}(d+1) \le |S| \le \frac{|V(G)|}{2}$ and
$|\delta S| < \frac{2}{9}(d+1)^2$, then there is
a block of imprimitivity that has less than $\tfrac{2}{9}(d+1)^2$ vertices.
\end{theorem}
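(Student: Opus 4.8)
The plan is to adapt the ``atom'' method of Mader and Watkins --- which analyzes minimum edge-cuts in vertex transitive graphs via submodularity --- to cuts of size below $\beta := \tfrac29(d+1)^2$. Let $\mathcal{B}$ be the family of all $X \subseteq V(G)$ with $\tfrac13(d+1) \le |X| \le |V(G)| - \tfrac13(d+1)$ and $|\delta X| < \beta$; since $|V(G)| \ge d+1$, the hypothesis on $S$ guarantees $S \in \mathcal{B}$, so $\mathcal{B} \neq \emptyset$. Among all members of $\mathcal{B}$, choose $F$ with $|\delta F|$ as small as possible and, subject to that, with $|F|$ as small as possible; we may assume $|F| \le \tfrac12 |V(G)|$. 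The goal is to show $F$ is a \emph{block} of imprimitivity with fewer than $\beta$ vertices; note it is automatically nontrivial, since $|F| \ge \tfrac13(d+1) \ge 2$ when $d \ge 3$ and $|F| \le |V(G)| - \tfrac13(d+1) < |V(G)|$ (the cases $d \le 2$ being vacuous or trivial).

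Two elementary facts do the work. First, the two submodularity inequalities for the boundary function,
\[
   |\delta(X \cap Y)| + |\delta(X \cup Y)| \le |\delta X| + |\delta Y|,
   \qquad
   |\delta(X \setminus Y)| + |\delta(Y \setminus X)| \le |\delta X| + |\delta Y|,
\]
each proved by counting the edges joining the classes $X \cap Y$, $X \setminus Y$, $Y \setminus X$, $V(G) \setminus (X \cup Y)$ of the partition they determine. Second, the bound $|\delta X| \ge |X|\,(d+1-|X|)$, which holds because $G[X]$ has at most $\binom{|X|}{2}$ edges. The quadratic $t \mapsto t(d+1-t)$ equals $\beta$ exactly at $t = \tfrac13(d+1)$ and at $t = \tfrac23(d+1)$ and is larger between them, so every $X$ with $|\delta X| < \beta$ and $|X| \ge \tfrac13(d+1)$ in fact has $|X| > \tfrac23(d+1)$; in particular $|F| > \tfrac23(d+1)$. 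This is the point at which the constant $\tfrac29$ is forced.

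The heart of the argument is to prove $F$ is a block, i.e.\ $\varphi(F) = F$ or $\varphi(F) \cap F = \emptyset$ for every $\varphi \in \mathrm{Aut}(G)$. Suppose not: some $F' = \varphi(F)$ crosses $F$, so $|F'| = |F|$ and $F \cap F'$, $F \setminus F'$, $F' \setminus F$ are each nonempty and of size strictly below $|F|$. If $|F \cap F'| < \tfrac13(d+1)$, then $|F \setminus F'| = |F| - |F \cap F'| > \tfrac23(d+1) - \tfrac13(d+1) = \tfrac13(d+1)$ and symmetrically for $F' \setminus F$; by the difference inequality, whichever of these two sets has the smaller boundary has boundary at most $|\delta F|$, lies in $\mathcal{B}$, and is strictly smaller than $F$, contradicting the choice of $F$. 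If $|F \cap F'| \ge \tfrac13(d+1)$, then either $F \cap F' \in \mathcal{B}$ --- whence minimality forces $|\delta(F \cap F')| > |\delta F|$ --- or $|\delta(F \cap F')| \ge \beta$; in both cases the intersection/union inequality gives $|\delta(F \cup F')| < |\delta F|$. Since $|F \cup F'| > |F| > \tfrac23(d+1)$, either $F \cup F' \in \mathcal{B}$, contradicting the minimality of $|\delta F|$, or $|F \cup F'| > |V(G)| - \tfrac13(d+1)$; in the latter case $Z := V(G) \setminus (F \cup F')$ satisfies $|Z| = |V(G)| - 2|F| + |F \cap F'| \ge |F \cap F'| \ge \tfrac13(d+1)$ (using $|F| \le \tfrac12 |V(G)|$), yet also $|Z| < \tfrac13(d+1)$, a contradiction. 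So no translate of $F$ crosses it, and $F$ is a block of imprimitivity.

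The size bound is then immediate: the setwise stabilizer of a block acts transitively on it, so $G[F]$ is vertex transitive, hence $d'$-regular for some $d'$, and since $G$ is connected with $F \neq V(G)$ we have $d' \le d - 1$. Therefore $|\delta F| = (d - d')\,|F| \ge |F|$, which combined with $|\delta F| < \beta$ gives $|F| < \tfrac29(d+1)^2$, as required. I expect the crossing analysis of the preceding paragraph to be the main obstacle --- especially the bookkeeping when $|F \cap F'| \ge \tfrac13(d+1)$, precisely the regime in which the threshold $\tfrac29(d+1)^2$ leaves no slack --- and one must additionally verify that for small $d$ the quantities $\tfrac13(d+1)$ and $\tfrac23(d+1)$ behave as the argument requires when rounded to integers, though those cases are routine.
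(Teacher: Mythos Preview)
The paper does not give a proof of this statement: it is quoted in the introductory overview as a result of van den Heuvel and Jackson \cite{HJ}, alongside the theorems of Mader, Watkins, Hamidoune, and Tindell, purely as background motivation for Theorem~\ref{main_thm}. There is therefore no ``paper's own proof'' to compare your attempt against.

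That said, your argument is essentially correct and is the standard atom/fragment approach one would expect (and is close in spirit to what van den Heuvel and Jackson do in \cite{HJ}). The submodularity inequalities are used exactly as they should be; the quadratic bound $|\delta X| \ge |X|(d+1-|X|)$ correctly forces $|F| > \tfrac{2}{3}(d+1)$ and explains the constant $\tfrac{2}{9}$; the two-case crossing analysis is sound (in Case~2 you implicitly use that $|\delta(F\cap F')| = |\delta F|$ together with $|F\cap F'| < |F|$ already contradicts the secondary minimality, which yields the strict inequality you need); and the final step $|\delta F| = (d-d')|F| \ge |F|$ is clean. The only cosmetic point is that the theorem as stated does not say ``nontrivial block'', so strictly speaking the conclusion is vacuous without that word --- but your $F$ has $|F| > \tfrac{2}{3}(d+1) \ge 2$, so it is nontrivial, which is of course the intended content.
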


Our main theorem may be viewed as a rough characterization that generalizes
Tindell's result, but without any assumptions relating to the degree of the graph and with an added assumption that the diameter
of the graph is large.  Before introducing the theorem we will require
some further definitions.
For $A \subseteq V(G)$, we denote by $G[A]$ the subgraph of $G$ induced on the
vertices in $A$. We define the \DEF{diameter} of $A$, denoted $diam(A)$, as the supremum of
$\dist_G(x,y)$ over all $x,y \in A$.  The \DEF{diameter} of $G$, denoted $\diam(G)$, is defined to be $\diam(V(G))$.
If $A$ is a proper subset of $V(G)$, then the \DEF{depth} of a vertex $v$ in $A$ is
$\dist_G(v, V(G) \setminus A)$.  The \DEF{depth} of $A$, denoted
$\depth(A)$, is the supremum over
all vertices $v$ in $A$ of the depth of $v$ in $A$.

If $S$ is a set, a \DEF{cyclic order on} $S$ is a symmetric relation $\sim$ so
that the corresponding graph is either a circuit, or a two-way-infinite path.
The \DEF{distance} between two elements in $S$ is defined to be the
distance in the corresponding graph, and an \DEF{interval} of $S$ is a finite subset $\{s_1,s_2,\ldots,s_m\} \subseteq S$ with $s_i \sim s_{i+1}$ for every $1 \le i \le m-1$.  A {\it cyclic system} $\vec{\sigma}$ on a graph $G$ is a system
of imprimitivity $\sigma$ on $V(G)$ equipped with a cyclic order (indicated by the arrow) which is preserved by the automorphism group of $G$.  If $s,t$ are positive integers,
we say that $G$ is \DEF{$(s,t)$-ring-like} with respect to the cyclic system $\vec{\sigma}$ if every block of
$\sigma$ has size $s$ and any two adjacent vertices of $G$ are in blocks which
are at distance $\le t$ (in the cyclic order) in $\vec{\sigma}$.

The main result of this paper is:

\begin{theorem}
\label{main_thm}
Let $G$ be a vertex transitive graph, let $A \subseteq V(G)$ be a finite
non-empty set with $|A| \le \tfrac12 |V(G)|$
such that\/ $G[A\cup\partial A]$ is connected.
Set $k = |\partial A|$ and assume that $diam(G) \ge 31(k+1)^2$.
Then one of the following holds:

\begin{enumerate}[label={\rm (\roman{*}) \ }, ref={\rm (\arabic{*})}]
\item   $\depth(A) \le k$ and\/ $|A| \le 2k^3+k^2$
$($and\/ $G$ is $d$-regular where $d \le \tfrac{3}{2}k-1)$.
\item   There exist integers $s,t$ with $st \le \frac{k}{2}$ and a cyclic system
        $\vec{\sigma}$ on $G$ so that $G$ is $(s,t)$-ring-like, and there exists an interval $J$ of $\vec{\sigma}$ so that the set $Q = \cup_{B \in J} B$ satisfies $A \subseteq Q$ and $|Q \setminus A| \le \tfrac{1}{2}k^3 + k^2$.
\end{enumerate}
\end{theorem}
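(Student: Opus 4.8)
\emph{Setup.} Being vertex transitive and locally finite, $G$ is regular, say of degree $d$; I take $G$ connected, so $\partial A\ne\emptyset$ and $k\ge 1$. Since $\diam(G)\ge 31(k+1)^2$ there are two vertices at distance $\ge 31(k+1)^2$, so $|V(G)|>2k$ and hence $A\cup\partial A\ne V(G)$ (otherwise $|V(G)|=|A|+k\le\tfrac12|V(G)|+k$). Theorem~\ref{vertex-con_thm} then gives $k\ge\tfrac23(d+1)$, i.e.\ $d\le\tfrac32k-1$, which is the parenthetical assertion of~(i). I will also use that in a vertex transitive graph all vertices have the same eccentricity, necessarily $\diam(G)$, so from every vertex there is another at distance $\ge 31(k+1)^2$. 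Note finally that $A$, and likewise every \emph{shell} $A_{\ge i}:=\{v\in A:\dist_G(v,\partial A)\ge i\}$ --- whose boundary is the layer $L_{i-1}$ of vertices of $A$ at distance exactly $i-1$ from $\partial A$ --- is a union of components of the graph obtained by deleting its own boundary; call such a set a \emph{fragment}.

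\emph{Uncrossing.} The basic tool is the submodular inequality $|\partial(X\cap Y)|+|\partial(X\cup Y)|\le|\partial X|+|\partial Y|$ for all $X,Y\subseteq V(G)$, which follows by a vertex-by-vertex count. Applied with $Y$ an $\mathrm{Aut}(G)$-image of $X$ (where $X$ is $A$ or one of its shells), it shows that whenever $X\cap Y$ is a proper nonempty subset of both $X$ and $Y$, one of $X\cap Y$, $X\cup Y$ is again a fragment, of boundary size $\le k$, that is either strictly smaller than $X$ or has strictly smaller boundary; Theorems~\ref{edge-con_thm} and~\ref{vertex-con_thm} (which bound the boundary of any nontrivial fragment from below by $\tfrac23(d+1)$) keep this descent from producing degenerate objects. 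Iterating, I either reach a fragment with boundary of size $<k$ --- in which case I induct on $k$, the hypothesis $\diam(G)\ge 31(k+1)^2$ being stronger than what is needed at level $k-1$ --- or I arrive at a collection of \emph{atoms}: fragments of a common minimal boundary size any two $\mathrm{Aut}(G)$-images of which are equal or disjoint.

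\emph{The dichotomy.} I split on $\depth(A)$. If $\depth(A)\le k$, then with $r=\depth(A)\le k$ I have $A=\bigcup_{i=1}^{r}L_i$ and each $L_{i-1}=\partial A_{\ge i}$ is the boundary of the fragment $A_{\ge i}$; choosing $A$ minimal (first in boundary size $\le k$, then, from the outside in, in the sizes of its shells) and uncrossing the shells against their translates forces the layer sizes not to grow, and a short count over the at most $k$ layers, together with a bookkeeping step comparing $A$ to the minimiser, gives $|A|\le 2k^3+k^2$, which is~(i). If $\depth(A)>k$, I fix $v_0\in A$ whose ball of radius $k$ lies in $A$ and a vertex $w$ with $\dist_G(v_0,w)=\diam(G)$, and I slide a minimal sub-separator of $\partial A$ along a geodesic from $v_0$ to $w$ by automorphisms; uncrossing at each step yields a nested chain of fragments of length comparable to $\diam(G)$. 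Transitivity forces successive differences along the chain to be images of one another, the atoms composing them are pairwise disjoint and tile $V(G)$, and the linear order of the chain --- closed up because $G$ is connected --- descends to an $\mathrm{Aut}(G)$-invariant cyclic order on these blocks: a cyclic system $\vec\sigma$ making $G$ $(s,t)$-ring-like, with $s$ the block size, $t$ the reach of edges across $\vec\sigma$, and $2st\le|\partial A|=k$, so $st\le\tfrac12k$. Locating $A$ in this ring, $A$ is trapped between two cuts, each a union of at most $t$ consecutive blocks, so $A\subseteq Q:=\bigcup_{B\in J}B$ for the interval $J$ they bound; and $Q\setminus A$ is a fragment whose boundary lies in $\partial A$ together with the $\le 2st\le k$ vertices of those cuts, hence of size $\le 2k$, and a short argument (a mild variant of the shallow-case count) bounds $|Q\setminus A|$ by $\tfrac12k^3+k^2$, which is~(ii).

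\emph{Main obstacle.} The heart of the proof is the structural passage in the deep case: showing that the long nested chain genuinely refines to a system of imprimitivity whose blocks carry an automorphism-invariant cyclic order --- in particular that the relevant atoms are \emph{pairwise disjoint} and not merely laminar, that $\mathrm{Aut}(G)$ permutes them blockwise, and that the bound $\diam(G)\ge 31(k+1)^2$ is precisely what stops the chain from wrapping around prematurely or collapsing to a bounded configuration (without a diameter hypothesis, short ring-like graphs defeat~(ii)). The second, more tedious, point is quantitative: carrying the losses through the repeated uncrossings and through the reduction of~(ii) to the shallow count so as to land exactly on the constants $st\le\tfrac12k$, $|A|\le 2k^3+k^2$, and $|Q\setminus A|\le\tfrac12k^3+k^2$. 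No separate treatment of finite and infinite $G$ is needed, since constancy of eccentricity, the submodular inequality, and the Mader--Watkins bounds all hold verbatim for locally finite infinite $G$ once $A$ is finite.
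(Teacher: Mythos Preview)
Your proposal has the right overall shape --- split on $\depth(A)$ --- and the setup (including the parenthetical $d\le\tfrac32 k-1$) is fine, but both branches contain real gaps.

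\textbf{Shallow case.} The claim that ``uncrossing the shells against their translates forces the layer sizes not to grow'' is unjustified. Uncrossing $A_{\ge i}$ against $\phi(A_{\ge i})$ only controls $|\partial(A_{\ge i}\cap\phi(A_{\ge i}))|+|\partial(A_{\ge i}\cup\phi(A_{\ge i}))|$ in terms of $|\partial A_{\ge i}|$; it does not bound $|\partial A_{\ge i}|$ in terms of $k=|\partial A|$, and there is no reason the layers $L_i$ must have size $\le k$. The paper does not count layers at all: it first shows $\diam(A)<k(2\,\depth(A)+1)\le k(2k+1)$ via a pigeonhole on $\partial A$ along a shortest path (Lemma~\ref{diam-depth_obs}(i)), and then applies the Babai--Szegedy inequality (Theorem~\ref{diam_bound_thm}) to get $|A|\le k(\diam(A)+1)\le 2k^3+k^2$. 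Without Babai--Szegedy or a genuine substitute, your shallow case does not close.

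\textbf{Deep case.} You correctly name the main obstacle --- producing an $\mathrm{Aut}(G)$-invariant cyclic system from a long nested chain --- but your proposal does not surmount it. ``Transitivity forces successive differences to be images of one another'' and ``atoms tile $V(G)$'' are precisely the statements that require proof, and simple atom/uncrossing arguments do not yield an invariant \emph{cyclic order}. The paper first manufactures a $(k,3k+6)$-tube (this is the content of Lemma~\ref{get-ring_lem}, whose Claim~(2) is the sliding-along-a-geodesic step you sketch), and then invokes the Tube Lemma. The Tube Lemma itself is substantial: for infinite $G$ it passes through the theory of two-ended graphs (Dunwoody's corollary, tightness, $\kappa_\infty=st$, cohesiveness); for finite $G$ it builds an infinite $\mathbb Z$-cover via a voltage assignment, proves that cover is vertex transitive, applies the two-ended theory upstairs, and pushes the cyclic system back down. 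Your assertion that ``no separate treatment of finite and infinite $G$ is needed'' is thus exactly where the argument is hardest, and the paper treats the two cases differently for good reason.

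\textbf{Bound on $|Q\setminus A|$.} This is not a ``mild variant of the shallow-case count.'' The paper's bound $|Q\setminus A|\le 2s^2t^2k+2stk\le\tfrac12 k^3+k^2$ (Lemma~\ref{ring-sep_lem}) uses the $2st$-\emph{cohesiveness} of $G$ with respect to $\vec\sigma$, a property established inside the Tube Lemma and essential for controlling how far one must travel inside an interval to find a boundary vertex. You neither state nor prove cohesiveness, so the final inequality is unsupported.
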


This is a structure theorem giving a rough characterization of vertex transitive
graphs with small separations in the sense that any set $A$ which satisfies (i) or (ii) must have
$|\partial A|$ bounded as a function of $k$.  Indeed, if $A$ satisfies (i) then
$|\partial A| \le d |A| \le (2k^3+k^2) (\frac{3k}{2} - 1) \le 3k^4$
and if $A$ satisfies (ii) then $|\partial A| \le |\partial Q| + |Q \setminus A|
 \le 2st + \tfrac{1}{2} k^3 + k^2
 \le \tfrac{1}{2} k^3 + k^2 + k$.

Our theorem has an immediate consequence for separations in
Eulerian digraphs. Note that finite vertex transitive digraphs are
always Eulerian, so the difference only occurs in the infinite case.
Let $\vec{G}$ be a vertex transitive digraph, and let $G$ be the
underlying unoriented graph (which is clearly vertex transitive).
Let $A \subseteq V(\vec{G})$ be a finite vertex-set such that
$0 < |A| \le \tfrac{1}{2}|V(G)|$ and
$G[A\cup\partial A]$ is connected, and set $k = |\partial^+(A)|$.
Let us also assume that $\diam(G) \ge 31(2k^2+1)^2$.
It follows from Theorem \ref{digraph-con_thm} that every vertex in $G$ has
indegree and outdegree $d$ where $d \le 2k-1$ so we have
$|\partial^-(A)| \le |\delta^-(A)| = |\delta^+(A)| \le k(2k-1)$ and we find that $|\partial A| \le 2k^2$
(in the unoriented graph $G$).  Thus, by the preceding theorem,
either $|A| \le 16k^6 +4k^4$ or
$G$ is $(s,t)$-ring-like with $st \le k^2$ and $A$ is efficiently contained in an interval.
\begin{corollary}
\label{cor:1.10}
Let $\vec{G}$ be a connected vertex transitive Eulerian digraph.
Let $A \subseteq V(\vec{G})$ be a finite vertex-set such that
$0 < |A| \le \tfrac{1}{2}|V(\vec{G})|$ and
$\vec{G}[A\cup\partial A]$ is connected, and set\/ $k = |\partial^+(A)|$.
Let us also assume that the diameter of the underlying undirected graph is at least $31(2k^2+1)^2$.
Then one of the following holds.
\begin{enumerate}[label={\rm (\roman{*}) \ }, ref={\rm (\arabic{*})}]
\item   $|A|\le 16k^6 +4k^4$.
\item   There exist integers $s,t$ with $st \le k^2$ and
a cyclic system $\vec{\sigma}$ on $\vec{G}$
so that $\vec{G}$ is $(s,t)$-ring-like and there exists
an interval $J$ of $\vec{\sigma}$ so that the set
$Q = \cup_{B \in J} B$ contains $A$ and
$|Q \setminus A| \le 4k^6 + 4k^4$.
\end{enumerate}
\end{corollary}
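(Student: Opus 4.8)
The plan is to deduce the corollary from Theorem~\ref{main_thm} applied to the underlying undirected graph $G$, after first translating the bound on $|\partial^+(A)|$ into a bound on the undirected boundary $|\partial A|$. Write $\ell = |\partial A|$. Since every automorphism of $\vec G$ is an automorphism of $G$, the graph $G$ is connected and vertex transitive, and the hypotheses $0<|A|\le\tfrac12|V(G)|$ and ``$G[A\cup\partial A]$ connected'' are inherited directly from the corresponding hypotheses on $\vec G$. So the only real task is to show $\ell\le 2k^2$ and then to check that the diameter hypothesis of Theorem~\ref{main_thm} is met.

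To bound $\ell$, I would first apply Hamidoune's theorem (Theorem~\ref{digraph-con_thm}). Its hypothesis $A\cup\partial^+(A)\ne V(\vec G)$ holds: $|A\cup\partial^+(A)|\le|A|+k\le\tfrac12|V(G)|+k$, and $|V(G)|\ge\diam(G)+1>31(2k^2+1)^2$ is far larger than $2k$, so this quantity is $<|V(G)|$. Hence $k=|\partial^+(A)|\ge\tfrac{d+1}{2}$, giving $d\le 2k-1$ for the common in/out-degree $d$ of $\vec G$. Because $\vec G$ is Eulerian, $|\delta^+(X)|=|\delta^-(X)|$ for every vertex set $X$; and counting the arcs of $\delta^+(A)$ by their heads, each of which lies in $\partial^+(A)$ and has in-degree $d$, gives $|\delta^+(A)|\le d\,|\partial^+(A)|\le(2k-1)k$. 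Therefore $|\partial^-(A)|\le|\delta^-(A)|=|\delta^+(A)|\le(2k-1)k$, and $\ell=|\partial A|\le|\partial^+(A)|+|\partial^-(A)|\le k+(2k-1)k=2k^2$.

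Now I would apply Theorem~\ref{main_thm} to $G$ and $A$: since $\ell\le 2k^2$ we have $31(\ell+1)^2\le 31(2k^2+1)^2\le\diam(G)$, so the theorem applies. In its conclusion~(i), $|A|\le 2\ell^3+\ell^2\le 2(2k^2)^3+(2k^2)^2=16k^6+4k^4$, which is conclusion~(i) of the corollary. In its conclusion~(ii) we obtain integers $s,t$ with $st\le\ell/2\le k^2$, a cyclic system $\vec\sigma$ with respect to which $G$ is $(s,t)$-ring-like, and an interval $J$ with $Q=\bigcup_{B\in J}B$ satisfying $A\subseteq Q$ and $|Q\setminus A|\le\tfrac12\ell^3+\ell^2\le\tfrac12(2k^2)^3+(2k^2)^2=4k^6+4k^4$. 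To finish, note that $\vec\sigma$ is also a cyclic system on $\vec G$: since $\mathrm{Aut}(\vec G)\le\mathrm{Aut}(G)$, the system of imprimitivity and the cyclic order preserved by $\mathrm{Aut}(G)$ are preserved by $\mathrm{Aut}(\vec G)$ as well, and the $(s,t)$-ring-like condition refers only to adjacency in the underlying graph. Thus $\vec G$ satisfies conclusion~(ii) of the corollary. The argument is essentially bookkeeping; the only points needing a little care are this last transfer of the ring-like structure from $G$ to $\vec G$ and the verification of Hamidoune's side condition, so I do not anticipate a genuine obstacle.
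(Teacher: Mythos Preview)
Your proposal is correct and follows essentially the same approach as the paper: the paper's proof is the paragraph immediately preceding the corollary, which uses Hamidoune's bound to get $d\le 2k-1$, the Eulerian identity $|\delta^-(A)|=|\delta^+(A)|\le k(2k-1)$ to conclude $|\partial A|\le 2k^2$, and then applies Theorem~\ref{main_thm}. Your write-up is actually a bit more careful than the paper's, in that you explicitly verify the side condition for Theorem~\ref{digraph-con_thm} and explain why the cyclic system on $G$ is also a cyclic system on $\vec G$ via $\mathrm{Aut}(\vec G)\le\mathrm{Aut}(G)$.
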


%
%

Interestingly, the same conclusion does not hold for (vertex transitive) digraphs which are not Eulerian.
Let $\vec{H}$ be an orientation of the infinite 3-regular tree such that every vertex has outdegree 1 and indegree 2. Then the vertex-set $B$ of a directed path has $|\partial^+(B)| = 1$ but $B$ may have
arbitrarily large size.

The main notion that we use in the proof is the depth of a set.  This is a convenient parameter for
our purposes, but leads us to make an assumption on the diameter of $G$ (to ``spread out" the graph) which is likely unnecessarily strong.  As far as we know, this theorem may be true without any such assumption.  Since we work primarily with depth, the bound on $\depth(A)$ in (i) is the natural consequence of our arguments.  To get a bound on the number of vertices in $A$ for (i) we (rather naively) apply the following pretty theorem which relates $|A|$, $|\partial A|$ and
$\diam(A)$.

\begin{theorem}[Babai and Szegedy \cite{BSz}]
\label{diam_bound_thm}
If $G$ is a connected vertex transitive graph and\/ $A \subset V(G)$ is a non-empty finite vertex-set with
$|A| \le \tfrac12 |V(G)|$, then
\[ \frac{| \partial A |}{|A|} \ge \frac{1}{ \diam(A) + 1}\,. \]
\end{theorem}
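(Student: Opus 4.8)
The plan is to prove the (formally stronger) bound $|A|\le\depth(A)\cdot|\partial A|$ and combine it with the elementary inequality $\depth(A)\le\diam(A)+1$; together these give $|\partial A|/|A|\ge 1/(\diam(A)+1)$. Note first that since $|A|\le\tfrac12|V(G)|$ and $G$ is connected, $A$ is a proper non-empty subset of $V(G)$, so $\partial A\ne\emptyset$. For the depth bound, choose $v\in A$ with $\dist_G(v,V(G)\setminus A)=\depth(A)=:\rho$ and a geodesic $v=v_0,v_1,\dots,v_\rho$ with $v_\rho\notin A$; since $\dist_G(v_i,V(G)\setminus A)\ge\rho-i\ge 1$ for $i<\rho$, the vertices $v_0,\dots,v_{\rho-1}$ all lie in $A$, and $\dist_G(v_0,v_{\rho-1})=\rho-1$, so $\diam(A)\ge\rho-1$.

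Next I would peel $A$ inwards. For $1\le i\le\rho$ set $X_i=\{v\in A:\dist_G(v,V(G)\setminus A)\ge i\}$, so that $A=X_1\supseteq X_2\supseteq\dots\supseteq X_\rho\supseteq X_{\rho+1}=\emptyset$. Writing $\partial^{\mathrm{in}}(Y)$ for the inner boundary of a set $Y$ (the vertices of $Y$ having a neighbour in $V(G)\setminus Y$), it is immediate from the definition that a vertex of $X_i$ belongs to $X_{i+1}$ precisely when all of its neighbours lie in $X_i$; hence $X_{i+1}=X_i\setminus\partial^{\mathrm{in}}(X_i)$, the $i$-th layer $M_i:=X_i\setminus X_{i+1}$ equals $\partial^{\mathrm{in}}(X_i)$, and $|A|=\sum_{i=1}^{\rho}|M_i|$. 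Moreover a vertex of $\partial(X_{i+1})$ is adjacent to some vertex of $X_{i+1}$, and therefore lies in $X_i$ (a vertex of $X_{i+1}$ has all its neighbours in $X_i$), while lying outside $X_{i+1}$; thus $\partial(X_{i+1})\subseteq M_i$ and $|\partial(X_{i+1})|\le|M_i|$.

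The crux is the following isoperimetric comparison, which is where vertex transitivity and the hypothesis $|A|\le\tfrac12|V(G)|$ enter: \emph{if $G$ is vertex transitive and $Y\subseteq V(G)$ is finite with $|Y|\le\tfrac12|V(G)|$, then $|\partial^{\mathrm{in}}(Y)|\le|\partial(Y)|$} --- informally, of the two parts of a vertex bipartition of $G$, the smaller part has the larger boundary. Granting this and applying it to each $X_i$ (legitimate, since $|X_i|\le|A|\le\tfrac12|V(G)|$): for $2\le i\le\rho$ we get $|M_i|=|\partial^{\mathrm{in}}(X_i)|\le|\partial(X_i)|\le|M_{i-1}|$, so the layer sizes satisfy $|M_1|\ge|M_2|\ge\dots\ge|M_\rho|$, while for $i=1$ it gives $|M_1|=|\partial^{\mathrm{in}}(X_1)|\le|\partial A|$. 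Hence $|M_i|\le|\partial A|$ for every $i$, and $|A|=\sum_{i=1}^{\rho}|M_i|\le\rho\,|\partial A|\le(\diam(A)+1)|\partial A|$, which is the assertion.

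The one non-routine ingredient is the comparison $|\partial^{\mathrm{in}}(Y)|\le|\partial(Y)|$, and I expect it to be the main obstacle. A naive matching argument --- trying to inject $\partial^{\mathrm{in}}(Y)$ into $\partial(Y)$ along boundary edges via Hall's condition --- does not work, since the Hall condition can genuinely fail even when the inequality itself holds; so I would instead argue by averaging over $\mathrm{Aut}(G)$ in the finite case, comparing $Y$ with its complement through a uniformly random automorphism and exploiting $|Y|\le|V(G)\setminus Y|$. The size hypothesis is essential, as the inequality fails for, e.g., a short arc of a long cycle regarded as the large side. In the infinite case ($A$ finite, $V(G)\setminus A$ infinite) the peeling identities and the bound $\depth(A)\le\diam(A)+1$ are purely combinatorial and unchanged, and the comparison is needed only for the finite sets $X_i$, to be established by a local version of the same reasoning.
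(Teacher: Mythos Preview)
Your approach has a genuine gap: both the ``formally stronger'' target bound $|A|\le\depth(A)\,|\partial A|$ and the isoperimetric comparison $|\partial^{\mathrm{in}}(Y)|\le|\partial Y|$ on which your peeling argument rests are false in general vertex-transitive graphs.

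A single example defeats both. Take $G=C_{10}$ (the $10$-cycle, certainly vertex transitive) and $A=\{1,2,4,5\}$, two adjacent pairs separated by the single vertex~$3$. Then $|A|=4\le\tfrac12|V(G)|$, and $\partial A=\{0,3,6\}$ so $|\partial A|=3$. Every vertex of $A$ has a neighbour outside $A$, so $\partial^{\mathrm{in}}(A)=A$ and $|\partial^{\mathrm{in}}(A)|=4>3=|\partial A|$; this already refutes the comparison you identify as ``the crux''. It also gives $\depth(A)=1$, whence $\depth(A)\,|\partial A|=3<4=|A|$, refuting your target inequality outright. (The theorem itself is unharmed here: $\diam(A)=\dist_G(1,5)=4$ and $|\partial A|/|A|=3/4\ge 1/5$.) No averaging over $\mathrm{Aut}(G)$ can rescue an inequality that is simply false; the difficulty is structural, not technical.

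There is a deeper reason this route was doomed. The bound $|A|\le\depth(A)\,|\partial A|$ you set out to prove is exactly the case $c=1$ of the conjecture the paper states immediately after Theorem~\ref{diam_bound_thm}, and which it explicitly leaves open even for \emph{some} constant $c>0$. The obstruction, already visible in the cycle example, is that depth is blind to $A$ having many shallow pieces or enclosing many small holes of its complement: each such feature inflates the inner boundary while contributing little to the outer boundary. Diameter, by contrast, supplies a single long geodesic to average translates against, which is what the Babai--Szegedy argument actually uses; the present paper only quotes their result and gives no proof of its own to compare with.
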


It appears likely that Theorem \ref{main_thm} should hold with a bound of the form $|A| \le ck^2$ instead of
$|A| \le 2k^3(1+o(1))$ in (i).  This strengthening would follow from the following conjecture that the diameter in Theorem \ref{diam_bound_thm} may be replaced by a constant multiple of the depth.

\begin{conjecture}
There exists a fixed constant $c>0$ so that in every connected vertex transitive graph we have
$\frac{| \partial A |}{ |A| } \ge \frac{c}{\depth(A)}$ whenever $A \subseteq V(G)$
is finite and $0 < |A| \le \tfrac12 |V(G)|$.
\end{conjecture}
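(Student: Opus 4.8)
\emph{A possible approach to the conjecture.}
As this is a conjecture, what follows is a strategy rather than a proof.

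One would first try to reduce to the case that $G[A]$ is connected. If $G[A]$ has components $A_1,\dots,A_r$, then $\partial A=\bigcup_i\partial A_i$ and $\depth(A)\ge\depth(A_i)$ for every $i$, so a bound valid for each $A_i$ ought to transfer to $A$ in a minimum counterexample; some care is needed, as a vertex of $\partial A$ may be adjacent to several components, but this should cost only a constant factor in $c$ (or can be built into the main argument). Assume now that $A$ is connected, set $D=\depth(A)$, and decompose $A$ by distance to the complement: for $1\le i\le D$ put $L_i=\{v\in A:\dist_G(v,V\setminus A)=i\}$, and let $L_0=\partial A$. Since $v\mapsto\dist_G(v,V\setminus A)$ is $1$-Lipschitz, every edge of $G[A\cup\partial A]$ joins vertices in layers differing by at most $1$, and $|A|=\sum_{i=1}^D|L_i|\le D\max_i|L_i|$. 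Hence it suffices to prove the \emph{uniform layer bound} $|\partial A|\ge c\,|L_i|$ for $1\le i\le D$, since this gives $|\partial A|\ge c\max_i|L_i|\ge\tfrac cD|A|$.

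The only general tool at hand is Theorem~\ref{diam_bound_thm} of Babai and Szegedy. Applied to $A$ itself it yields precisely the conclusion of the conjecture, but with $\diam(A)+1$ in place of $\depth(A)$; the whole point is that $\diam(A)$ may be vastly larger than $\depth(A)$, i.e.\ $A$ may be ``long and thin'', and this is exactly the situation in which some layer $L_i$ threatens to be much larger than $\partial A$. The crude alternative $|A|\le|\partial A|\cdot|B_G(D)|$, where $B_G(D)$ denotes a ball of radius $D$ in $G$, is useless unless $G$ has linear growth, since balls in vertex transitive graphs can grow exponentially.

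I would therefore split on the growth of $G$. If $G$ has linear growth, then $|B_G(D)|\le\mathrm{const}\cdot(D+1)$ and the covering bound already gives the conjecture. If $G$ has polynomial growth of degree at least $2$, one would invoke the Gromov--Trofimov description of such graphs as quasi-isometric to Cayley graphs of virtually nilpotent groups and prove the inequality there by an argument modelled on $G=\ZZ^d$, where a bound of the shape ``boundary at least a constant times volume divided by width'' is classical. The genuinely hard case --- and the reason the statement remains a conjecture --- is superpolynomial, in particular intermediate, growth, where neither tool applies. Here one must show directly that a shallow connected set cannot have small boundary; the natural route is a flow argument that routes one unit from each vertex of $A$ to $V\setminus A$ along a geodesic of length at most $D$ and uses vertex-transitivity (the graph looks the same from each layer, so the layer sizes cannot collapse and then re-expand without $\partial A$ being charged for it) to bound the flow through any vertex of $\partial A$ by a constant, giving $|\partial A|\ge\mathrm{const}\cdot|A|/D$. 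Establishing this load bound in arbitrary vertex transitive graphs of superpolynomial growth is the step I expect to be the main obstacle.
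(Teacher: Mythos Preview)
This statement is posed in the paper as an open conjecture; the paper gives no proof and mentions it only as a possible route to sharpening the bound $|A|\le 2k^3+k^2$ in Theorem~\ref{main_thm}(i) to one of order $k^2$. You correctly recognise this and offer a strategy rather than a proof, so there is nothing in the paper to compare against.

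A few remarks on the strategy. The reduction to connected $G[A]$ does not obviously cost only a universal constant: a boundary vertex can lie in $\partial A_i$ for up to $d$ distinct components, so summing the per-component inequalities yields only $|\partial A|\ge (c/d)\,|A|/\depth(A)$, and $d$ is unbounded over the class of vertex-transitive graphs. Your uniform layer bound $|\partial A|\ge c\,|L_i|$ is strictly stronger than what is needed (the conjecture asks only for $|\partial A|\ge \tfrac{c}{D}\sum_i|L_i|$, not $|\partial A|\ge c\max_i|L_i|$), and there is no evident reason it should hold with a universal $c$: the only general relation between consecutive layers is $|L_1|\le d\,|\partial A|$, and the factor $d$ again obstructs universality. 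Finally, the flow argument you sketch amounts to bounding the Voronoi cells of $\partial A$ inside $A$; each such cell lies in a ball of radius $D$, so has size at most $b(D)$, and one obtains only $|\partial A|\ge|A|/b(D)$ --- which is the conjecture precisely when $b(D)=O(D)$, i.e.\ in the linear-growth case you have already isolated. The appeal to ``the graph looks the same from each layer'' conflates vertex-transitivity (any two vertices are equivalent) with invariance of the pair $(G,A)$, which is not available. Your outline is thus a fair diagnosis of why the conjecture is open rather than a path toward resolving it.
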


\medskip

\subsection*{Asymptotic Expansion in Groups}

Asymptotic expansion or growth in groups is an extensive and well studied topic.
Here, instead of looking at $|AB|$ for a pair of finite sets $A,B$, we consider the asymptotic behavior of $|A^n|$ when $A$ is a generating set.  The major result in this area is the following theorem of Gromov which resolved (in the affirmative) a conjecture of Milnor.

\begin{theorem}[Gromov \cite{Gro}]
Let ${\mathcal G}$ be an infinite group, let $A \subseteq {\mathcal G}$ be a finite generating set, and assume further that $1 \in A$ and $\{ a^{-1} : a \in A \} = A$.  Then the function $n \mapsto |A^n|$ is bounded by a polynomial in $n$ if and only if ${\mathcal G}$ has a nilpotent subgroup of finite index.
\end{theorem}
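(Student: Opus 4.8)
The statement is Gromov's polynomial growth theorem, which the present paper quotes from \cite{Gro} rather than establishes, so a ``proof proposal'' here is really a recollection of the known argument; a genuinely self-contained proof is anything but routine. The plan splits along the two implications.

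\emph{Polynomial growth assuming virtual nilpotency.} This is the softer half. Polynomial growth --- indeed its exact degree --- is invariant under quasi-isometry, hence under passing to and from finite-index subgroups, so it suffices to bound $|A^n|$ for a finitely generated nilpotent group ${\mathcal H}$. I would induct on the nilpotency class along the lower central series ${\mathcal H}=\gamma_1\supseteq\gamma_2\supseteq\cdots$: each $\gamma_i/\gamma_{i+1}$ is finitely generated abelian, and every word of length $n$ can be put in a normal form in which the ``$\gamma_i$-part'' ranges over only $O(n^i)$ values, giving $|A^n|=O(n^D)$ with $D=\sum_i i\cdot\mathrm{rank}(\gamma_i/\gamma_{i+1})$, the Bass--Guivarc'h degree.

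\emph{Virtual nilpotency assuming polynomial growth.} This is the hard half, run as an induction on the degree $d$ of polynomial growth, the case $d=0$ (bounded growth, hence a finite group) being the base. The crux of the inductive step is to extract, from the growth hypothesis alone, a finite-index subgroup ${\mathcal G}'\le{\mathcal G}$ with infinite abelianization --- equivalently, admitting a surjection onto $\mathbb Z$ --- unless ${\mathcal G}$ is finite. Gromov's route to this is: rescale the word metric on a Cayley graph of ${\mathcal G}$; use a compactness criterion for pointed metric spaces (whose hypotheses are supplied by polynomial growth) to pass to a Gromov--Hausdorff limit $Y$, the asymptotic cone, and check that $Y$ is complete, locally compact, connected, homogeneous, geodesic, and of finite (Hausdorff) dimension bounded in terms of $d$; observe that $\mathrm{Isom}(Y)$ is then a locally compact group acting transitively on a finite-dimensional, locally connected space, so by the solution of Hilbert's fifth problem (Montgomery--Zippin, Gleason, Yamabe) it is a Lie group with finitely many components; and finally exploit the non-compactness of $Y$ to manufacture the sought-after surjection ${\mathcal G}'\to\mathbb Z$. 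Given the resulting short exact sequence $1\to K\to{\mathcal G}'\to\mathbb Z\to 1$, I would check --- this is where polynomial growth is used again --- that $K$ is finitely generated and has polynomial growth of degree at most $d-1$, so that $K$ is virtually nilpotent by the induction hypothesis; a Milnor--Wolf-type analysis of the $\mathbb Z$-action on $K$ (using polynomial growth once more to force that action to be, up to finite index, unipotent) then promotes ${\mathcal G}'$, and hence ${\mathcal G}$, to virtually nilpotent.

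\emph{The main obstacle} is this second implication, and within it the single step of producing a finite-index subgroup that surjects onto $\mathbb Z$: this is what converts the purely asymptotic hypothesis of polynomial growth into usable algebraic structure. Gromov's original proof pays for it with substantial machinery --- asymptotic cones together with the structure theory of locally compact groups --- and the principal later proofs (Kleiner's, via finite-dimensionality of the space of harmonic functions of polynomial growth on the Cayley graph; Ozawa's functional-analytic and cohomological streamlining; the effective Shalom--Tao version via ultralimits) are essentially alternative routes through exactly this reduction. By comparison, the outer induction on $d$ and the polycyclic/solvable endgame are standard.
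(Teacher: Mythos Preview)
Your proposal is correct: the paper does not prove this theorem at all but merely quotes it from \cite{Gro} as background, and you rightly flag this at the outset. Your outline of Gromov's argument (and of the later alternatives due to Kleiner, Ozawa, and Shalom--Tao) is accurate and appropriately identifies the production of a finite-index subgroup surjecting onto $\mathbb Z$ as the crux, so there is nothing to compare against in the paper itself.
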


In the special case that the growth is linear, the above theorem implies
that ${\mathcal G}$ has a subgroup isomorphic to $\ZZ$ of finite index,
and by a result of Freudenthal \cite{Fre} (see also Stallings \cite{Sta}),
this implies that
${\mathcal G}$ has a finite normal subgroup $N$ so that ${\mathcal G}/N$ is
either cyclic or dihedral. A clear proof of this special case,
which also features good explicit bounds, was obtained by Imrich and Seifter.

\begin{theorem}[Imrich and Seifter \cite{IS87}]
Let ${\mathcal G}$ be an infinite group, let $A \subseteq {\mathcal G}$
be a finite generating set, and assume
further that $1 \in A$ and $\{ a^{-1} : a \in A \} = A$.
If there exists an integer $k$ such that \/ $k \ge |A^k| - |A^{k-1}| =: q$,
then ${\mathcal G}$ has a cyclic subgroup of index $\le q$.
In particular, ${\mathcal G}$ has linear growth.
\end{theorem}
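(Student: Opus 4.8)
Write $f(n):=|A^n|$ and work in $G={\mathit Cay}(\mathcal{G},A)$, whose underlying undirected graph is connected and vertex transitive. Since $A=A^{-1}$ and $1\in A$, the sets $A^n$ form an increasing chain, and as $\mathcal{G}$ is infinite $f$ is strictly increasing: if $|A^{n+1}|=|A^n|$ then $A^n$ is closed under multiplication by $A$, hence a subgroup, hence all of $\mathcal{G}$. Thus $f(n)\ge n+1$ and $\diam(G)=\infty$, so the diameter hypothesis of Theorem~\ref{main_thm} is automatic for every ball $B_r:=A^r$. For such a ball, $\partial B_r=A^{r+1}\setminus A^r$, so $|\partial B_r|=f(r+1)-f(r)=:\gamma(r+1)$; the subgraph $G[A^{r+1}]$ is connected; $B_r$ is finite while $V(G)$ is not; and $\depth(B_r)\ge r+1$, witnessed by the identity, whose distance to $V(G)\setminus B_r$ equals $r+1$.

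The plan is to feed a suitable $B_r$ into Theorem~\ref{main_thm} with parameter $\gamma(r+1)$. Conclusion~(i) there would force $\depth(B_r)\le\gamma(r+1)$, i.e.\ $r+1\le\gamma(r+1)$; so whenever $\gamma(r+1)\le r$ we land in conclusion~(ii), getting positive integers $s,t$ with $st\le\tfrac12\gamma(r+1)$ and a cyclic system $\vec\sigma$ with respect to which $G$ is $(s,t)$-ring-like. From this I extract the cyclic subgroup. The blocks of $\vec\sigma$ carry an $\mathrm{Aut}(G)$-invariant cyclic order, and $\mathcal{G}$, acting transitively on $V(G)$ by translation, acts transitively on the blocks; so the block set is one orbit and, being infinite, forms a two-way infinite path, yielding a homomorphism $\phi:\mathcal{G}\to D_\infty$ onto a transitive --- hence finite-index --- subgroup. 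The kernel $K=\ker\phi$ fixes every block setwise, so the $K$-orbit of $1$ lies in the block $Z$ through $1$; hence $K\subseteq Z$ and $|K|\le s$. The image $\phi(\mathcal{G})$ is an infinite subgroup of $D_\infty$, so its translation subgroup $T\cong\mathbb{Z}$ has index $\le2$ in it; then $\phi^{-1}(T)$ has index $\le2$ in $\mathcal{G}$, $K$ is a finite normal subgroup of $\phi^{-1}(T)$ with $\phi^{-1}(T)/K\cong\mathbb{Z}$, and lifting a generator produces an infinite cyclic subgroup of index $\le|K|\le s$. Thus $\mathcal{G}$ is virtually $\mathbb{Z}$ --- recovering the Freudenthal--Stallings normal-subgroup description --- with a cyclic subgroup of index $\le2s$. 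Taking $r=k-1$, legitimate once $q=\gamma(k)\le k-1$ (for then conclusion~(i) fails), gives $st\le\tfrac12 q$, so $s\le\tfrac12 q$ since $t\ge1$, and the cyclic subgroup has index $\le2s\le q$, as required.

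The main obstacle is the borderline case $q=k$, where $r=k-1$ yields $\gamma(r+1)=r+1$ and conclusion~(i) is not excluded, so one must instead find some $r$ with $\gamma(r+1)\le\min(r,k)$ and rerun the extraction above. The genuine difficulty inside this is the degenerate possibility that \emph{no} $r\ge0$ satisfies $\gamma(r+1)\le r$, equivalently $\gamma(s)\ge s$ for all $s\ge1$: then $f(s)\ge1+\binom{s+1}{2}$, so $G$ has at least quadratic growth; conclusion~(ii) then holds for no ball (else the previous paragraph would give a finite-index cyclic subgroup, hence linear growth, a contradiction), so conclusion~(i) holds for every ball, forcing $G$ to be $d$-regular with $d\le\tfrac32\gamma(r+1)-1$ and $|B_r|\le2\gamma(r+1)^3+\gamma(r+1)^2$ for all $r$. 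I expect essentially all the remaining work to lie in excluding this possibility: one must show that a vertex transitive graph of genuinely superlinear growth has spheres growing strictly faster than linearly, so that $\gamma(k)=k$ cannot occur --- for groups this is consistent with the classification of quadratic-growth groups as virtually $\mathbb{Z}^2$, where $\gamma(s)\ge(4-o(1))s$ --- and I would expect this step to need the isoperimetric and depth estimates that underlie the proof of Theorem~\ref{main_thm} itself, not merely its statement. A softer alternative invokes the classical fact that a finitely generated group of linear growth is virtually $\mathbb{Z}$; granting that, $\gamma$ is eventually constant, and a short computation with the ring-like structure pins the index of a cyclic subgroup below $q$.
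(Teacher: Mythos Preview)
Your approach is exactly what the paper has in mind: the paper gives no proof of the Imrich--Seifter theorem beyond the one-line remark that it ``may also be obtained as a consequence of our Corollary~\ref{cor:17},'' and your argument is precisely the unpacking of that corollary---apply Theorem~\ref{main_thm} to the ball $A^{k-1}$, observe that $\depth(A^{k-1})=k$ and $|\partial A^{k-1}|=q$, and read off the ring-like structure from conclusion~(ii) once $\depth>|\partial A|$. For $q\le k-1$ this is complete and correct, and your extraction of a cyclic subgroup of index $\le 2s\le 2st\le q$ from the $(s,t)$-ring-like structure is clean.

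The borderline case $q=k$ that you flag is a genuine gap, and the paper does not address it either. With $A=A^{k-1}$ one has $\depth(A)=k=q=|\partial A|$ exactly, so conclusion~(i) of Theorem~\ref{main_thm} (which only asserts $\depth(A)\le k$, not $<k$) is \emph{not} excluded, and nothing in Corollary~\ref{cor:17} or Theorem~\ref{main_thm} alone forces the ring-like conclusion. Your ``softer alternative''---first invoking the classical fact that a finitely generated group of linear growth is virtually~$\mathbb{Z}$, then extracting the index bound---is essentially circular, since the Imrich--Seifter theorem \emph{is} that classical fact together with the explicit index bound; and even granting it, it is not clear how eventual constancy of $\gamma$ by itself pins the index at~$\le q$ rather than at the eventual value of $\gamma$. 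Your other suggestion, ruling out $\gamma(s)\ge s$ for all $s$ via sharper isoperimetry, is plausible but not carried out. So as written, your derivation (like the paper's one-line pointer) establishes the full statement only under the marginally stronger hypothesis $q<k$; the equality case $q=k$ requires an additional idea that neither you nor the paper supplies.
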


This result may also be obtained as a consequence of our Corollary \ref{cor:17}
which appears in the next section.

\medskip

\subsection*{Asymptotic Expansion in Graphs}

Before discussing this topic, we require two more definitions.
For any vertex $x \in V(G)$ and any positive integer $k$, we let $B(x,k)$ denote
the set of vertices at distance at most $k$ from $x$.  If $G$ is vertex transitive,
then $|B(x,k)| = |B(y,k)|$ for every $x,y \in V(G)$. The function
$b: \NN \to \NN$ given by $b(k) = |B(x,k)|$ is called the \DEF{growth function}
of~$G$.

The study of asymptotic expansion in graphs is the study of the behavior of
the growth function.  It is easy to see that if
$G = {\mathit Cay}({\mathcal G},A)$, where $1\in A$, then $b(k) = |A^k|$,
so this is a direct
generalization of the study of expansion in groups.  The following result is
the major accomplishment in this area and gives a direct generalization of
Gromov's theorem.

\begin{theorem}[Trofimov \cite{Tro}]
Let $G$ be a vertex transitive graph and assume that its growth function is bounded by a polynomial.
Then there exists a system of imprimitivity $\sigma$ with finite blocks so that $Aut(G^{\sigma})$ is
finitely generated, has a nilpotent subgroup of finite index, and the stabilizer of every vertex in $G^{\sigma}$ is finite.
\end{theorem}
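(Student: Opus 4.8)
\emph{Sketch.} The strategy is to reduce the statement to a fact about a finitely generated group and then invoke Gromov's theorem. Put $A = Aut(G)$ and topologise it by pointwise convergence on $V(G)$; since $G$ is connected, locally finite and vertex transitive, $A$ is a totally disconnected, locally compact, compactly generated group that acts transitively on $V(G)$, and each vertex stabiliser $A_v$ is a compact open subgroup. The obstruction to $A$ behaving like a finitely generated group is that $A_v$ may be infinite, so the aim is to produce an $A$-invariant system of imprimitivity $\sigma$ with \emph{finite} blocks whose quotient $G^\sigma$ is acted on by $Aut(G^\sigma)$ with finite vertex stabilisers. Granting such a $\sigma$, the graph $G^\sigma$ is again connected, locally finite and vertex transitive, and since all blocks of $\sigma$ are finite of a common size the quotient map is a quasi-isometry, so $G^\sigma$ has polynomial growth. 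Then $Aut(G^\sigma)$ acts on $G^\sigma$ properly (finite vertex stabilisers) and cocompactly (transitively), so by the Milnor--Schwarz lemma it is finitely generated and quasi-isometric to $G^\sigma$; in particular it has polynomial growth, and Gromov's theorem furnishes a nilpotent subgroup of finite index. This yields all three conclusions simultaneously.

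The real content is the construction of $\sigma$. I would work with the compact open subgroup $U = A_v$ of $A$, filtered by the pointwise stabilisers $U = A_v^{[0]} \supseteq A_v^{[1]} \supseteq \cdots$, where $A_v^{[r]}$ fixes the ball $B(v,r)$ vertexwise. Each $A_v^{[r]}$ is open of finite index in $U$ (being the kernel of the action of the compact group $U$ on the finite set $B(v,r)$), and $\bigcap_r A_v^{[r]}$ is trivial because $A$ acts faithfully; thus this filtration precisely records the non-discreteness of $A$. The role of polynomial growth is to tame this filtration: it should produce a radius $r_0$ and a constant $c$ so that every orbit of $A_v^{[r_0]}$ on $V(G)$ is finite of size at most $c$. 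One then symmetrises these orbits over $A$ — passing, say, to the coarsest partition refined by all $A$-translates of them — and checks, again using the growth bound, that the resulting partition still has finite blocks; this is the desired system of imprimitivity $\sigma$. Finally one verifies that collapsing the blocks of $\sigma$ removes enough of $U$ that the vertex stabilisers in $Aut(G^\sigma)$ become finite.

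The main obstacle is exactly this construction, which is the heart of Trofimov's argument: the orbits of $A_v^{[r_0]}$ are not themselves blocks of imprimitivity, and the symmetrisation step can a priori blow blocks up to infinite size, so polynomial growth is used essentially — roughly speaking, to show that the non-discrete part of $A$ is confined to a coarsely bounded, tube-like region of $G$. An alternative route to the same point is via the structure theory of totally disconnected locally compact groups (tidy subgroups and the scale function), which supplies the compact-by-discrete decomposition of $A$ needed to define $\sigma$. Either way, this step cannot be extracted from the results quoted above, and a complete proof follows Trofimov \cite{Tro}.
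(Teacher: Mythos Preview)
The paper does not prove this statement; it is quoted as a background result due to Trofimov \cite{Tro}, stated for context alongside Gromov's theorem and the Imrich--Seifter results, with no proof given. There is therefore no ``paper's own proof'' to compare your proposal against.

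That said, your sketch is an accurate high-level outline of Trofimov's actual argument: one does reduce to Gromov's theorem via the Milnor--Schwarz lemma after passing to a quotient $G^\sigma$ with finite vertex stabilisers, and the substantive work is precisely the construction of $\sigma$, which you correctly flag as the crux and the part you cannot supply here. Your honest closing remark that ``a complete proof follows Trofimov \cite{Tro}'' is appropriate --- this is not a result the present paper attempts to reprove.
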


As before, in the case when the growth function $b$ is bounded by a linear function,
the structure of $G$ can be obtained by a more elementary combinatorial argument,
as in the following result.

\begin{theorem}[Imrich and Seifter \cite{IS88}]
Let $G$ be an infinite connected vertex transitive graph, and let $b$ be
the growth function of $G$.  Then $G$ has two ends if and only if $b(n)$ is bounded by a linear function in $n$.
\end{theorem}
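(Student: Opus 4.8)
The plan is to prove the two implications separately. The implication ``linear growth $\Rightarrow$ two ends'' carries the content, and there I would simply feed suitable balls of $G$ into Theorem~\ref{main_thm}; for the reverse implication I would quote the classical structure theory of two-ended transitive graphs.

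\emph{Two ends $\Rightarrow$ linear growth.} Here a two-ended vertex transitive graph is quasi-isometric to $\mathbb{Z}$, and linear growth is immediate. Concretely: choose a finite $S\subseteq V(G)$ with $G-S$ having exactly two infinite components, enlarged so that $G-S$ has no finite component; thus $V(G)=S\cup C_1\cup C_2$ with $G[C_1],G[C_2]$ connected and no edge between $C_1$ and $C_2$. Using vertex transitivity to push $S$ deeper into $C_2$ (and into $C_1$) and iterating, one obtains a partition of $V(G)$ into finite ``slabs'' $(T_i)_{i\in\mathbb{Z}}$, all of one bounded size $m$, with an edge joining $T_i$ to $T_j$ only when $|i-j|\le c$ for a fixed $c$. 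A path from a fixed $x\in T_0$ to a vertex of $T_i$ meets every slab in between, so $\dist_G(x,T_i)\ge|i|/c$; hence $B(x,n)\subseteq T_{-cn}\cup\cdots\cup T_{cn}$ and $b(n)=|B(x,n)|\le(2cn+1)m$ is linear.

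\emph{Linear growth $\Rightarrow$ two ends.} Suppose $b(n)\le an+c'$ for all $n$. From $b(N)=b(0)+\sum_{n=1}^{N}\bigl(b(n)-b(n-1)\bigr)\le aN+c'$, the integers $b(n)-b(n-1)$ cannot all eventually exceed $a$, so $b(n)-b(n-1)\le k:=\lceil a\rceil$ for infinitely many $n$. Fix such an $n$ and put $A=B(x,n-1)$ and $S(x,n)=B(x,n)\setminus B(x,n-1)$. A vertex outside $A$ with a neighbour in $A$ lies at distance exactly $n$ from $x$, so $\partial A\subseteq S(x,n)$ and $|\partial A|\le k$. Also $A$ is finite and nonempty, $|A|\le\tfrac12|V(G)|$ since $G$ is infinite, $G[A\cup\partial A]$ is connected (as $G[A]$ is connected and every vertex of $\partial A$ has a neighbour in $A$), and $\diam(G)$ is infinite, hence at least $31(|\partial A|+1)^2$. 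So Theorem~\ref{main_thm} applies to $A$. Its alternative~(i) would force $|A|\le 2|\partial A|^3+|\partial A|^2\le 2k^3+k^2$, a constant independent of $n$; but $|A|=b(n-1)\to\infty$, so for all large $n$ alternative~(ii) holds, and $G$ is $(s,t)$-ring-like with respect to a cyclic system $\vec{\sigma}$ with $st\le k/2$.

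It remains to read off that $G$ has exactly two ends. The cyclic order underlying $\vec{\sigma}$ cannot be a finite circuit, for otherwise $|V(G)|$ would be the (finite) number of blocks times $s$; so it is a two-way-infinite path, along which I index the blocks as $(B_i)_{i\in\mathbb{Z}}$. Every edge joins some $B_i$ to some $B_j$ with $|i-j|\le t$, so deleting the finite set $B_1\cup\cdots\cup B_t$ separates $\{B_i:i\le 0\}$ from $\{B_i:i\ge t+1\}$, both infinite; thus $G$ has at least two ends. Moreover, a connected vertex transitive graph with more than two ends has exponential growth, which contradicts the linear bound on $b$; hence $G$ has exactly two ends --- precisely the ``ring-like over a two-way-infinite path'' picture. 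The step I expect to require the most care is the averaging argument that manufactures balls of uniformly bounded boundary, together with the bookkeeping that upgrades ``at least two ends'' to ``exactly two''; after that, Theorem~\ref{main_thm} supplies the structure for free, and the other implication is standard.
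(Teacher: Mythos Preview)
Your proposal is correct. Note, however, that the paper does not itself prove this cited theorem of Imrich and Seifter; it only remarks that Theorem~\ref{main_thm} yields ``a result similar to the above'' and then proves Corollary~\ref{cor:17}. Your argument and the paper's proof of Corollary~\ref{cor:17} share the same skeleton---feed balls $B(x,n)$ into Theorem~\ref{main_thm} and rule out alternative~(i)---but you invoke the \emph{size} bound $|A|\le 2k^3+k^2$ in~(i), which forces you into the averaging step to locate balls with uniformly small boundary, whereas the paper uses the \emph{depth} bound $\depth(A)\le k$ in~(i): since $\depth(B(x,n))\ge n+1$ automatically, alternative~(i) would give $|\partial B(x,n)|\ge n+1$ for every $n$, hence $b(n)\ge 1+\tfrac12 n(n+1)$, with no averaging needed. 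Both routes are valid; the depth route is a line shorter. For the final bookkeeping (upgrading ``ring-like over $\mathbb{Z}$'' to ``exactly two ends''), your appeal to the exponential-growth dichotomy for graphs with infinitely many ends is legitimate but external to the paper; within the paper the infinite case of the Tube Lemma (Section~\ref{sect:tube lemma}) already establishes that $G$ has two ends before invoking Theorem~\ref{two_ends_thm}, so one could cite that machinery instead.
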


Our Theorem \ref{main_thm} can be used to obtain
a result similar to the above, but it also gives the following
explicit lower bound on the growth of infinite vertex
transitive graphs which are not ring-like.

\begin{corollary}
\label{cor:17}
If\/ $G$ is a connected infinite vertex transitive graph and a finite
vertex-set $A$ has $\depth(A) > |\partial A|$, then $G$ is
$(s,t)$-ring-like where $st \le \frac{1}{2} |\partial A|$.
In particular, $b(n) > \tfrac12 n(n+1)$ (for every $n\ge0$)
if\/ $G$ is not ring-like.
\end{corollary}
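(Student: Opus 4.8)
The plan is to derive Corollary~\ref{cor:17} directly from Theorem~\ref{main_thm} after replacing $A$ by a set to which that theorem applies. Put $k=|\partial A|$, and since $A$ is finite choose $v\in A$ with $\dist_G(v,V(G)\setminus A)=\depth(A)$, so $\dist_G(v,V(G)\setminus A)\ge k+1$. The ball $B(v,k)$ then lies inside $A$; being connected and containing $v$, it lies inside the connected component $C$ of $G[A\cup\partial A]$ that contains $v$. I would set $A'=A\cap C$ and check that $A'$ is a legitimate input for Theorem~\ref{main_thm}. Two points need verification. First, $\partial A'=C\cap\partial A$: any neighbour of a vertex of $A'\subseteq A$ lies in $A\cup\partial A$, hence (being adjacent to $C$) in $C$, which gives $\partial A'\subseteq C\setminus A'=C\cap\partial A$; and any $w\in C\cap\partial A$ has a neighbour $a\in A$, with $a\in C$ since it is adjacent to $w\in C$, so $w\in\partial A'$. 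Consequently $A'\cup\partial A'=C$ is connected and $|\partial A'|\le k$. Second, since $B(v,k)\subseteq C$, every vertex outside $C$ is at distance $\ge k+1$ from $v$, and the same holds for every vertex outside $A$; as $V(G)\setminus A'=(V(G)\setminus A)\cup(V(G)\setminus C)$, we obtain $\depth(A')\ge\dist_G(v,V(G)\setminus A')\ge k+1>|\partial A'|$.

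The remaining hypotheses of Theorem~\ref{main_thm} are automatic: $A'$ is finite and non-empty, $|A'|\le\tfrac12|V(G)|$ because $G$ is infinite, and $\diam(G)=\infty\ge 31(|\partial A'|+1)^2$ because $G$ is connected, infinite and locally finite. Applying the theorem to $A'$, alternative~(i) is excluded since it would force $\depth(A')\le|\partial A'|$, so alternative~(ii) holds: there are integers $s,t$ with $st\le\tfrac12|\partial A'|\le\tfrac12 k=\tfrac12|\partial A|$ and a cyclic system for which $G$ is $(s,t)$-ring-like. This proves the first assertion.

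For the last sentence, suppose $G$ is not ring-like. By the first assertion (contrapositive), every finite non-empty $A\subseteq V(G)$ satisfies $\depth(A)\le|\partial A|$. Fix $x\in V(G)$ and $n\ge0$, write $S_j=\{u\in V(G):\dist_G(x,u)=j\}$, and note $S_j=\partial B(x,j-1)$ for $j\ge1$ and $b(n)=1+\sum_{j=1}^n|S_j|$. Applying the inequality to $A=B(x,j-1)$ gives $|S_j|=|\partial B(x,j-1)|\ge\depth(B(x,j-1))\ge\dist_G(x,V(G)\setminus B(x,j-1))=j$, where a vertex at distance exactly $j$ exists because $G$ is connected, infinite and locally finite. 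Summing over $1\le j\le n$ yields $b(n)\ge 1+\tfrac12 n(n+1)>\tfrac12 n(n+1)$.

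I expect the only genuine obstacle to be the reduction in the first paragraph, namely exhibiting a set with connected $G[A'\cup\partial A']$ that still satisfies the strict inequality $\depth(A')>|\partial A'|$ and has $|\partial A'|\le|\partial A|$; once that is in place, excluding alternative~(i) and the elementary sphere count are immediate and involve no delicate estimates.
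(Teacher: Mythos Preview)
Your proof is correct and follows essentially the same route as the paper: apply Theorem~\ref{main_thm} to rule out alternative~(i) via the depth inequality, and then sum the sphere bounds $|\partial B(x,j-1)|\ge j$ to get $b(n)\ge 1+\tfrac12 n(n+1)$. In fact you are more careful than the paper, which simply asserts that the first claim ``follows directly'' from Theorem~\ref{main_thm} without addressing the connectedness hypothesis on $G[A\cup\partial A]$; your reduction to the component $A'$ fills that gap cleanly.
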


\begin{proof}
The first conclusion follows directly from Theorem \ref{main_thm}.
Consider now the set $A = B(x,n)$. Clearly, $\depth(A) \ge n+1$.
So, if the previous case does not apply, we conclude that
$|\partial A|\ge n+1$, and hence
$b(n+1) = |A| + |\partial A| \ge b(n) + n + 1$ for every $n\ge0$.
This implies that $b(n) \ge 1+\tfrac12 n(n+1)$ for $n\ge0$.
\end{proof}

\medskip

\subsection*{Structural Properties}

We now turn our attention away from expansion and toward the structure of vertex transitive graphs.
Next we state an important (yet unpublished) theorem of Babai which is related to our main theorem.

\begin{theorem}[Babai \cite{Bab}]
There exists a function $f:\NN\to\NN$ so that every finite vertex transitive
graph $G$ without $K_n$ as a minor satisfies one of the following properties:

\begin{enumerate}[label={\rm (\roman{*}) \ }, ref={\rm (\arabic{*})}]
\item $G$ is a vertex transitive map on the torus.
\item $G$ is $(f(n),f(n))$-ring-like.
\end{enumerate}
\end{theorem}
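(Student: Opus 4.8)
The plan is to split into cases according to the tree-width of $G$, after reducing to the case that $G$ is connected and of bounded degree. A disconnected vertex transitive graph is a disjoint union of isomorphic copies of a connected one, so it suffices to treat the connected case; and a graph with no $K_n$ minor has average degree $O(n\sqrt{\log n})$ by the theorem of Kostochka and Thomason, so, $G$ being regular, its degree is at most some $d_0(n)$. Fix a tree-width threshold $w_0=w_0(n)$ (to be chosen below) and put $D_0=31(w_0+2)^2$. If $\diam(G)<D_0$ then, the degree being at most $d_0(n)$, the graph has at most $g_0(n)$ vertices for some function $g_0$; there are only finitely many vertex transitive graphs on that many vertices, each absorbed into the conclusion once $f(n)$ is large. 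So I may assume $\diam(G)\ge D_0$.

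Suppose first that $\mathrm{tw}(G)\le w_0$. A standard balanced-separator argument yields a set $B$ with $|B|\le w_0+1$ such that every component of $G-B$ has at most $\tfrac12|V(G)|$ vertices. Taking $A$ to be either a single such component of size at least $\tfrac1{10}|V(G)|$, or (if none exists) a union of components of total size between $\tfrac1{10}|V(G)|$ and $\tfrac12|V(G)|$, and adjusting slightly so that $G[A\cup\partial A]$ is connected, I obtain a finite set $A$ with $k:=|\partial A|\le w_0+1$, with $|A|\le\tfrac12|V(G)|$, and with $|A|\ge\tfrac1{10}|V(G)|\ge\tfrac1{10}(\diam(G)+1)>2k^3+k^2$ once $w_0$ is large. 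Since $\diam(G)\ge D_0\ge 31(k+1)^2$, Theorem~\ref{main_thm} applies, its alternative (i) is excluded by $|A|>2k^3+k^2$, and so $G$ is $(s,t)$-ring-like with $st\le k/2$. This is conclusion (ii), provided $f(n)\ge w_0(n)$.

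Now suppose $\mathrm{tw}(G)>w_0$. Here I would invoke the structure theory of graphs with an excluded minor: by the Flat Wall Theorem of Robertson and Seymour (or the decomposition theorem for $K_n$-minor-free graphs), a $K_n$-minor-free graph of large tree-width contains, after deleting a bounded vertex set, a large \emph{flat} wall --- a subdivided grid whose interior lies in an open disc not interfered with by the rest of the graph. Using vertex transitivity, a large portion of such a wall appears around every vertex, the discs supplied by flatness overlap coherently, and patching them together embeds $G$ in a closed surface of Euler genus bounded in terms of $n$. Finally, a large connected vertex transitive graph of bounded genus and tree-width exceeding $w_0$ must be a vertex transitive map on the torus: once $w_0$ is large, the only alternative behaviours (planar, or genus between $2$ and the bound) produce graphs of bounded size or bounded tree-width, hence fall under the earlier cases. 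This gives conclusion (i) and fixes the choice of $w_0(n)$.

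The main obstacle is this last step: converting a single flat wall, by means of transitivity, into a genuine bounded-genus embedding of the \emph{whole} graph, and then pinning the surface down to the torus. Keeping the overlaps of the local discs consistent --- so that the gluing is well defined and the Euler characteristic does not blow up --- is the delicate point, and is exactly where full vertex transitivity, as opposed to merely a large automorphism orbit, is needed. By contrast the bounded-tree-width case is routine given Theorem~\ref{main_thm}, and the preliminary reductions are standard.
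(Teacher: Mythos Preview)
The paper does \emph{not} prove this theorem. It is stated as an unpublished result of Babai, and the authors explicitly say ``our theorem can be used to obtain a strengthening of Babai's theorem with explicit values for the function $f(n)$. We shall explore this in a subsequent paper.'' So there is no proof here to compare against; what the paper does contain is Corollary~\ref{cor:19}, which handles exactly your bounded-tree-width branch.

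On that branch, your idea is right but the execution is sloppy in two places. First, with $D_0=31(w_0+2)^2$ the chain $|A|\ge\tfrac1{10}|V(G)|\ge\tfrac1{10}(\diam(G)+1)>2k^3+k^2$ fails: the middle quantity is quadratic in $w_0$ while the right side is cubic. You need to take $D_0$ of order $w_0^3$ (this is harmless, since the diameter hypothesis in Theorem~\ref{main_thm} is only a lower bound). Second, in the ``union of components'' case you cannot ``adjust slightly'' to make $G[A\cup\partial A]$ connected without losing control of $|\partial A|$. The clean fix is the one in the paper's proof of Corollary~\ref{cor:19}: apply the balanced separator lemma with $W$ a long geodesic, so that some single component of $G-S$ contains a vertex of $W$ at distance $>k$ from $S$; that component alone then has $\depth(A)>k$, which already rules out alternative~(i) of Theorem~\ref{main_thm} without any size estimate.

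The large-tree-width branch is where the real content lies, and your sketch is not a proof. You correctly identify the obstacle yourself: the Flat Wall Theorem hands you one flat wall after deleting a bounded apex set, but turning this via transitivity into a consistent embedding of all of $G$ in a fixed surface is genuinely delicate --- the apex sets for different translates need not match, and there is no automatic mechanism ensuring the local discs glue coherently. Even granting a bounded-genus embedding, the assertion that a vertex transitive graph of large tree-width embedded in a surface of genus $\ge 2$ must have bounded size is a separate theorem (it rests on Hurwitz-type finiteness for automorphism groups of higher-genus surfaces) that you have not supplied. As written, this branch is a plausible outline with two substantial gaps, not a proof.
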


It appears likely to us that an inexplicit version of our theorem for finite graphs
might be obtained from Babai's theorem (which does not give the function $f$
explicitly).  However, at this time we do not have a proof of this.  Conversely,
our theorem can be used to obtain a strengthening of Babai's theorem with explicit
values for the function $f(n)$. We shall explore this in a subsequent paper \cite{DM}.
Here we only state the following corollary of Theorem \ref{main_thm}
that may be of independent interest. This result involves the notion of the tree-width,
whose definition is postponed until Section~\ref{sect:proofs}.

\begin{corollary}
\label{cor:19}
If\/ $G$ is a connected finite vertex transitive graph and $k$ is a positive integer, then
one of the following holds.
\begin{enumerate}[label={\rm (\roman{*}) \ }, ref={\rm (\arabic{*})}]
\item   $G$ is $(s,t)$-ring-like, where $2st\le k$.
\item   $G$ has tree-width $\ge k$.
\item   The degree of vertices in\/ $G$ is at most\/ $k - 1$ and the diameter of\/ $G$ is less than $31(k+1)^2$.
\end{enumerate}
\end{corollary}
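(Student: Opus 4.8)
The plan is to deduce Corollary~\ref{cor:19} from Theorem~\ref{main_thm} together with two elementary facts about tree-width: that the minimum degree of any graph is at most its tree-width, and that a graph of tree-width at most $w$ has a \emph{balanced separator}, i.e.\ a set $S$ with $|S|\le w+1$ such that every component of $G-S$ has at most $\tfrac12|V(G)\setminus S|$ vertices. If $\mathrm{tw}(G)\ge k$ we are in case~(ii), so suppose $\mathrm{tw}(G)\le k-1$. Since $G$ is vertex transitive it is $d$-regular, and $d$ equals its minimum degree, hence $d\le\mathrm{tw}(G)\le k-1$; this already establishes the degree bound of (iii). It therefore remains to prove that if $\diam(G)\ge 31(k+1)^2$ then conclusion~(i) holds, and this is where Theorem~\ref{main_thm} enters.

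So assume $\diam(G)\ge 31(k+1)^2$. Fix a balanced separator $S$ with $|S|\le k$; note $V(G)\ne S$, since otherwise $|V(G)|\le k$ and $\diam(G)\le k-1$. For any component $C$ of $G-S$ put $A=C$: then $G[A\cup\partial A]$ is connected (each vertex of $\partial A\subseteq S$ is adjacent to the connected set $C$), $1\le|A|\le\tfrac12|V(G)|$, and $k_C:=|\partial A|\le|S|\le k$, so $\diam(G)\ge 31(k+1)^2\ge 31(k_C+1)^2$ and Theorem~\ref{main_thm} applies to $A$. If it yields conclusion~(ii), then $G$ is $(s,t)$-ring-like with $st\le k_C/2\le k/2$, so $2st\le k$ and we have conclusion~(i) of the corollary; thus we may assume Theorem~\ref{main_thm}(i) holds for every component of $G-S$, so that each component $C$ satisfies $\depth(C)\le k_C\le k$ and $|C|\le 2k_C^3+k_C^2\le 2k^3+k^2$. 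The same reasoning applies verbatim with $A$ replaced by any ball $B(v,r)$ with $|B(v,r)|\le\tfrac12|V(G)|$ and $|\partial B(v,r)|\le k$: since $\depth(B(v,r))\ge\dist_G(v,V(G)\setminus B(v,r))=r+1$, conclusion~(i) of Theorem~\ref{main_thm} forces $r\le k-1$. Consequently, as long as conclusion~(i) of the corollary does not already hold, the growth function of $G$ increases by at least $k+1$ at every radius $r\ge k$ for which $B(v,r)$ is still at most half of $V(G)$.

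The one remaining step — and the step I expect to be the real obstacle — is to convert this local information into a contradiction with $\diam(G)\ge 31(k+1)^2$. All the pieces are present: every vertex of $G$ lies within distance $k$ of the bounded set $S$ (each component of $G-S$ has depth $\le k$); $G-S$ has fewer than $k^2$ components (as $G$ is $(k-1)$-regular, only $|S|(k-1)<k^2$ edges leave $S$, and each component meets $S$); each component has at most $2k^3+k^2$ vertices and, via Theorem~\ref{diam_bound_thm}, small diameter; and a geodesic between two vertices at distance $\diam(G)$ meets $S$ at most $k$ times, hence splits into at most $k+1$ sub-geodesics, each confined to a single component. Any one of these observations already bounds $\diam(G)$ by a polynomial in $k$, which would suffice for a qualitative version of the corollary; the delicate bookkeeping is to make the bound come out below the precise threshold $31(k+1)^2$, for which I would combine the ``at least $k+1$ per step'' growth estimate above with the antipodal inequality $|B(v,r)|+|B(v,\rho-1-r)|\le|V(G)|$, where $\rho=\mathrm{ecc}(v)\ge\tfrac12\diam(G)$. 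Once the diameter bound is in hand it contradicts our standing assumption, completing the proof; I expect everything outside this last estimate to be routine, being a direct application of Theorem~\ref{main_thm} and of the two standard tree-width facts.
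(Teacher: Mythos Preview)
Your overall strategy is sound and would go through, but you stopped just short of the one-line argument that finishes it, while believing that step to be the hard one. Once every component $C$ of $G-S$ satisfies $\depth(C)\le k$, every vertex of $G$ lies within distance $k$ of some $s\in S$. On a geodesic $P$ realising $\diam(G)$, the vertices at distance $\le k$ from a fixed $s\in S$ form a subpath with at most $2k+1$ vertices; these $|S|\le k$ subpaths cover $V(P)$, so $\diam(G)+1\le k(2k+1)$ and hence $\diam(G)\le 2k^2+k-1<31(k+1)^2$, the desired contradiction. Your proposed routes through sub-geodesics and Theorem~\ref{diam_bound_thm} give only an $O(k^3)$ bound and would not close the gap; the paragraph on balls $B(v,r)$ and the growth function is unnecessary.

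The paper's proof uses the same ingredients but arranges them more directly. Instead of taking a separator balanced with respect to $V(G)$ and arguing by contradiction, it applies the Balanced Separator Lemma to a set $W$ equal to a geodesic of length $31(k+1)^2$, obtaining $S$ with $|S|\le k$ such that no component of $G-S$ contains more than $\tfrac12|W|$ vertices of $W$. The very covering argument above (at most $k(2k+1)<\tfrac12|W|$ vertices of $W$ are within distance $k$ of $S$) is then used \emph{proactively}: it shows that at least two components of $G-S$ contain a vertex of $W$ at distance $>k$ from $S$, hence have depth $>k$. Taking $A$ to be the smaller of these two components gives $|A|\le\tfrac12|V(G)|$ and $\depth(A)>k$, so case~(i) of Theorem~\ref{main_thm} is ruled out immediately and case~(ii) yields the ring-like structure. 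In short, both arguments rest on the same covering estimate; the paper uses it to \emph{produce} a deep component, while you use it to \emph{exclude} the situation where none exists.
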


The proof is given in the last section. It is easy to see that in the first case of Corollary \ref{cor:19}, the tree-width of $G$ is less than $k$. Let us observe that in the last case of
Corollary \ref{cor:19}, the degree of $G$ and the diameter of $G$ are both bounded in terms of $k$. Hence, the order of $G$ is bounded in terms of $k$, say $|V(G)|\le s(k)$. Consequently, $G$ is $(s(k),0)$-ring-like (and the tree-width of $G$ is less than $s(k)$).

The key tool we use to prove our main theorem is a structural lemma on
vertex transitive graphs which appears to be of independent interest.  Before stating
this lemma, we require another definition.  A finite subset $A \subseteq V(G)$
is called an $(s,t)$-\DEF{tube} if $G[A\cup \partial A]$
is connected, and there is a partition
of $\partial A$ into $\{L,R\}$ (with $L,R\ne\emptyset$) so that
$\dist_G(x,y) \le s$ whenever $x,y \in L$ or $x,y \in R$ and
$\dist_G(x,y) \ge t$ whenever $x \in L$ and $y \in R$.
Any partition satisfying this property is called a \DEF{boundary} partition.

\begin{lemma}[Tube Lemma]
\label{tube_lem}
Let $G$ be a vertex transitive graph.
If\/ $G$ has a $(k,3k+6)$-tube $A$ with boundary partition $\{L,R\}$ and
$\depth(V(G) \setminus A)) \ge k+1$,
then there exists a pair of integers $(s,t)$
and a cyclic system $\vec{\sigma}$ so that $G$ is $(s,t)$-ring-like
with respect to $\vec{\sigma}$, and $st \le \min\{|L|,|R|\}$.
\end{lemma}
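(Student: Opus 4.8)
The plan is to exploit vertex-transitivity to slide the tube $A$ along $G$, producing a doubly-infinite (or, when $G$ is finite, cyclic) chain of translates of $A\cup\partial A$ whose union is all of $V(G)$ and in which consecutive translates overlap in a controlled region; the cyclic system $\vec\sigma$ is then read off from these overlaps together with the translates of the small set $L$.

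First I would produce the shift. Using the hypothesis that some vertex $v_0\notin A$ has $\dist_G(v_0,A)\ge k+1$, together with the facts that $\diam(L),\diam(R)\le k$ while $\dist_G(L,R)\ge 3k+6$, one can choose $\varphi\in\mathrm{Aut}(G)$ carrying the left wall $L$ into the region $A\cup R$, as far toward $R$ as the tube structure allows, so that $\varphi(A\cup\partial A)$ is a copy of the tube ``shifted to the right'': it meets $A\cup\partial A$ only in a bounded set near $R\cup\varphi(L)$, its two copies of the tube agree on that overlap, and it contains every vertex adjacent to $R$ that lies outside $A\cup\partial A$. The generous slack $3k+6$ in the tube condition is exactly what makes this ``clean overlap'' possible, since it forces $\varphi(L)$ to land strictly between the two walls. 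Setting $A_i=\varphi^{i}(A\cup\partial A)$ for $i\in\ZZ$, consecutive terms overlap, so $\bigcup_i A_i$ is connected; were it a proper subset of $V(G)$ there would be an edge leaving it out of some $A_j$, necessarily from a vertex of $\varphi^{j}(\partial A)$, and the clean-overlap property would place its other endpoint in $A_{j-1}$ or $A_{j+1}$, a contradiction. Hence the $A_i$ cover $V(G)$. The depth hypothesis keeps the chain nondegenerate (each shift advances by a positive amount, and $\varphi$ has infinite order when $G$ is infinite), so when $G$ is infinite the index set is genuinely $\ZZ$, and when $G$ is finite the chain closes up into a cycle $A_0,\dots,A_{n-1}$.

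Next I would extract the cyclic system. The $\mathrm{Aut}(G)$-invariant object I would use is the equivalence relation $u\equiv v$ defined by ``$u$ and $v$ belong to exactly the same translates of $L$ in $G$'', which in a genuinely ring-like graph recovers the blocks. Because the family of all translates of $L$ is $\mathrm{Aut}(G)$-invariant, so is $\equiv$; the point is then to show, using the covering chain and the rigidity forced by the clean-overlap property, that every translate of $L$ sits at a well-defined position along the chain, so that the $\equiv$-classes are uniformly bounded, have a common size $s$, and inherit from their positions a cyclic order $\vec\sigma$ preserved (up to reversal) by \emph{every} automorphism. That the classes are blocks of imprimitivity is then immediate; two adjacent vertices of $G$ lie in a common translate of the tube and hence in classes whose positions differ by a bounded amount $t$, which is precisely the statement that $G$ is $(s,t)$-ring-like with respect to $\vec\sigma$. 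Finally, for $st\le\min\{|L|,|R|\}$ I would argue that a wall of a tube such as $L$ cannot separate $A$ from the rest of $G$ on its side unless it contains a full cross-section of the ring — a complete block — at each of at least $t$ consecutive positions, giving $|L|\ge st$; the same argument with $R$ finishes the proof.

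The step I expect to be the main obstacle is the passage from the (easy but choice-dependent) covering chain to a genuine, $\mathrm{Aut}(G)$-invariant system of imprimitivity with equal block sizes and an invariant cyclic order: one must prove the decomposition is canonical, and in particular that the cyclic order is respected by every automorphism rather than merely by $\varphi$ and its powers. A secondary difficulty is the bookkeeping with the constants — checking that the slack $3k+6$ and the depth bound $k+1$ are exactly enough to force the clean overlap and the covering property in both the finite and infinite cases, and to make the inequality $st\le\min\{|L|,|R|\}$ come out — but this should be routine once the qualitative picture is in place.
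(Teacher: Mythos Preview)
Your outline correctly isolates the right picture---translates of the tube chained along $G$---and the first paragraph is close to what the paper does (though the paper does not use powers of a single $\varphi$; it chooses each shift separately and proves that \emph{any} automorphism sending the tube's midpoint near a prescribed vertex forces a ``merge'').  But the second paragraph, which you yourself flag as the main obstacle, is where the proposal breaks.  Your equivalence relation ``$u\equiv v$ iff $u$ and $v$ lie in exactly the same translates of $L$'' is indeed $\mathrm{Aut}(G)$-invariant, but you give no argument that its classes are finite, equal-sized, or cyclically ordered; in fact nothing prevents it from being the identity relation.  The assertion that ``every translate of $L$ sits at a well-defined position along the chain'' is exactly the hard content: an arbitrary automorphism need not respect your chosen $\varphi$-chain at all, and proving that it must is essentially the whole lemma.

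The paper's route is genuinely different.  In the infinite case it shows (via the merging chain) that $G$ has two ends and then invokes a separate structure theorem for two-ended vertex-transitive graphs, whose proof passes through Dunwoody's cut theorem (or the authors' self-contained substitute based on the ``narrow part'' functions $L(x),R(x)$ and an additive invariant $\beta$).  In the finite case it builds an infinite $\ZZ$-cover of $G$ via a voltage assignment determined by the tube, proves that this cover is vertex-transitive by showing the voltage maps $\mu_\phi$ are all equivalent, applies the two-ended theorem upstairs, and then pushes the cyclic system back down.  None of this machinery appears in your sketch, and I do not see how to avoid something of comparable strength.

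Your final step is also off.  The bound $st\le\min\{|L|,|R|\}$ is \emph{not} obtained by arguing that $L$ contains $t$ consecutive full blocks; rather, one shows $\kappa_\infty(G)\le|L|$ (since $L$ separates two infinite pieces in the cover) and separately proves $\kappa_\infty=st$ for \emph{tightly} $(s,t)$-ring-like graphs.  That equality requires a careful uncrossing argument and is not the one-line observation you describe; in particular, a separator need only meet each of $t$ consecutive blocks, which naively gives $|L|\ge t$, not $|L|\ge st$.
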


The proof is postponed until Section \ref{sect:tube lemma}.

This lemma is also meaningful for groups (although the assumptions are more natural in the context of graphs): If $G$ is a Cayley graph for a group ${\mathcal G}$ and $G$ has a tube which satisfies the assumptions of the Tube Lemma, then $G$ is $(s,t)$-ring-like and it follows that ${\mathcal G}$ has a normal subgroup $N$ (of size $\le s$) so that ${\mathcal G}/N$ is either cyclic or dihedral.

\section{Uncrossing}

The main tool we use in the proofs of the Tube Lemma and our main theorem is
a simple uncrossing argument.  Indeed, this was the main tool
used to prove Theorems \ref{edge-con_thm}, \ref{vertex-con_thm}, and
\ref{digraph-con_thm} as well.  This argument is probably easiest to understand
with the help of a diagram, so we introduce one in Figure \ref{merge_fig}.
Here it is understood that $A_1,A_2$ are subsets of the vertex-set of a graph $G$, and the sets $P,Q,S,T,U,W,X,Y,Z$ are defined by the diagram. For example, $Q = \partial A_1 \cap A_2$,
$X = A_1\cap (V(G)\setminus (A_2\cup\partial A_2))$, etc.
For convenience, we will frequently refer back to this diagram.

\begin{figure}
\begin{center}
\includegraphics[width=60mm]{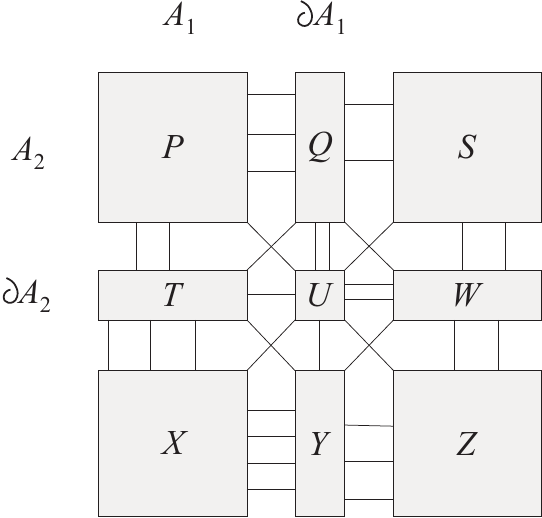}
\end{center}
\caption{The diagram for the uncrossing lemma. Edges between the sets are only possible where indicated.}
\label{merge_fig}
\end{figure}

\begin{lemma}[Uncrossing]
\label{uncrossing_lem}
Let $G$ be a graph, let $A_1,A_2 \subseteq V(G)$ and let the sets
$P$, $Q$, $S$, $T$, $U$, $W$, $X$, $Y$, $Z$ be defined as in Figure \ref{merge_fig}.
Then we have:
\begin{enumerate}[label={\rm (\roman{*}) }, ref={\rm (\arabic{*})}]
\item   
    $|\partial P| + |\partial (P \cup Q \cup S \cup T \cup X)|
    \le |\partial A_1| + |\partial A_2|$.
\smallskip
\item   
    $|\partial S| + |\partial X| \le |\partial A_1| + |\partial A_2|$.
\smallskip
\item   If\/ $|\partial A_2| = |\partial P| = |\partial S| = k$, then
    $|Q\cup U| = |\partial A_1 \cap (A_2 \cup \partial A_2)| \ge \frac{k}{2}$.
\end{enumerate}
\end{lemma}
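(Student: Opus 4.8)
The plan is to reduce the whole lemma to elementary set bookkeeping about the nine regions of Figure~\ref{merge_fig}. Recall that, writing $R_i=V(G)\setminus(A_i\cup\partial A_i)$ for $i=1,2$, the nine regions are the pairwise-disjoint sets $P=A_1\cap A_2$, $Q=\partial A_1\cap A_2$, $S=R_1\cap A_2$, $T=A_1\cap\partial A_2$, $U=\partial A_1\cap\partial A_2$, $W=\partial A_1\cap R_2$, $X=A_1\cap R_2$, $Y=R_1\cap\partial A_2$, $Z=R_1\cap R_2$, whose union is $V(G)$. In particular $\partial A_1=Q\cup U\cup W$, $\partial A_2=T\cup U\cup Y$, and $P\cup Q\cup S\cup T\cup X=A_2\cup A_1$. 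The only two facts about boundaries I will use are: (a) if $v$ is adjacent to a vertex of a set $Y_0\subseteq V(G)$ then $v\in Y_0\cup\partial Y_0$; and (b) no vertex of $A_i$ is adjacent to a vertex of $R_i$ — immediate from the definition of $\partial A_i$, and exactly the ``edges only where indicated'' constraint of the figure.

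First I would establish four containments. If $v\in\partial P$, then $v$ has a neighbour in $A_1\cap A_2\subseteq A_1$, so by (a) $v\in A_1\cup\partial A_1$; it also has a neighbour in $A_2$, so $v\in A_2\cup\partial A_2$; and $v\notin P$. Intersecting the first two memberships picks out the regions $P,Q,T,U$, so $\partial P\subseteq Q\cup T\cup U$. If $v\in\partial(A_1\cup A_2)$, then $v$ has a neighbour in $A_1$ or in $A_2$, so by (a) $v\in\partial A_1\cup\partial A_2$, while $v\notin A_1\cup A_2$; deleting the regions contained in $A_1\cup A_2$ from $\partial A_1\cup\partial A_2=Q\cup U\cup W\cup T\cup Y$ leaves $U\cup W\cup Y$, so $\partial(A_1\cup A_2)\subseteq U\cup W\cup Y$. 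If $v\in\partial X$ with $X=A_1\cap R_2$, a neighbour $u$ of $v$ lies in $A_1\cap R_2$: by (a), $v\in A_1\cup\partial A_1$, and by (b) applied to $A_2$ (since $u\in R_2$), $v\notin A_2$, i.e.\ $v\in\partial A_2\cup R_2$; since $v\notin X$, this forces $v\in T\cup U\cup W$. The argument for $\partial S$, $S=R_1\cap A_2$, is the transpose (interchange the roles of $A_1$ and $A_2$), giving $\partial S\subseteq Q\cup U\cup Y$.

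Parts (i) and (ii) are then immediate, using only that the nine regions are pairwise disjoint. The first two containments give
\[
|\partial P|+|\partial(A_1\cup A_2)|\le(|Q|+|T|+|U|)+(|U|+|W|+|Y|)=(|Q|+|U|+|W|)+(|T|+|U|+|Y|)=|\partial A_1|+|\partial A_2|,
\]
which is (i) since $P\cup Q\cup S\cup T\cup X=A_1\cup A_2$; and the last two give $|\partial X|+|\partial S|\le(|T|+|U|+|W|)+(|Q|+|U|+|Y|)=|\partial A_1|+|\partial A_2|$, which is (ii). For (iii), assume $|\partial A_2|=|\partial P|=|\partial S|=k$. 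The containments $\partial P\subseteq Q\cup T\cup U$ and $\partial S\subseteq Q\cup U\cup Y$ give $k\le|Q|+|T|+|U|$ and $k\le|Q|+|U|+|Y|$; adding them and using $|T|+|U|+|Y|=|\partial A_2|=k$ yields $2k\le 2(|Q|+|U|)+(|T|+|Y|)\le 2|Q\cup U|+k$, so $|Q\cup U|\ge k/2$. Finally $Q\cup U=(\partial A_1\cap A_2)\cup(\partial A_1\cap\partial A_2)=\partial A_1\cap(A_2\cup\partial A_2)$, completing (iii).

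I do not expect a genuine obstacle: the entire lemma is routine once one commits to reading the configuration as the $3\times3$ partition. The only points requiring care are to make sure the right-hand sides of the four containments name exactly the correct regions, and to note that it is precisely the disjointness of the regions that lets the cardinalities add with no slack — which is also what makes the hypothesis of (iii) bite, via the two ``$\ge k$'' inequalities above.
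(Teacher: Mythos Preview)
Your proof is correct and follows essentially the same route as the paper's: both reduce everything to the containments $\partial P\subseteq Q\cup T\cup U$, $\partial(A_1\cup A_2)\subseteq U\cup W\cup Y$, $\partial S\subseteq Q\cup U\cup Y$, $\partial X\subseteq T\cup U\cup W$, and then add cardinalities using disjointness of the nine regions. Your derivation of (iii) is the same double-count as the paper's, just arranged slightly differently; in fact your stated containments for $\partial S$ and $\partial X$ are the correct ones (the paper's text swaps $W$ and $Y$ there, a harmless typo since the summed bound is unaffected).
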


\begin{proof}
Let us first observe that there are no edges from $P,Q,S$ to $X,Y,Z$ and
no edges from $P,T,X$ to $S,W,Z$. Therefore,
$\partial (P \cup Q \cup S \cup T \cup X) \subseteq U\cup W\cup Y$,
$\partial P \subseteq Q\cup U\cup T$, and
$\partial A_1 \cap \partial A_2 = U$. Then apply the inclusion-exclusion formula
to get (i).

For (ii), we similarly use the fact that $\partial S \subseteq Q \cup U \cup W$
and $\partial X \subseteq T \cup U \cup Y$.

To prove (iii), observe that $\partial A_1 \cap (A_2 \cup \partial A_2) = Q\cup U$.
Now,
$$
    |\partial P| + |\partial S| \le |T| + 2|Q\cup U| + |W| =
    |Q\cup U| + |\partial A_2| + |Q|.
$$
This implies that $|Q\cup U| + |Q| \ge k$, so $|Q\cup U|\ge \tfrac{k}{2}$.
\end{proof}

\section{Two-Ended Graphs}

The purpose of this section is to establish a theorem which gives us some detailed
structural information about vertex transitive graphs with two ends.  The main
tool we use is a corollary of an important theorem of Dunwoody.  However, since
Dunwoody's proof is rather tricky, and we have a proof of this corollary which
we consider to be more transparent, we have included it here.  This also has the
advantage of keeping the present article entirely self-contained.  Before stating
the main theorem from this section, we require some further definitions.

If $G$ is a graph, a \DEF{ray} in $G$ is a one-way-infinite path.
Two rays $r,s$ in a graph $G$
are \DEF{equivalent} if for any finite set of vertices $X$, the (unique) component
of $G \setminus X$ which contains infinitely many vertices of $r$ also
contains infinitely many vertices of
$s$.  This relation is immediately seen to be an equivalence relation, and the
corresponding equivalence classes are called the \DEF{ends} of the graph $G$.
By a theorem of Hopf \cite{Hop}
and Halin \cite{Hal}, every connected vertex transitive graph has either one, two, or infinitely many ends. We let
$$
   \kappa_{\infty}(G) =
      \inf \{|S| \,:\, S \subseteq V(G)
      \mbox{ and $G \setminus S$ has $\ge 2$ infinite components} \}.
$$
So $\kappa_{\infty}(G)$ is finite if and only if $G$ has at least two ends.

If $G$ is a graph which is ring-like with respect to the cyclic system $\vec{\sigma}$, then we say that $G$ is $q$-\DEF{cohesive} if any two vertices of $G$ which are in the same block of $\vec{\sigma}$ or in adjacent blocks of $\vec{\sigma}$ can be joined by a path of length at most $q$.  We are now ready to state the main result from this section.

\begin{theorem}
\label{two_ends_thm}
Let\/ $G$ be a connected vertex transitive graph with two ends.  Then there exist integers $s,t$ and a cyclic system $\vec{\sigma}$ so that $G$ is $(s,t)$-ring-like and $2st$-cohesive with respect to $\vec{\sigma}$, and $\kappa_{\infty}(G) = st$.
\end{theorem}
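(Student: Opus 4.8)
The plan is to use the hypothesis that $G$ has exactly two ends to produce a finite vertex-set $S$ with $|S| = \kappa_\infty(G)$ whose removal leaves exactly two infinite components; by a standard argument (essentially Menger/Dunwoody-type), among all such optimal separators we may choose one that is ``tight'' on both sides. Concretely, since $G$ has two ends, a finite $S$ of minimum size $st := \kappa_\infty(G)$ exists so that $G \setminus S$ has two infinite components $C_L$ and $C_R$; write $A = S \cup C_L$ (or rather pick the side we will call the tube). The first step is to show that we may take $A$ to be an $(s,t)$-\emph{tube} in the sense of the paper: the boundary $\partial A$ splits into $L$ and $R$ lying ``near'' the two ends, with bounded internal diameter and large mutual distance. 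To get this, I would iterate the uncrossing Lemma \ref{uncrossing_lem}: given two translates $A_1 = \varphi(A)$, $A_2 = \psi(A)$ of the minimum separator configuration, parts (i) and (ii) of that lemma let us replace $A_1, A_2$ by ``uncrossed'' sets of no larger boundary, and minimality forces all the inequalities to be tight, which is exactly the situation of part (iii). Part (iii) then yields that each boundary piece of $A$ contains at least $\tfrac12 \kappa_\infty(G)$ vertices; pushing translates far apart along the two ends shows the two pieces can be taken at arbitrarily large distance, giving the tube.

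The second step is to feed this tube into the Tube Lemma (Lemma \ref{tube_lem}). Because $G$ has two ends, the two infinite components on either side of $S$ give sets of unbounded depth, so for a suitably chosen translate the hypothesis $\depth(V(G)\setminus A) \ge k+1$ is satisfied, and by making the two boundary pieces $L, R$ far enough apart we satisfy the $(k, 3k+6)$-tube condition with $k$ on the order of $\kappa_\infty(G)$. The Tube Lemma then immediately delivers a cyclic system $\vec{\sigma}$ with respect to which $G$ is $(s,t)$-ring-like for some pair $(s,t)$ with $st \le \min\{|L|,|R|\}$. Combining with the lower bound $\min\{|L|,|R|\} \ge \tfrac12\kappa_\infty(G)$ from the uncrossing step and the obvious upper bound (a single block together with its at most $2st$ neighbours across a ``cut'' of the cyclic order separates the two ends, so $\kappa_\infty(G) \le st$ — more precisely, one checks that the union of $t$ consecutive blocks has boundary contained in $2$ groups of $\le st$ blocks, hence a separator of size $st$ exists), we pin down $\kappa_\infty(G) = st$ exactly. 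The equality forces the cyclic order to be a two-way-infinite path (a finite circuit would make $G$ finite, hence one-ended or finite), consistent with two-endedness.

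The third step is the cohesiveness claim. Here I would argue that once $G$ is $(s,t)$-ring-like with $st = \kappa_\infty(G)$, vertices in one block or in adjacent blocks lie in a set of $\le 2t+1$ consecutive blocks, whose total size is bounded; using vertex-transitivity and connectivity of $G[A \cup \partial A]$ for the tube sets, any two such vertices are joined by a path staying within a bounded window of blocks, and a counting/diameter estimate (in the spirit of Theorem \ref{diam_bound_thm} applied locally, or just a direct BFS bound inside the window) caps the length of such a path by $2st$. The main obstacle I anticipate is the second step's bookkeeping: one must choose the translate of $A$ and the parameter $k$ so that \emph{simultaneously} the depth condition, the $(k,3k+6)$-tube spacing, and the finiteness/size constraints all hold, and then reconcile the pair $(s,t)$ produced abstractly by the Tube Lemma with the actual minimum separator size — i.e. proving the sandwich $\tfrac12\kappa_\infty(G) \le \min\{|L|,|R|\}$ and $st \le \kappa_\infty(G)$ collapses to $st = \kappa_\infty(G)$ rather than merely $st \le \kappa_\infty(G) \le 2st$. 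Extracting the exact equality (rather than a constant-factor estimate) is where the argument will need the most care, likely again via a minimality/tightness argument on the uncrossing inequalities.
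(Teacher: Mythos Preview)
Your plan has a fatal circularity: in this paper the Tube Lemma is proved \emph{using} Theorem \ref{two_ends_thm}, not the other way around. In both Case 1 (infinite $G$) and Case 2 (finite $G$, via the cover $\tilde{G}$) of the proof in Section \ref{sect:tube lemma}, the argument reduces to a vertex transitive graph with two ends and then invokes Theorem \ref{two_ends_thm} to obtain the ring-like and $2st$-cohesive structure together with the bound $st \le \min\{|L|,|R|\}$. So feeding a tube into Lemma \ref{tube_lem} cannot be a step in the proof of Theorem \ref{two_ends_thm} here; you would be assuming what you are trying to prove.

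The paper's actual proof is structured quite differently and does not go through tubes at all. It first establishes Corollary \ref{dunwoody_cor} (a cyclic system with finite blocks exists), then Lemma \ref{tight_lem} refines this to a \emph{tightly} $(s,t)$-ring-like cyclic system; Lemma \ref{connectivity_lem} proves $\kappa_{\infty}(G)=st$ directly from tightness by an uncrossing/minimality argument on blocks; and Lemma \ref{cohesive_lem} proves $2st$-cohesiveness via a covering-space argument (build a two-way-infinite path from a shift $\alpha$, pass to the quotient $H = G/\langle\alpha\rangle$ of order $2st$, and control the lift using Lemma \ref{lem:cycles}). Even setting the circularity aside, you correctly identify that your sandwich $st \le \min\{|L|,|R|\}$ and $\kappa_{\infty}(G) \le st$ only yields $st \le \kappa_{\infty}(G)$ after you know $\min\{|L|,|R|\} \le \kappa_{\infty}(G)$, which is not automatic from your setup; and your proposed cohesiveness argument (``a BFS bound inside a window of $2t+1$ blocks'') does not give the sharp constant $2st$, which in the paper requires the covering argument and the tightness of the ring-like structure.
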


An important tool used to establish this result is Corollary \ref{dunwoody_cor}
below, which follows from the following strong result of Dunwoody.

\begin{theorem}[Dunwoody \cite{Dun}]
Let\/ $G$ be an infinite connected vertex transitive graph.
If there exists a finite edge-cut $\delta X$ of\/ $G$ so that both\/ $X$ and\/
$V(G) \setminus X$ are infinite, then there exists such an edge-cut $\delta Z$
with the additional property that for every automorphism $\phi$ of $G$
either $Z$ or $V(G) \setminus Z$ is included in
either $\phi(Z)$ or $\phi(V(G) \setminus Z)$.
\end{theorem}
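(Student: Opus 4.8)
The plan is to follow the uncrossing philosophy of Section~2, but applied to \emph{minimum} cuts, and to produce the desired $Z$ by a descent argument on the number of crossing translates. Write $k=\kappa_\infty(G)$, which is finite by hypothesis, and call $X\subseteq V(G)$ a \emph{minimum cut} if $|\delta X|=k$ and both $X$ and $V(G)\setminus X$ are infinite; let $\mathcal M$ be the (nonempty, $\mathrm{Aut}(G)$-invariant) family of all minimum cuts. Say that $X,Y\in\mathcal M$ \emph{cross} if all four corners $X\cap Y$, $X\setminus Y$, $Y\setminus X$ and $V(G)\setminus(X\cup Y)$ are nonempty; the condition in the theorem is precisely that $Z$ crosses none of its translates $\phi(Z)$. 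Since a nested orbit gives the conclusion with $Z$ any of its members, it suffices to exhibit a single minimum cut that crosses none of its translates. Throughout I would use the submodularity of the edge-boundary, $|\delta(X\cap Y)|+|\delta(X\cup Y)|\le|\delta X|+|\delta Y|$, together with its ``diagonal'' form $|\delta(X\setminus Y)|+|\delta(Y\setminus X)|\le|\delta X|+|\delta Y|$ (apply the first inequality to $X$ and $V(G)\setminus Y$).

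First I would establish a corner lemma: if $X,Y\in\mathcal M$ cross, then one of the two diagonal pairs of corners lies entirely in $\mathcal M$. The point is a short case analysis on which corners are infinite. Since $X$, $V(G)\setminus X$, $Y$ and $V(G)\setminus Y$ are all infinite, and each is a union of two corners, a quick check shows that at least one opposite pair, say $C=X\cap Y$ and $C'=V(G)\setminus(X\cup Y)$, has both members infinite (so both have infinite complements as well). Each such corner has $\delta C\subseteq\delta X\cup\delta Y$ finite, so minimality of $k$ forces $|\delta C|\ge k$ and $|\delta C'|\ge k$, while submodularity gives $|\delta C|+|\delta C'|\le 2k$; hence $C,C'\in\mathcal M$. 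Thus crossing cuts can always be ``uncrossed'' into a strictly smaller opposite pair of minimum cuts, with $C\subsetneq X$.

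Next I would prove a finiteness lemma: a fixed minimum cut $X$ is crossed by only finitely many of its translates. Let $W$ be the finite set of endpoints of the $k$ edges of $\delta X$; then every component of $G-W$ lies entirely in $X$ or entirely in $V(G)\setminus X$. If a translate $Y=\phi(X)$ has $\delta Y$ far from $W$, then $\delta Y$ is disjoint from all but finitely many of these components, forcing one of $X$, $V(G)\setminus X$ into a single side of $Y$, so some corner is empty and $Y$ does not cross $X$. Hence crossing forces $\delta(\phi X)$ to lie within a bounded distance of $W$, and local finiteness leaves only finitely many $k$-edge sets, and hence finitely many translates, available.

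Finally I would run the descent. Define $m(X)$ to be the number of translates of $X$ that cross $X$; by the finiteness lemma $m\colon\mathcal M\to\mathbb Z_{\ge0}$ is well defined, so its image is a nonempty set of nonnegative integers and $m$ attains a minimum at some $X_0\in\mathcal M$ (minimizing over \emph{all} of $\mathcal M$, not over a single orbit, on which $m$ is constant). If $m(X_0)=0$ the orbit of $X_0$ is nested and we are done. Otherwise some translate $Y$ crosses $X_0$; uncrossing produces an opposite pair $C,C'\in\mathcal M$ with $C\subsetneq X_0$, and I would argue that $m(C)<m(X_0)$, contradicting minimality. I expect this last inequality to be the main obstacle: one must show, by a careful uncrossing count, that passing from $X_0$ to the corner $C$ destroys the crossing with $Y$ without creating a net excess of new crossing translates. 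This is the delicate combinatorial heart of Dunwoody's theorem; the reductions above---the submodular corner lemma, the finiteness of crossers, and the extremal choice---are routine scaffolding, and everything hinges on making the crossing-number genuinely decrease under uncrossing.
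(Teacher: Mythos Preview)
The paper does not prove this theorem at all: it is quoted from Dunwoody's paper \cite{Dun} and used only as background. What the paper actually proves is the weaker Corollary~\ref{dunwoody_cor} (existence of a cyclic system with finite blocks on a two-ended vertex transitive graph), and it does so by a completely different, direct argument: it defines for each vertex $x$ the minimal narrow parts $L(x),R(x)$ capturing the two ends, builds an additive cocycle $\beta(x,y)=\beta^R(x,y)-\beta^L(x,y)$, and takes the level sets of $\beta$ as the blocks. No nested cut is ever produced, and Dunwoody's theorem is bypassed entirely. So there is no ``paper's own proof'' of the statement you attempted.

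As for your proposal itself, it follows the classical outline (submodularity corner lemma, finiteness of crossers, extremal choice), and those preliminary steps are correctly sketched. But you explicitly stop at the one step that carries all the content: showing that passing from $X_0$ to a corner $C$ strictly decreases the crossing count $m$. You write that you ``expect'' $m(C)<m(X_0)$ and call this ``the delicate combinatorial heart''; that is an accurate assessment, but it means the proof is not complete. Note in particular that $C$ need not lie in the $\mathrm{Aut}(G)$-orbit of $X_0$, so $m(C)$ counts translates of $C$, not of $X_0$; to compare $m(C)$ with $m(X_0)$ you would need an injection from the translates $\psi(C)$ crossing $C$ into the translates of $X_0$ crossing $X_0$, excluding $Y$. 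Producing such an injection is exactly the nontrivial combinatorics in Dunwoody's paper (and in the Thomassen--Woess treatment), and nothing in your sketch supplies it. Your finiteness lemma is also stated too loosely: ``$\delta Y$ far from $W$'' needs to be made precise (e.g.\ via the observation that if $\phi(X)$ crosses $X$ then $\delta(\phi X)$ must meet every component of $G\setminus \delta X$ that is split by $\phi(X)$, forcing an edge of $\delta(\phi X)$ within bounded distance of $\delta X$), though this part is routine once formulated carefully.
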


\begin{corollary}[Dunwoody]
\label{dunwoody_cor}
If\/ $G$ is a connected vertex transitive graph with two ends, then there exists a cyclic system $\vec{\sigma}$ on $G$ with finite blocks.
\end{corollary}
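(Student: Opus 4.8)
The plan is to feed Dunwoody's theorem a finite edge-cut coming from the two ends, use the $\mathrm{Aut}(G)$-orbit of the resulting nested cut to slice $V(G)$ into pieces, and then check that these pieces form a cyclic system with finite blocks.

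\textbf{Getting a nested cut.} Since $G$ has two ends, $\kappa_\infty(G)<\infty$, so there is a finite edge-cut $\delta X$ with $X$ and $V(G)\setminus X$ both infinite; passing to the unique infinite component on each side (there are finitely many components since $\delta X$ is finite, and at most one infinite one on each side, else $G$ would have a third end) we may assume $G[X]$ and $G[V(G)\setminus X]$ are connected. Dunwoody's theorem applied to $\delta X$ yields a finite edge-cut $\delta Z$, still with $Z$ and $\bar Z:=V(G)\setminus Z$ infinite (and, after the same clean-up, $G[Z],G[\bar Z]$ connected — this can be arranged within Dunwoody's construction or by a short separate argument), such that every $\phi\in\mathrm{Aut}(G)$ carries one of $Z,\bar Z$ into one of $\phi(Z),\phi(\bar Z)$. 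Applying this to $\psi^{-1}\phi$ and translating by $\psi$ shows that the $\mathrm{Aut}(G)$-invariant family of ``sides'' $\mathcal C=\{\phi(Z):\phi\}\cup\{\phi(\bar Z):\phi\}$ is \emph{nested}: any two members $C,D$ satisfy $C\subseteq D$, $C\subseteq\bar D$, $\bar C\subseteq D$, or $\bar C\subseteq\bar D$.

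\textbf{The partition and the cyclic order.} Declare $u\approx v$ when $u$ and $v$ lie on the same side of every cut of $\mathcal C$ (that is, $u\in C\iff v\in C$ for all $C\in\mathcal C$); this is an equivalence relation, and since $\mathrm{Aut}(G)$ preserves $\mathcal C$ as a set it preserves $\approx$, so the classes form a system of imprimitivity $\sigma$. To orient $\sigma$ cyclically I would study the poset $(\mathcal C,\subseteq)$: because $G$ has exactly two ends, exactly one end lies on the $Z$-side and one on the $\bar Z$-side, so any two sides containing a common end are comparable, and $\mathcal C$ splits into two chains glued by complementation. I would then argue $(\mathcal C,\subseteq)$ is \emph{discrete} — every side has an immediate predecessor and successor — so that it is order-isomorphic to the family of proper initial and final segments of a two-way-infinite path. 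In that picture each block of $\sigma$ is exactly a ``gap'' $C^+\setminus C$ between a side $C$ and its immediate successor $C^+$ (no side can sit strictly inside a gap, since its complement would then be a side strictly between $C$ and $C^+$), so listing the gaps in order gives a linear order on $\sigma$. Every element of $\mathrm{Aut}(G)$ either preserves or reverses this order, so the symmetric relation ``$B,B'$ are consecutive blocks'' is $\mathrm{Aut}(G)$-invariant, and its graph is a two-way-infinite path (a circuit would force $\sigma$ finite, which two ends forbid) — a cyclic order in the sense of the paper.

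\textbf{Finiteness of blocks — the main obstacle.} What remains, and carries the real content, is that each gap $C^+\setminus C$ is finite; this goes hand in hand with the discreteness of $\mathcal C$ used above. I would derive it from two-endedness: $\delta Z$ is finite, hence so are $\delta C$ and $\delta C^+$, and $G[C],G[C^+]$ are connected, so an infinite gap $C^+\setminus C$ would be an infinite vertex-set with finite boundary avoiding fixed representatives of both ends of $G$, producing a third end — a contradiction. The same idea, phrased through a translation $\tau$ along the axis of $\mathcal C$ (for which one checks $\bigcap_n\tau^n(Z)=\emptyset$ and $\bigcup_n\tau^n(Z)=V(G)$ because $G$ has two ends), simultaneously rules out a strictly decreasing chain of sides with nonempty limit, giving discreteness. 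Making this dichotomy precise — together with the reduction to connected cuts in the first step and the exact matching of the two ends of $G$ with the two directions of $\mathcal C$ — is where the work concentrates; once it is in place, every block of $\sigma$ is finite and the corollary follows.
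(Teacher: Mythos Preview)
Your approach is valid --- this is the route the paper has in mind when it presents the result as a corollary of Dunwoody's theorem --- but the paper's actual proof is deliberately different and self-contained, avoiding Dunwoody altogether. Instead of taking the orbit of one nested cut and reading off blocks as gaps in the resulting chain, the paper assigns to each vertex $x$ two canonical narrow parts $L(x),R(x)$ (intersections of all narrow parts through $x$ capturing a fixed end), then builds an integer cocycle $\beta(x,y)=\beta^R(x,y)-\beta^L(x,y)$ where $\beta^R(x,y)=|R(x)\setminus R(y)|-|R(y)\setminus R(x)|$. Inclusion--exclusion gives $\beta(x,y)+\beta(y,z)=\beta(x,z)$ and equivariance gives $\beta(\phi x,\phi y)=\pm\beta(x,y)$; the blocks are the level sets $\{\beta=0\}$, the cyclic order comes from minimally positive $\beta$-values, and finiteness of blocks is a direct metric estimate showing $\beta(x,y)\neq 0$ once $\dist(x,y)$ is large. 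Your argument buys conceptual transparency once Dunwoody is granted; the paper's buys independence from that (tricky) theorem and an explicit height function on $V(G)$. One remark on the step you flag as delicate: discreteness of the chain does not fall out of a translation $\tau$ quite as you suggest (deducing $\bigcup_n\tau^n(Z)=V(G)$ from two-endedness alone requires more care), but it does follow easily once you observe that for $C\subsetneq D$ in the $\mathcal R$-chain the slab $D\setminus C$ is finite (it has finite boundary and meets neither end), so only finitely many $|\delta Z|$-edge cuts --- hence only finitely many sides --- can sit strictly between $C$ and $D$.
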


We call a subset $X$ of vertices a \DEF{part} if both $X$ and
$V(G) \setminus X$ are infinite but $\partial X$ is finite.
If $X$ is a part, and $\epsilon$ is an end, then we say that $X$ \DEF{captures} $\epsilon$ if every ray in $\epsilon$ has all but finitely many vertices in $X$.
We call $X$ a \DEF{narrow} part if $|\partial X| = \kappa_{\infty}(G)$.

If $G$ is a vertex transitive graph with two ends, then every automorphism $\phi$ of $G$ either maps each end to itself, or interchanges the two ends.  We call automorphisms of the first type \DEF{shifts} and automorphisms of
the second type \DEF{reflections}.
Define a map ${\mathit sign} : Aut(G) \rightarrow \{-1,1\}$ by the rule that
${\mathit sign}(\phi) = 1$ if $\phi$ is a shift and ${\mathit sign}(\phi) = -1$ if $\phi$ is a reflection.

Now we are ready to provide a self-contained proof of Dunwoody's
Corollary \ref{dunwoody_cor}.

\begin{proof}[Proof of Corollary \ref{dunwoody_cor}]
Let us denote the ends of $G$ by ${\mathcal L}$ and ${\mathcal R}$.
It follows from the uncrossing lemma that
whenever $P,Q$ are narrow parts that capture ${\mathcal L}$, then
$P \cap Q$ and $P \cup Q$ are also narrow parts that also capture ${\mathcal L}$.
More generally, the set of narrow parts that capture ${\mathcal L}$ is
closed under finite intersections.  Note that by vertex transitivity every vertex is contained in a narrow part that captures ${\mathcal L}$ and a narrow part that captures ${\mathcal R}$.
For every $x \in V(G)$, let $L(x)$ $(R(x))$ be the intersection of all narrow parts which contain $x$ and capture ${\mathcal L}$ $({\mathcal R})$.
We claim that $L(x)$ is a narrow part. Clearly, $V(G)\setminus L(x)$ is infinite.
If $L(x)$ were finite, let $Y$ be the set of vertices at distance 1 or 2 from
$L(x)$. For each $y\in Y$, there is a narrow part $P(y)$ that contains $x$, captures
${\mathcal L}$, and does not contain $y$. Since $Y$ is finite, the intersection
$T = \cap_{y\in Y} P(y)$ is also a narrow part that
contains $L(x)$ but no other point at distance $\le 2$ from this set. Therefore,
$|\partial (T \setminus L(x))| = |\partial T|-|\partial L(x)| < |\partial T|$.
Since $T\setminus L(x)$ is also a part, the last inequality contradicts
the fact that $T$ is a narrow part. This shows that $L(x)$ is infinite.

Similarly, if $|\partial L(x) | > \kappa_{\infty}(G)$, then there exists a finite
set of narrow parts containing $x$ and capturing ${\mathcal L}$ with intersection
$T$ and $|\partial T| > \kappa_{\infty}(G)$,
a contradiction.  Thus, $L(x)$, and similarly $R(x)$, is a narrow part.

Next, define a map $\beta^R : V(G) \times V(G) \rightarrow \ZZ$
by the rule $\beta^R(x,y) = |R(x) \setminus R(y)| - |R(y) \setminus R(x)|$.
Let $x,y,z \in V(G)$, and define the following values:
\begin{eqnarray*}
a &=& |R(x) \setminus (R(y) \cup R(z))| \\
b &=& |(R(x) \cap R(y)) \setminus R(z)| \\
c &=& |R(y) \setminus (R(z) \cup R(x))| \\
d &=& |(R(y) \cap R(z)) \setminus R(x)| \\
e &=& |R(z) \setminus (R(y) \cup R(x))| \\
f &=& |(R(z) \cap R(x)) \setminus R(y)|
\end{eqnarray*}
Now we have that $\beta^R(x,y) + \beta^R(y,z)
    = (a + f) - (c + d) + (b + c) - (e + f) = (a + b) - (d + e) = \beta^R(x,z)$.
Define $\beta^L : V(G) \times V(G) \rightarrow \ZZ$ by the similar rule
$\beta^L(x,y) = |L(x) \setminus L(y)| - |L(y) \setminus L(x)|$ and observe that
$\beta^L(x,y) + \beta^L(y,z) = \beta^L(x,z)$ holds.  Next, define
$\beta : V(G) \times V(G) \rightarrow \ZZ$ by setting
$\beta(x,y) = \beta^R(x,y) - \beta^L(x,y)$ and note again that
$\beta(x,y) + \beta(y,z) = \beta(x,z)$.
If $\phi \in Aut(G)$ and $x \in V(G)$, then either ${\mathit sign}(\phi) = 1$,
$\phi(L(x)) = L(\phi(x))$, and $\phi(R(x)) = R(\phi(x))$, or ${\mathit sign}(\phi) = -1$,
$\phi(L(x)) = R(\phi(x))$, and $\phi(R(x)) = L(\phi(x))$.  It follows that
$\beta(x,y) = {\mathit sign}(\phi) \beta( \phi(x),\phi(y) )$ holds for
every $x,y \in V(G)$.

Now, define two vertices $x,y$ to be \DEF{equivalent} if $\beta(x,y) = 0$.
Note that this is an equivalence relation preserved by the automorphism group.
Let $\sigma$
be the corresponding system of imprimitivity.  If $B,B' \in \sigma$, then $\beta(x,x')$ has the same value for every $x \in B$ and $x' \in B'$ and we define $\beta(B,B')$ to be this value.  Next, define a relation on
$\sigma$ as follows.  For any block $B \in \sigma$, there is a unique block $B'$ for which
$\beta(B,B')$ is minimally positive.
Include $(B,B')$
in our relation.  It follows immediately that this relation imposes a cyclic order which is preserved by any automorphism of the graph, so we have a cyclic system $\vec{\sigma}$ as desired.

It remains to show that the blocks of $\sigma$ are finite. Let
$$
   F(x) = \partial L(x)\cup \partial R(x)\cup
          (V(G)\setminus(\partial L(x)\cup \partial R(x))).
$$
Observe that $F(x)$ is finite. Set $h = \max\{\dist(x,z)\mid z\in F(x)\}$.
Note that $h$ is independent of $x\in V(G)$ since $G$ is vertex transitive and since
every automorphism $\phi$ of $G$ maps $L(x)$ and $R(x)$ onto $L(\phi(x))$ and
$R(\phi(x))$ (possibly interchanging $L$ and $R$).
If $\dist(x,y) > 2h$, then it is easy to see that either $y\in L(x)$
and $x\in R(y)$, or $y\in R(x)$ and $x\in L(y)$. Assuming the former,
$L(x)$ is a narrow part containing $y$ and capturing ${\mathcal L}$, so
$L(y)\subseteq L(x)$. Since $\dist(x,y) > 2h$, we have
$\dist(\partial L(x),y) > h$, hence $\partial L(y)\subseteq L(x)$.
Thus
$$
   \beta^L(x,y) = |L(x)\setminus L(y)| \ge |\partial L(y)| > 0.
$$
Similarly, since $x\in R(y)$, we conclude that
$R(x)\cup \partial R(x)\subseteq R(y)$, and consequently
$$
   \beta^R(x,y) = - |R(y)\setminus R(x)| \le - |\partial R(x)| < 0.
$$
Finally, this yields that $\beta(x,y)\ne 0$ and shows that the blocks of
$\sigma$ are finite. This completes the proof.
\end{proof}

If $G$ is a vertex transitive graph with two ends, then we may define a relation $\sim$ on
$V(G)$ by the rule $x \sim y$ if there exists a shift $\phi \in Aut(G)$ with $\phi(x) = y$.  It is immediate from the definitions that $\sim$ is an equivalence relation preserved by $Aut(G)$, and we let $\tau$ denote the corresponding system of imprimitivity.  Since the product of two reflections is a shift, $|\tau| \le 2$.  We define $G$ to be \DEF{Type $i$} if $|\tau| = i$.  Graphs of Type 1 will be easiest to work with, since in this case we have shifts taking any vertex to any other vertex.  If $G$ is a graph of Type 2, then we view $\tau$ as
a (not necessarily proper) 2-coloring of the vertices.
In this case, every shift fixes both color classes, and every
reflection interchanges them.
An example of a Type 2 graph is illustrated in Figure~\ref{fig:2}.

\begin{figure}
\begin{center}
\includegraphics[width=80mm]{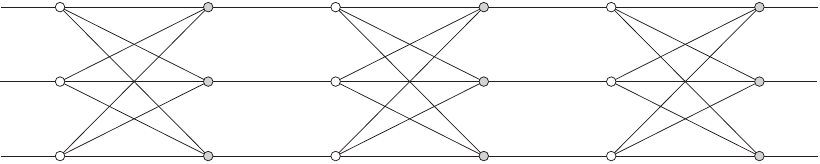}
\end{center}
\caption{A vertex transitive graph of Type 2}
\label{fig:2}
\end{figure}

If $X,Y$ are disjoint subsets of $V(G)$, we say that $X$ and $Y$ are
\DEF{neighborly} if every point in $X$ has a neighbor in $Y$ and every point in $Y$ has a neighbor in $X$.  We say that $G$ is \DEF{tightly $(s,t)$-ring-like} with respect to $\vec{\sigma}$ if $G$ is $(s,t)$-ring-like with respect to
$\vec{\sigma}$, and further, every pair of blocks in $\vec{\sigma}$ at distance $t$ are neighborly.

\begin{lemma}
\label{tight_lem}
If\/ $G$ is a connected vertex transitive graph with two ends, then there exist integers $s,t$, and a cyclic system $\vec{\sigma}$ so that $G$ is tightly
$(s,t)$-ring-like with respect to $\vec{\sigma}$.
\end{lemma}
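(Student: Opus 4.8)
The starting point is Dunwoody's Corollary~\ref{dunwoody_cor}: fix a cyclic system $\vec{\sigma}$ on $G$ with finite blocks. Since $\sigma$ is a system of imprimitivity and $G$ is vertex transitive, all blocks of $\sigma$ have a common size $s$; and since $G$ has two ends it is infinite, so the cyclic order on $\sigma$ is a two-way-infinite path and we may enumerate the blocks as $(B_i)_{i\in\ZZ}$ with $B_i$ adjacent to $B_{i\pm1}$ in $\vec{\sigma}$. Every automorphism of $G$ permutes the $B_i$ respecting the cyclic order --- a shift preserving its orientation, a reflection reversing it --- and in either case preserves the cyclic distance $|i-j|$. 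Because $G$ is vertex transitive and locally finite, the finitely many neighbors of any vertex lie in blocks whose displacement from the vertex's own block ranges over a fixed finite set (the same for every vertex, up to negation), so the integer $t=\max\{\,|i-j| : \text{some edge of }G\text{ joins }B_i\text{ to }B_j\,\}$ is well defined, and by construction $G$ is $(s,t)$-ring-like with respect to $\vec{\sigma}$.

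It remains to upgrade this to \emph{tightly} $(s,t)$-ring-like --- every pair of blocks at cyclic distance exactly $t$ neighborly --- possibly after replacing $\vec{\sigma}$ by a coarser cyclic system. Call a vertex $v$ \emph{forward-long} if it has a neighbor in the block $t$ steps ahead of its own block, and \emph{backward-long} if it has one in the block $t$ steps behind. Mapping an edge that realizes $t$ onto an arbitrary vertex $v$ by vertex transitivity shows that every vertex is forward-long or backward-long; and if $vw$ witnesses that $v$ is forward-long, then $v$ lies in the block $t$ steps behind $w$, so $w$ is backward-long. As a shift preserves and a reflection reverses the orientation of $\vec{\sigma}$, the pair of properties (forward-long, backward-long) is constant on each orbit of the shift subgroup and is interchanged by reflections.

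Suppose $G$ is of Type~1. Then the shift subgroup is transitive, so all vertices share the same forward/backward-long status; were it ``forward-long only'', the backward-long neighbor produced above would contradict it, and symmetrically, so every vertex is both forward-long and backward-long. Hence for any blocks $B_i,B_{i+t}$ at cyclic distance $t$, every vertex of $B_i$ has a neighbor in $B_{i+t}$ and every vertex of $B_{i+t}$ has a neighbor in $B_i$; since every distance-$t$ pair of blocks is of this form, $G$ is already tightly $(s,t)$-ring-like.

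The case where $G$ is of Type~2 is the crux. Here the shift subgroup has two orbits --- the two classes of the $2$-colouring $\tau$, interchanged by reflections --- and it can happen that one class is forward-long only and the other backward-long only, in which case $\vec{\sigma}$ itself need not be tightly ring-like (this already occurs for a suitable $3$-regular graph on $\ZZ$). The plan is to pass to a coarser cyclic system $\vec{\sigma}'$ whose blocks are unions of consecutive $\sigma$-blocks, grouped so that each new block meets both $\tau$-classes and so that the forward/backward-long dichotomy is absorbed --- each vertex then reaches the new block immediately ahead and the new block immediately behind, using its long edge one way and short edges the other --- and then to apply the Type~1 argument to $\vec{\sigma}'$. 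The main obstacle is the bookkeeping for this coarsening: one must verify that the chosen grouping of the $B_i$ is invariant under all of $Aut(G)$ (using that in Type~2 the shift displacements along $\vec{\sigma}$ and the ``centres'' of the reflections are suitably aligned --- otherwise $Aut(G)$ would preserve a proper subcollection of blocks, contradicting transitivity), and that after coarsening the blocks at the new maximal distance are genuinely neighborly, ruling out a residual ``polarized'' configuration which, if it occurred, would in fact force $G$ to be of Type~1.
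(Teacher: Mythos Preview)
Your Type~1 argument is correct and essentially matches the paper's: shifts act on the indices by translation, so ``forward-long'' and ``backward-long'' are shift-invariant, and in Type~1 transitivity of shifts forces every vertex to be both. This is the paper's argument in slightly different language.

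The Type~2 case, however, is only a plan, and the plan as stated has a real gap. You propose to \emph{coarsen} $\vec{\sigma}$ so that each new block meets both $\tau$-classes, and then ``apply the Type~1 argument''. But the Type~1 argument rests on the shift subgroup acting transitively on $V(G)$; this is false in Type~2 regardless of which cyclic system you use, so that step does not go through. What you actually need is that for each block $B$ of the cyclic system, the automorphisms fixing $B$ setwise act transitively on $B$ --- this is what makes ``there is one edge between $B_i$ and $B_j$'' upgrade to ``$B_i$ and $B_j$ are neighborly''. Coarsening moves you away from this property, not toward it.

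The paper's route is in the opposite direction: first \emph{refine} $\sigma$ by $\tau$ (taking blocks $X_i\cap Y_j$) so that each block is monochromatic; then, since any two vertices in a block have the same $\tau$-colour, there is a shift sending one to the other, and that shift necessarily fixes every block. Now neighborliness follows from a single edge, and the remaining problem is purely combinatorial: for which displacements does $X_0$ actually see an edge? The paper handles this by exploiting an extra degree of freedom --- sliding the odd-indexed blocks past the even-indexed ones (the ``shift even blocks $2k$ steps'' trick), which preserves $Aut(G)$-invariance --- to normalise the odd displacements, and then a short case analysis (on $t_0$, $t_1^+$, $t_1^-$) either finishes directly or reduces to merging consecutive pairs $X_{2i}\cup X_{2i+1}$. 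Your sketch does not contain the refinement idea, the sliding trick, or the displacement bookkeeping, and without the first of these it is unclear how any coarsening alone could yield neighborliness at the maximal distance.
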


\begin{proof}
By Corollary \ref{dunwoody_cor} we may choose a cyclic system $\vec{\sigma}$ where
$\sigma = \{ X_i : i \in \ZZ\}$ and the cyclic order is $\ldots, X_{-1},X_0,X_1,\ldots$.
Set $s$ to be the size of a block of $\sigma$ and $t$ to be the largest integer so that there exist adjacent vertices which lie in blocks at distance $t$.  Then, $G$ is $(s,t)$-ring-like with respect to~$\vec{\sigma}$.

First suppose that $G$ is Type 1 and choose $i \in \ZZ$ so that
$E[X_i,X_{i+t}] \neq \emptyset$.  Then every point in $X_i$ must have a neighbor in $X_{i+t}$ since there exists a shift taking any point in $X_i$ to any other point in this block, and such a map must fix $X_{t+i}$.  Similarly, every point in $X_{i+t}$ has a neighbor in $X_i$, so $X_i$ and $X_{i+t}$ are neighborly.
For every $j$, there exists a shift which sends $X_i$ to $X_j$, so $X_j$ and $X_{j+t}$ are also neighborly.  It follows from this that $G$ is tightly $(s,t)$-ring-like with respect to~$\sigma$.

Thus, we may assume that $G$ is of Type 2, and we let $\tau = \{Y_1,Y_2\}$ be the corresponding system of imprimitivity.
If $\sigma$ is not a refinement of $\tau$, then
$\{ X_i \cap Y_j : \mbox{$i \in \ZZ$ and $j \in \{1,2\}$}\}$
is a system of imprimitivity.
For $Z_{2j} = X_j \cap Y_1$, $Z_{2j+1} = X_j \cap Y_2$ ($j\in\ZZ$),
the cyclic ordering $\dots,Z_{-2},Z_{-1},Z_0,Z_1,Z_2,\dots$
is preserved by $Aut(G)$.  Thus, by possibly adjusting $\vec{\sigma}$ and $s$ and $t$, we may assume that $\sigma$ is a refinement of $\tau$.  In particular, for every $x,y \in X_i$ there exists an automorphism sending $x$ to $y$ which fixes every block of $\vec{\sigma}$ (since every shift sending $x$ to $y$ has this property).  So $X_i$ and $X_j$ are neighborly whenever $E[X_i,X_j] \neq \emptyset$.

Note that we may modify the cyclic order on $\sigma$ by ``shifting the even blocks $2k$ steps to the right", replacing $X_{2i}$ by $X_{2i-2k}$ for every $i \in \ZZ$ to obtain a new cyclic ordering which is preserved by $Aut(G)$.
Set $t_0 = \sup \{ i \in 2\ZZ : E[X_0,X_i] \neq \emptyset \}$ (if such $i \in 2\ZZ$
does not exist, let $t_0=0$), set $t^-_1 = \min \{ j \in 2\ZZ + 1 : E[X_0,X_j] \neq \emptyset \}$ and set $t^+_1 = \max \{j \in 2\ZZ + 1 : E[X_0,X_j] \neq \emptyset \}$.  Since there must be a block with odd index joined to $X_0$, $t_1^+$ and $t_1^-$ both exist. By shifting even blocks, we may further assume that either $t^+_1 = - t^-_1$ or $t_1^+ = 2 - t_1^-$.  If
$t_0 \ge t_1^+$, then $t=t_0$ and $E[X_i,X_{i+t}] \neq \emptyset$ for every $i \in \ZZ$ so $X_i$ and $X_{i+t}$ are neighborly for every $i \in \ZZ$ and we are done.  Similarly, if $t_1^+ = -t_1^- > t_0$, then $t = t_1^+$ and $X_i$ and $X_{i+t}$ are neighborly for every $i \in \ZZ$ and we are done.  The only remaining possibility is that
$t = t_1^+ = 2 - t_1^- > t_0$.  In this case, set $\sigma' = \{ X_{2i} \cup X_{2i+1} : i \in \ZZ \}$.  Then
$\sigma'$ is a system of imprimitivity, $\ldots, X_{-2} \cup X_{-1}, X_0 \cup X_1, X_2 \cup X_3, \ldots$ is  a cyclic order preserved by $Aut(G)$, and by construction, $G$ is tightly
$(2s,\frac{t-1}{2})$-ring-like with respect to $\sigma'$ and this ordering.
\end{proof}

\begin{lemma}
\label{connectivity_lem}
If\/ $G$ is an infinite connected vertex transitive graph that is tightly
$(s,t)$-ring-like, then $\kappa_{\infty}(G) = st$.
\end{lemma}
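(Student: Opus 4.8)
The plan is to prove the two inequalities $\kappa_{\infty}(G) \le st$ and $\kappa_{\infty}(G) \ge st$ separately. Since $G$ is infinite, the cyclic order on the blocks of $\vec{\sigma}$ is a two-way-infinite path, so I would write $\sigma = \{X_i : i \in \ZZ\}$ with consecutive order $\ldots, X_{-1}, X_0, X_1, \ldots$, where each $|X_i| = s$, every edge of $G$ joins blocks at distance at most $t$, and, by tightness, every pair of blocks at distance exactly $t$ is neighborly. A vertex of $X_i$ reaches in $n$ steps only vertices of blocks $X_j$ with $|i-j| \le nt$, so $G$ has linear growth and hence (being infinite, connected and vertex transitive) exactly two ends; fix notation so that all left tails $P_m := \bigcup_{i \le m} X_i$ capture the same end $\mathcal{L}$, while their complements capture $\mathcal{R}$. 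Finally recall that $\kappa_{\infty}(G)$ equals the minimum of $|\partial C|$ over all $C$ for which $C$ and $V(G) \setminus C$ are infinite and $\partial C$ is finite.

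For the upper bound I would compute $\partial P_m$ directly. A block $X_j$ with $j > m$ lies within distance $t$ of $P_m$ only if $j \le m + t$, so $\partial P_m \subseteq X_{m+1} \cup \cdots \cup X_{m+t}$; conversely, for $m < j \le m+t$ the block $X_j$ is at distance exactly $t$ from $X_{j-t} \subseteq P_m$, so by tightness every vertex of $X_j$ has a neighbor in $P_m$, that is, $X_j \subseteq \partial P_m$. Hence $\partial P_m = X_{m+1} \cup \cdots \cup X_{m+t}$ has exactly $st$ vertices, and since $P_m$ and $V(G) \setminus P_m$ are both infinite we get $\kappa_{\infty}(G) \le st$.

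For the reverse inequality it suffices to show $|\partial C| \ge st$ whenever $C$ and $V(G)\setminus C$ are infinite and $\partial C$ is finite; replacing $C$ by its complement, assume $C$ captures $\mathcal{L}$. The first step is that $C$ differs from $P_0$ by a finite set: the set $C \cap (V(G) \setminus P_0)$ lies inside $C$ and inside $V(G) \setminus P_0$ and has finite boundary, so if it were infinite with infinite complement it would be a part, hence would capture exactly one of the two ends, which would force $C$ or $V(G)\setminus P_0$ to capture both ends --- impossible in a two-ended graph; therefore $C \setminus P_0$ is finite, and symmetrically $P_0 \setminus C$ is finite. In particular $P_a \subseteq C \subseteq P_b$ for $a$ sufficiently negative and $b$ sufficiently positive, so $\partial C$ separates a left tail $\bigcup_{i \le a} X_i$ from a right tail $\bigcup_{i > b'} X_i$ in $G$. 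The second step is to prove that any vertex set separating a left tail from a right tail has at least $st$ vertices. Here I would produce $st$ pairwise vertex-disjoint paths running around the ring from the left tail to the right tail: for $r \in \{0,1,\ldots,t-1\}$ the blocks $X_{a-r}, X_{a-r+t}, X_{a-r+2t}, \ldots$ all lie in indices congruent to $a-r$ modulo $t$, so the $t$ chains obtained this way are mutually vertex-disjoint; inside one chain the full neighborliness of consecutive (distance-$t$) blocks provided by tightness, together with the regularity forced by vertex transitivity, yields $s$ vertex-disjoint paths from $X_{a-r}$ into the right tail. The $t$ chains together give $st$ disjoint left-to-right paths, each of which must meet $\partial C$, so $|\partial C| \ge st$, completing the proof.

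The step I expect to be the main obstacle is the construction of the $st$ disjoint paths --- equivalently, ruling out a separator of size less than $st$. The difficulty is that the bipartite graph between two blocks at distance exactly $t$ is a priori only known to have no isolated vertex, so it need not contain a perfect matching on its own; one must invoke vertex transitivity (which makes this bipartite graph biregular once one works with the canonical cyclic system, where the pointwise stabilizer of the blocks acts transitively on each block) to obtain the matchings from which the disjoint paths are assembled. A preliminary technical point that I would settle first is that each tail $P_m$ induces a connected subgraph of $G$; this is false for ring-like graphs in general but holds here because $G$ is connected and vertex transitive, and it is what legitimizes the reformulation that $\partial C$ separates a left tail from a right tail.
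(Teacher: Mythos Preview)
Your upper bound is fine and essentially matches the paper's. The lower bound, however, is approached very differently: you go via Menger (build $st$ pairwise disjoint left-to-right paths through the chains of blocks at distance $t$), whereas the paper uses an uncrossing/minimality argument to force a minimum separator to be a union of blocks and then counts.

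The gap is exactly where you suspected, and your proposed resolution does not work. You assert that the bipartite graph between $X_i$ and $X_{i+t}$ is biregular because ``the pointwise stabilizer of the blocks acts transitively on each block.'' That transitivity is false in general. In the paper's terminology, when $G$ is of Type~2 (the subgroup of shifts has two orbits $Y_1,Y_2$) and the given system $\sigma$ is not a refinement of $\{Y_1,Y_2\}$, every automorphism fixing all blocks setwise is a shift, and shifts only permute each $X_i\cap Y_j$ transitively, not all of $X_i$. A reflection sending $x\in X_i\cap Y_1$ to $y\in X_i\cap Y_2$ fixes $X_i$ but sends $X_{i+t}$ to $X_{i-t}$, so it tells you $\deg_{X_{i+t}}(x)=\deg_{X_{i-t}}(y)$, not $\deg_{X_{i+t}}(x)=\deg_{X_{i+t}}(y)$. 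Hence the $X_i$--$X_{i+t}$ bipartite graph need not be biregular, and your matching argument is not justified. Your fallback of ``working with the canonical cyclic system'' does not save the proof of this lemma as stated: passing to a finer system changes $s$ and $t$, so you would only establish $\kappa_\infty(G)=s't'$ for some other pair.

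This is precisely why the paper abandons the disjoint-paths route here. Instead it chooses $A$ with $|\partial A|=\kappa_\infty(G)$ minimizing a secondary quantity, and uses uncrossing with a block-preserving shift to show $A$ (and $\partial A$) must be a union of blocks of $\sigma$ in the easy case, or of the refinement $\sigma'=\{X_i\cap Y_j\}$ in the Type~2 case; a residue-class argument then gives $|\partial A|\ge st$ directly, with a further case split (parallel versus crossed neighborliness between the halves of $X_0$ and $X_t$) in Type~2. If you want to rescue the Menger approach, you would need to reproduce this Type~1/Type~2 analysis to obtain the matchings, at which point the argument is no longer simpler than the paper's.
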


\begin{proof}
It is immediate that $\kappa_{\infty}(G) \le st$ as the removal of $t$ consecutive blocks of size $s$ leaves a graph with at least two infinite components.  Thus, it suffices to show that $st \le \kappa_{\infty}(G)$.

Assume that $G$ is tightly $(s,t)$-ring-like with respect to $\vec{\sigma}$
where $\sigma = \{X_i : i \in \ZZ\}$ and the cyclic order is given by $\ldots,X_{-1},X_0,X_1,\ldots$.  Next, choose $A \subseteq V(G)$ with $A$ and $V(G) \setminus A$ infinite so that

(i) $|\partial A|$ is minimum;

(ii) $T = \{ y \in V(G) \setminus A : \sigma_y \cap A \neq \emptyset \} $ is minimal subject to (i), where $\sigma_y$ denotes the block of $\sigma$ containing $y$.

It follows from our assumptions that $|\partial A| = \kappa_{\infty}(G)$ is finite.
Further, since there is a fixed upper bound on the maximum distance between two vertices
in the same block of $\sigma$, the set $T$ is finite.  Suppose (for a contradiction)
that there exist points $x,y$ in the same block of $\sigma$ and a shift $\phi \in Aut(G)$
so that $x \in A$, $y \not\in A$, and so that $\phi(x) = y$.  Then $\phi$ must fix every
block of $\sigma$, so the symmetric difference of $A$ and $\phi(A)$ is finite.  By
uncrossing (Lemma \ref{uncrossing_lem}), we have
$|\partial (A \cap \phi(A))| + |\partial (A \cup \phi(A))| \le 2|\partial A|$.  But then
it follows from (i) that $|\partial (A \cup \phi(A))| = |\partial A|$ and we see that
$A \cup \phi(A)$ contradicts our choice of $A$ for (ii).  Thus, no such $x,y,\phi$ can exist.

Now suppose that there are shifts taking any point in a block of $\sigma$ to any other point in this block.  It then follows from the above argument that both $A$ and
$\partial A$ are unions of blocks of $\sigma$.
Let $Q_k = \cup_{i \in \ZZZ} X_{it + k}$ for every $0 \le k \le t-1$.
Then $Q_k \cap \partial A$ must include a block of $\sigma$
for every $0 \le k \le t-1$ so $\kappa_{\infty}(G) = |\partial A| \ge st$ as desired.

Thus, we may assume that there exist $x,y \in X_0$ so that no shift maps $x$ to $y$.
So, $G$ is Type 2, and setting $\tau = \{Y_1,Y_2\}$ to be the corresponding 2-coloring,
we find that $\sigma' = \{ X_i \cap Y_j : \mbox{$i \in \ZZ$ and $j \in \{1,2\}$ } \}$
is a proper refinement of $\sigma$ and $\tau$.  Furthermore, it follows from our earlier
analysis that both $A$ and $\partial A$ are unions of blocks of $\sigma'$.  It follows
from the assumption that $G$ is tightly $(s,t)$-ring-like that either
$X_0 \cap Y_j$ and $X_t \cap Y_j$ are neighborly for $j=1,2$ or that
$X_0 \cap Y_j$ and $X_t \cap Y_l$ are neighborly whenever $\{j,l\} = \{1,2\}$.  In the
former case, setting $Q_k^j = \cup_{i \in \ZZZ} (X_{it + k} \cap Y_j)$ for $0 \le k \le t-1$
and $j = 1,2$ we find that $\partial A \cap Q_k^j$ includes a block of $\sigma'$ for
every $0 \le k \le t-1$ and $j=1,2$ so $\kappa_{\infty}(G) = |\partial A| \ge st$
as desired.  In the latter case, setting
$Q_k^j = \cup_{i \in \ZZZ} (X_{it+k} \cap Y_{(j+i)\pmod 2})$ for $0 \le k \le t-1$ and
$j = 1,2$ we find that $\partial A \cap Q_k^j$ includes a block of $\sigma'$ for
every $0 \le k \le t-1$ and $j = 1,2$.
Again, this implies that $\kappa_{\infty}(G) = |\partial A| \ge st$
and we are finished.
\end{proof}

In the next proof we will need a simple lema about short paths in finite graphs.

\begin{lemma}
\label{lem:cycles}
Let\/ $H$ be a finite connected graph, possibly with multiple edges, and let
$Q_1,\dots,Q_h$ be pairwise disjoint cycles in $H$. For every $x,y\in V(H)$,
there exists an $(x,y)$-path $P$ in $H$ such that for every $i$ $(1\le i\le h)$
the intersection $P\cap Q_i$ is either empty or a segment of $Q_i$
containing at most half of the edges of $Q_i$.
\end{lemma}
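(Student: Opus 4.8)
The plan is to argue by induction on the number $h$ of cycles, with a clean geometric rerouting at each step. The base case $h=0$ is immediate: since $H$ is connected, any $(x,y)$-path exists. For the inductive step, suppose the claim holds for $h-1$ cycles. First I would apply the inductive hypothesis to the cycles $Q_1,\dots,Q_{h-1}$ to obtain an $(x,y)$-path $P'$ whose intersection with each $Q_i$ ($1\le i\le h-1$) is empty or a segment containing at most half the edges of $Q_i$. If $P'$ already meets $Q_h$ in a segment of at most half its edges (in particular if it misses $Q_h$ or meets it in a single vertex), we are done. Otherwise, $P'\cap Q_h$ consists of one or more subpaths of $Q_h$; I want to reroute $P'$ around the ``long side'' of $Q_h$.

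The key step is the rerouting. Orient $P'$ from $x$ to $y$ and let $a$ be the first vertex of $P'$ on $Q_h$ and $b$ the last. The two arcs of $Q_h$ between $a$ and $b$ have lengths summing to $|E(Q_h)|$, so one of them, call it $R$, has at most half the edges of $Q_h$. Replace the portion of $P'$ strictly between $a$ and $b$ by the arc $R$; call the resulting $x$–$y$ walk $W$. By construction $W\cap Q_h\subseteq R$ uses at most $\lfloor |E(Q_h)|/2\rfloor$ edges of $Q_h$, and it is a segment of $Q_h$ (a subpath of the arc $R$). Since $R$ is internally disjoint from every $Q_i$ with $i<h$ — here is where disjointness of the cycles is essential — the intersections $W\cap Q_i$ for $i<h$ are exactly the parts of those intersections lying in the initial segment of $P'$ up to $a$ and the final segment from $b$ onward, hence are subsegments of the segments $P'\cap Q_i$, so they still use at most half the edges of $Q_i$. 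Finally, extract from the walk $W$ a genuine $(x,y)$-path $P$ by deleting closed subwalks; this only shrinks each intersection $W\cap Q_i$ to a sub-arc (a contiguous piece of $W$ restricted to $Q_i$ stays contiguous after removing detours that leave and return at the same vertex), so the ``at most half the edges'' condition is preserved for all $i\le h$.

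The main obstacle I anticipate is the bookkeeping in the last two sentences: one must check that after rerouting and after reducing the walk to a path, the intersection with each $Q_i$ remains a \emph{single} segment rather than splitting into several pieces. The disjointness of the $Q_i$ is what makes this work — because $R$ touches no other cycle, rerouting through $R$ cannot create a new component of intersection with $Q_i$ ($i<h$), and because the cycles are vertex-disjoint, pruning a closed subwalk that begins and ends at some vertex $v$ either removes a sub-arc of $W\cap Q_i$ entirely or leaves it untouched (it cannot punch a hole in the middle, as that would force the closed subwalk to reenter and exit $Q_i$, i.e.\ to revisit $Q_i$ elsewhere, contradicting that $P'$, and hence $W$, meets $Q_i$ in a connected segment). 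Making this last point fully rigorous — e.g.\ by choosing $P$ to be a \emph{shortest} walk inside the subgraph traced by $W$, or by a careful induction on the number of repeated vertices — is the one place where care is needed; everything else is routine.
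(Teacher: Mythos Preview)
Your argument is correct and is essentially the same as the paper's: both take the first and last vertices $a,b$ of the path on a given cycle and replace the $(a,b)$-portion by the shorter arc of that cycle, using disjointness of the cycles to see that no other $Q_i$ is disturbed. The paper simply iterates this fix over badly traversed cycles rather than phrasing it as induction on~$h$.

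One simplification: your worry about reducing a walk $W$ to a path is unnecessary. Since $a$ is the \emph{first} vertex of $P'$ on $Q_h$ and $b$ the \emph{last}, the initial segment of $P'$ (from $x$ to $a$) meets $Q_h$ only at $a$, the final segment only at $b$, and the arc $R\subseteq Q_h$ is therefore internally disjoint from both; hence $W$ is already a path. Likewise, because $a,b\in Q_h$ and the cycles are vertex-disjoint, the contiguous sub-path $P'\cap Q_i$ (for $i<h$) cannot contain $a$ or $b$, so it lies entirely before $a$, entirely between $a$ and $b$, or entirely after $b$; thus $W\cap Q_i$ is either exactly $P'\cap Q_i$ or empty, and no ``splitting into pieces'' can occur. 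With this observation the last paragraph of bookkeeping is not needed.
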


\begin{proof}
Let $P$ be an $(x,y)$-path. We say that the cycle $Q_i$ is
\DEF{badly traversed} by $P$ if $P\cap Q_i$ is not as claimed.
Suppose now that $Q_i$ is badly traversed, and let $U=V(P\cap Q_i)$.
Let $a,b$ be the first and the last vertex, respectively, on $P$ that belongs
to $Q_i$. If we replace the $(a,b)$-segment of $P$ by a shortest
$(a,b)$-segment on $Q_i$, we get another $(x,y)$-path $P'$. Clearly, $P'$ does
not introduce any new badly traversed cycles among $Q_1,\dots,Q_h$, and
repairs bad traversing of $Q_i$. This procedure thus leads to an $(x,y)$-path
which is as claimed.
\end{proof}

\begin{lemma}
\label{cohesive_lem}
If\/ $G$ is an infinite connected vertex transitive graph that is
tightly $(s,t)$-ring-like with respect to $\vec{\sigma}$, then it is
$2st$-cohesive with respect to $\vec{\sigma}$.
\end{lemma}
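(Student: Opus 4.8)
The plan is to reduce the statement to a question about finite graphs and then apply Lemma~\ref{lem:cycles}.

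First I would record the only structural features that matter. Write $\sigma=\{X_i : i\in\ZZ\}$ with cyclic order $\ldots,X_{-1},X_0,X_1,\ldots$, and group the blocks into \emph{chunks} $Y_j=X_{jt}\cup X_{jt+1}\cup\cdots\cup X_{(j+1)t-1}$, each of size $st$. Since every pair of adjacent vertices of $G$ lies in blocks at cyclic distance at most $t$, there are no edges of $G$ between $Y_j$ and $Y_{j'}$ when the chunk indices differ by at least $2$; and since any block $X_{jt+r}$ is at distance exactly $t$ from $X_{(j+1)t+r}$, tightness gives that every vertex of $Y_j$ has a neighbor in $Y_{j+1}$ and conversely, i.e. consecutive chunks are \emph{neighborly}. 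Also, by Lemma~\ref{connectivity_lem}, $\kappa_\infty(G)=st$. Finally, to prove $2st$-cohesiveness it suffices to connect, by a path of length at most $2st$, any two vertices $u,v$ lying together in a window $W$ of at most $t+1$ consecutive blocks (this covers both the ``same block'' and the ``adjacent blocks of $\vec\sigma$'' cases), and $|W|\le(t+1)s\le 2st$; note $W$ is contained in the union $Y_j\cup Y_{j+1}$ of two consecutive chunks.

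Next I would pass to a finite graph. Since $G$ has two ends, I fix a long stretch $Y_{-M},\ldots,Y_{M}$ of chunks around $W$ and glue its two ends together using a shift automorphism of $G$, obtaining a finite multigraph $H$ that is locally identical to $G$, is connected, and is a finite cyclic arrangement of pairwise-neighborly chunks of size $st$. (Only a crude lower bound on the number of chunks is needed for this gluing to be well defined; connectivity of $H$ should follow from connectivity of $G$ together with $\kappa_\infty(G)=st$, which forbids thin spots.) Inside $H$ I would then build, out of the neighborly bipartite graphs between consecutive chunks (each of which has minimum degree at least $1$ on both sides), a family $Q_1,\ldots,Q_{st}$ of pairwise \emph{vertex}-disjoint cycles, each winding exactly once around the cyclic chunk arrangement, chosen so that the edges lying in no $Q_i$, together with the half-portions that the next step is permitted to use, total at most $2st$.

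Finally, I would apply Lemma~\ref{lem:cycles} to $H$, the cycles $Q_1,\ldots,Q_{st}$, and the vertices $u,v$, obtaining a $u$--$v$ path $P$ in $H$ meeting each $Q_i$ in a segment using at most half of that cycle's edges. Because each $Q_i$ winds around once and $P$ uses strictly less than half of it, $P$ cannot wind around the chunk arrangement at all; hence $P$ avoids every glued edge, is a genuine path of $G$ confined to a bounded window of chunks, and its edge-count is bounded by summing the half-cycle contributions with the leftover edges, giving $|P|\le 2st$. The main obstacle is the construction in the third paragraph: producing the right vertex-disjoint winding cycles in $H$ and controlling their sizes precisely enough that Lemma~\ref{lem:cycles} yields the clean bound $2st$ rather than a larger multiple of $st$; equivalently, one must show that a connecting path between $u$ and $v$ can always be confined to a window of $O(st)$ vertices and kept short there, which is exactly where both the neighborly property and $\kappa_\infty(G)=st$ are indispensable. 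By comparison, passing to $H$ and translating $P$ back to $G$ are routine.
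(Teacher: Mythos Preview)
Your plan has the right ingredients but the finite graph you build is the wrong size, and this is not a detail---it is the whole difficulty. In your $H$ you glue a ``long stretch'' of $N\approx 2M+1$ chunks, so each of your winding cycles $Q_i$ has length about $N$. Lemma~\ref{lem:cycles} then only tells you that $P\cap Q_i$ is a segment of length $\le N/2$; since the $st$ cycles are vertex-disjoint, a simple path can use a half-segment of each in turn, giving a bound of order $N\cdot st/2$, not $2st$. Your claim that ``$P$ cannot wind around'' does not help: even if $P$ never crosses the glued seam, it can zigzag through many chunks. There is no choice of $st$ long winding cycles in a large $H$ for which ``half of each plus the leftover edges'' totals $\le 2st$; that target is smaller than $|V(H)|$ itself.

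The paper resolves this by making $H$ as small as possible. It first builds a shift $\alpha\in\mathrm{Aut}(G)$ moving each block by exactly $2t$ (pick neighbors $x_0\in X_0$, $x_1\in X_t$, $x_2\in X_{2t}$ with $x_0,x_2$ in the same shift-orbit; this uses that shifts have at most two orbits), and takes $H=G/\langle\alpha\rangle$, so $|V(H)|=2st$. Then any path $Q$ in $H$ from $y$ to $x$ has at most $2st$ vertices automatically. The price is that the lift $\tilde Q$ of $Q$ ends not at $x_0$ but at some $\alpha^m(x_0)$, and one must walk $2|m|$ further steps along the spine path $P_0=(\ldots,x_{-1},x_0,x_1,\ldots)$ to reach $x_0$. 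Thus $\dist_G(x_0,y_0)\le |V(Q)|-1+2|m|$, and the real work is bounding $|m|$. This is where Lemma~\ref{lem:cycles} enters, and it is applied in $H$ to cycles built differently from yours: for each $i=0,\dots,2t-1$ the tightness hypothesis makes the bipartite graph between $X_i$ and $X_{i+t}$ regular on each component, so K\"onig's theorem gives a perfect matching $M_i$; the projections of $M_0,\dots,M_{2t-1}$ to $H$ decompose $V(H)$ into disjoint even cycles, each edge of which lifts to a step of exactly $\pm t$ in the block index. Choosing $Q$ as in Lemma~\ref{lem:cycles} (and, for cycles met in exactly half their length, choosing the half so the first step goes ``toward $0$'') yields $2|m|\le |V(Q)|-k$ where $k$ is the number of such problematic cycles, and $|V(Q)|\le st+\tfrac{k}{2}$, giving $\dist_G(x_0,y_0)\le 2st-1$. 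So Lemma~\ref{lem:cycles} controls the winding of the lift, not the length of $Q$; your proposal applies it to the wrong quantity.
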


\begin{proof}
Assume that $\sigma$ has blocks $\{X_i : i \in \ZZ\}$ and the cyclic order in $\vec{\sigma}$ is given by $\ldots,X_{-1},X_0,X_1,\ldots$.
Let $x_0\in X_0$ and $y_0\in X_{-1}\cup X_0\cup X_1$.
Our goal is to prove that $\dist_G(x_0,y_0) \le 2st$.

Let $x_1\in X_t$ be a neighbor of $x_0$, and let $x_2\in X_{2t}$ be a neighbor
of $x_1$. Since shifts of $G$ have only one or two orbits, it is possible to
chose $x_2$ such that $x_0$ and $x_2$ are in the same orbit, i.e., there is a
shift $\alpha$ mapping $x_0$ to $x_2$. For $i\in\ZZ$, let $x_{2i}=\alpha^i(x_0)$
and let $x_{2i+1}=\alpha^i(x_1)$. Note that vertices
$\dots,x_{-2},x_{-1},x_0,x_1,x_2,x_3,\dots$ form a two-way-infinite path $P_0$
in $G$ that is preserved under the action of $\alpha$.

Let $H$ be the quotient graph of $G$ under the action of $\alpha$. More precisely,
vertices of $H$ are the orbits of the action of $\alpha$ on $V(G)$, and two of them
are adjacent if there is an edge in $G$ joining the two orbits. Since $\alpha$ is
an automorphism of $G$ such that $\alpha^i$ has no fixed points
if $i\in \ZZ\setminus\{0\}$, $G$ is a cover of $H$.
Let $x$ and $y$ be the vertices of $H$ that are orbits of $x_0$ and $y_0$,
respectively. Since $G$ is connected, there is a path $Q$ from $y$ to $x$ in $H$.
Since $|V(H)|=2st$, it is immediate that $|V(Q)|\le 2st$.

By the unique lifting property of paths in covering spaces, $Q$ can be lifted to
a path $\tilde Q$ in $G$ joining $y_0$ and some vertex $x_0'$ in the same
$\alpha$-orbit as $x_0$. Note that $x_0'\in X_{2mt}$ ($m\in \ZZ$),
so $x_0$ can be reached from $x_0'$ by using $2|m|$ edges on the path $P_0$.
This shows that $\dist_G(x_0,y_0) \le |V(Q)| - 1 + 2|m|$.

In order to show that the last inequality above implies that $G$ is $2st$-cohesive,
we need to show that $|V(Q)|+2|m|-1 \le 2st$.
We shall use Lemma \ref{lem:cycles}, so we first define cycles $Q_j$ covering
all vertices in $H$. The edges between $X_i$ and $X_{i+t}$ form a bipartite graph.
Each component of this bipartite graph is a regular bipartite graph of positive
degree since $G$ is neighborly $(s,t)$-ring-like. By K\"onig's theorem,
there is a perfect matching $M_i$ between $X_i$ and $X_{i+t}$. We choose such
perfect matchings arbitrarily for all $i=0,1,\dots,2t-1$. The projection of all
matchings $M_0,M_1,\dots,M_{2t-1}$ into $H$ gives rise to a collection of disjoint
cycles $Q_j$ ($j\in J$) covering all vertices of $H$. Also note that all these
cycles are of even length (possibly length 2). Let us now assume that
the $(x,y)$-path $Q$ satisfies the conclusion of Lemma \ref{lem:cycles}.
If $Q$ contains half of the edges of some cycle $Q_j$, in which case we will say
that the cycle $Q_j$ is \DEF{problematic}, then we have the freedom
to choose one or the other segment of $Q_j$ to be included in $Q$. Note that
the definition of the cycles $Q_j$ implies the following property: if $v\in V(Q_j)$
and $vv_1,vv_2$ are the two edges on $Q_j$ incident with $v$, then for every
lift of the path $v_1 v v_2$ to a path $\tilde v_1 \tilde v \tilde v_2$ in $G$,
if $\tilde v\in X_i$, then one of the vertices $\tilde v_1, \tilde v_2$ is in
$X_{i-t}$ and the other one is in $X_{i+t}$. This property and freedom to choose
either half of problematic cycles $Q_j$ enables us to assume the
following. Let $Q=u_0u_1u_2\dots u_r$ (where $u_0=y$ and $u_r=x$) and let
$\tilde Q = \tilde u_0\tilde u_1 \tilde u_2\dots \tilde u_r$.

\begin{itemize}
\item[(*)] If a cycle $Q_j$ ($j\in J$) sucks and if $u_i$ ($i\ge1$) is the first
vertex of $Q$ on $Q_j$, then the $\vec\sigma$-distance of $\tilde u_{i-1}$ and
$\tilde u_{i+1}$ is at most $t$. Similarly, if the cycle containing $u_0$ sucks,
then $\tilde u_1\in X_l$, where $|l|\le t$.
\end{itemize}

Now, let $k$ be the number of problematic cycles $Q_j$. Then
$|V(Q)|\le \tfrac12 (|V(H)|+k) = st + \tfrac{k}{2}$.
Property (*) guarantees that for every problematic cycle $Q_j$ (except possibly the first one) we save one for the backtracking on $P_0$, i.e. $2|m|\le |V(Q)|-1-(k-1)$.
Therefore
$$
\dist_G(x_0,y_0) \le |V(Q)| - 1 + 2|m| \le 2|V(\tilde Q)|-k-1
                 \le 2st - 1.
$$
This completes the proof.
\end{proof}

\begin{proof}[Proof of Theorem \ref{two_ends_thm}]
This is an immediate consequence of Lemmas \ref{tight_lem},
\ref{connectivity_lem}, and \ref{cohesive_lem}.
\end{proof}

\section{The Tube Lemma}

\label{sect:tube lemma}

The goal of this section is to prove (a slight strengthening of)
the Tube Lemma \ref{tube_lem}.  We begin by proving a lemma similar in spirit, but for vertex cuts where all points in the cut are close to the same vertex.

\begin{lemma}
\label{balloon_lem}
Let\/ $X \subseteq V(G)$ be a finite vertex-set in a vertex transitive
graph $G$ and assume that there exists $y \in V(G)$ so that
$\dist_G(y,x) \le k$ for every $x \in X$.
If there is a finite component $H$ of
$G \setminus X$ with $\depth(V(H)) \ge k+2$, then $\depth(V(H')) < k + 2$  for every other component $H'$ of $G \setminus X$.
\end{lemma}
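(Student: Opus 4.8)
Suppose, for a contradiction, that besides the finite component $H$ with $\depth(V(H))\ge k+2$ there is a second component $H'$ of $G\setminus X$ with $\depth(V(H'))\ge k+2$. Among all components of $G\setminus X$ of depth at least $k+2$ (a finite nonempty collection, since $G$ is connected and locally finite and $X$ is finite) I would take $H$ to be one of minimum order; as a finite one exists, $|V(H)|<\infty$. First record that if $C$ is a component of $G\setminus X$ and $u\in V(C)$, then $\dist_G(u,V(G)\setminus V(C))=\dist_G(u,X)$: a shortest escaping path leaves $V(C)$ through $\partial V(C)\subseteq X$, and any path to $X$ leaves $V(C)$ through $\partial V(C)$ first. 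Hence $\depth(V(C))=\sup_{u\in V(C)}\dist_G(u,X)$, so I may fix $v\in V(H)$ with $\dist_G(v,X)\ge k+2$, and for each deep component $C$ a vertex $v_C\in V(C)$ with $\dist_G(v_C,X)\ge k+2$; equivalently $B(v,k+1)\subseteq V(H)$ and $B(v_C,k+1)\subseteq V(C)$. Since $C\ne H$, a $v$--$v_C$ path meets $X$ both on leaving $V(H)$ and on entering $V(C)$, so $\dist_G(v,v_C)\ge 2(k+2)$.

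The core step is to use vertex transitivity to slide the separator $X$ --- which lies in the small ball $B(y,k)$ --- into the finite deep component $H$. Choose $\varphi\in\mathrm{Aut}(G)$ with $\varphi(y)=v$; then $\varphi(X)\subseteq B(v,k)\subseteq V(H)$. For each deep component $C\ne H$, the connected set $V(C)$ avoids $\varphi(X)$, hence lies in a single component $K_C$ of $G\setminus\varphi(X)$; set $\Phi(C)=\varphi^{-1}(K_C)$, again a component of $G\setminus X$. I would verify three facts. First, $V(K_C)$ properly contains $V(C)$: choosing $x\in\partial V(C)$ (nonempty, as $G$ is connected and $V(C)\ne V(G)$), we have $x\in X$ and $\varphi(X)\subseteq V(H)$ is disjoint from $X$, so $x\notin\varphi(X)$, and since $x$ is adjacent to $V(C)\subseteq V(K_C)$ we get $x\in V(K_C)\setminus V(C)$. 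Second, $\Phi(C)$ is again deep: $\varphi^{-1}(v_C)\in\varphi^{-1}(V(C))\subseteq V(\Phi(C))$, and as $\varphi$ is an isometry with $\varphi^{-1}(v)=y$ we get $\dist_G(\varphi^{-1}(v_C),y)=\dist_G(v_C,v)\ge 2(k+2)$, hence $\dist_G(\varphi^{-1}(v_C),X)\ge k+4$ because $X\subseteq B(y,k)$; since $\partial V(\Phi(C))\subseteq X$ this gives $\depth(V(\Phi(C)))\ge k+4\ge k+2$. Third, $\Phi(C)\ne H$: if $\Phi(C)=H$ then $\varphi^{-1}(V(C))\subseteq V(H)$, so $|V(C)|\le|V(H)|$, hence $|V(C)|=|V(H)|<\infty$ by minimality, forcing $V(C)=\varphi(V(H))=V(K_C)$ and contradicting the first fact. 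Therefore $\Phi$ maps the finite nonempty set $\mathcal D$ of components of depth at least $k+2$ other than $H$ into itself, and strictly enlarges vertex sets, in the sense that $\varphi^{-1}(V(C))$ is a proper subset of $V(\Phi(C))$. If the members of $\mathcal D$ are all finite this is absurd, since then $|V(\Phi(C))|>|V(C)|$ for every $C\in\mathcal D$, so a member of $\mathcal D$ of maximum order gives a contradiction; in general, $\Phi$ has a periodic point $C_0$, and chaining the strict inclusions around its cycle yields that $V(C_0)$ is properly contained in $\varphi^m(V(C_0))$, which is again impossible once $V(C_0)$ is known to be finite.

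The step I expect to be the main obstacle is exactly this last one: making the monovariant airtight, i.e.\ ruling out an \emph{infinite} periodic component $C_0$, where the proper inclusion $V(C_0)\subset\varphi^m(V(C_0))$ no longer contradicts cardinality. There one would instead exploit that $\psi:=\varphi^{-m}$ is an automorphism with $\psi(V(C_0))$ a proper subset of $V(C_0)$ while $\psi$ preserves the \emph{finite} boundary $\partial V(C_0)\subseteq X\subseteq B(y,k)$, and argue that the shrinking copies $\psi^n(V(C_0))$ are incompatible with $V(C_0)$ being a single component of $G\setminus X$ whose separator is clustered in a ball of radius $k$. Everything else is elementary distance bookkeeping of the kind in the first paragraph; the only properties of $y$ that are used are $X\subseteq B(y,k)$ and $\varphi(y)=v$, and connectedness of $G$ enters only through the nonemptiness of $\partial V(C)$.
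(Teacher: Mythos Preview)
Your argument is complete and correct when every deep component other than $H$ is finite: the self-map $\Phi:\mathcal D\to\mathcal D$ strictly increases cardinality, so a member of maximum order gives a contradiction. The difficulty you flag in the last paragraph, however, is a genuine gap, and your proposed fix does not work. The assertion that $\psi=\varphi^{-m}$ ``preserves the finite boundary $\partial V(C_0)$'' is false: an automorphism carries boundaries to boundaries, so $\partial(\psi(V(C_0)))=\psi(\partial V(C_0))\subseteq\psi(X)$, which has no reason to lie in $X$ or to equal $\partial V(C_0)$. More to the point, the conclusion $\psi(V(C_0))\subsetneq V(C_0)$ for an infinite $C_0$ with finite boundary is \emph{not} contradictory in itself --- it is exactly what a translation toward an end does (think of $G=\ZZ$, $C_0=\{1,2,\dots\}$, $\psi(n)=n+1$). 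So there is nothing to squeeze out of the strict-inclusion chain alone once $C_0$ is infinite; the monovariant simply fails.

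The paper avoids this by minimizing a different quantity: instead of fixing $X$ and minimizing over components, it takes $X$ itself to be a counterexample of minimum cardinality (still contained in $B(y,k)$). With $A$ the finite deep component and $B$ any other deep component (possibly infinite, so $|A|\le|B|$ is automatic), it applies the same slide $\phi(y)=z$ into $A$, and then shows that the proper subset $X''=X\setminus\phi(A)$ is again a counterexample: $A\cup\phi(A)$ is a finite deep component of $G\setminus X''$, and $B$ lies in another deep component of $G\setminus X''$. This is the missing idea --- shrinking the separator rather than tracking component sizes --- and it handles the infinite and finite cases uniformly. If you want to keep your component-based viewpoint, you would need an additional argument linking the putative infinite periodic $C_0$ back to the finite component $H$ (which your map $\Phi$ has by design excluded), and none is apparent; switching the minimization to $|X|$ is the clean way through.
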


\begin{proof}
Let $X$ be a minimal counterexample to the lemma. Choose a finite component of
$G \setminus X$ with vertex-set $A$ so that $\depth(A) \ge k + 2$,
and let $B$ be the vertex-set of another (finite or infinite) component
of $G\setminus X$ with $\depth(B) \ge k + 2$. We may assume that
$|A| \le |B|$.  Next choose a point $z \in A$ with depth $\ge k+2$ and choose an automorphism $\phi$ with
$\phi(y) = z$.  Since $A$ contains the ball of radius $k+1$ around $z$, we have
$\phi(X) \subseteq A$ and $A \setminus (\phi(A) \cup \phi(X)) \neq \emptyset$.
It follows from this that $S = \phi(A) \setminus (A \cup X) \neq \emptyset$.  Furthermore, by our assumption on $|A|$, we have $T = V(G) \setminus (A \cup X \cup \phi(A) \cup \phi(X)) \neq \emptyset$.  Now, set $X' = X \cap \phi(A)$ and $X'' = X \setminus \phi(A)$.
Then $X'$, $X''$ are vertex cuts
which separate $S$ and $T$ (respectively) from the rest of the vertices.
Since $S\subset \phi(A)$ and $A$ is finite, we have $|S| < |A| \le |B|$.
This implies that $B\subseteq T$ is a component of $G\setminus X''$ of depth
$\ge k+2$. Another component of $G\setminus X''$ has vertex-set
$A\cup \phi(A)$. It is finite and has depth $\ge k+2$ as well.
Therefore, $X''$ contradicts our choice of $X$ as a minimum counterexample.
\end{proof}

For $i=1,2$ let $A_i$ be a tube with boundary partition $\{L_i,R_i\}$ and let
$P,Q,S,T,U,W,X,Y,Z$ be the sets indicated by the diagram in Figure~\ref{merge_fig}.
We say that $A_1$ and $A_2$ \DEF{merge} if $P,S,X,Z \neq \emptyset$, $\{Q,Y\} = \{L_1,R_1\}$ and $\{T,W\} = \{L_2,R_2\}$. Note that these conditions imply that $U=\emptyset$; see Figure \ref{fig:merge tubes} for intuition.

\begin{figure}
\begin{center}
\includegraphics[width=80mm]{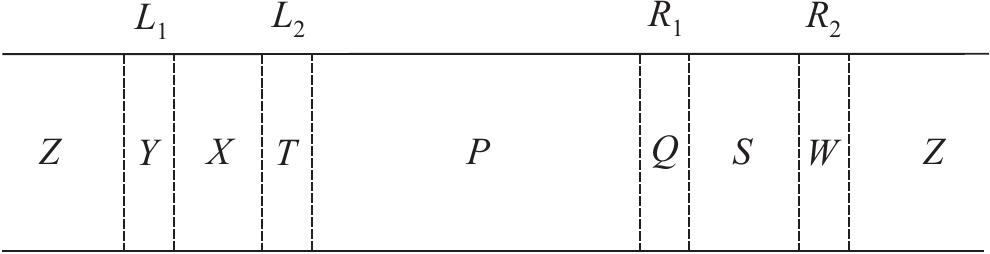}
\end{center}
\caption{Merging two overlapping tubes into a bigger one.}
\label{fig:merge tubes}
\end{figure}

The following lemma will be used to guarantee that tubes merge.

\begin{lemma}
\label{merge_lem}
Let $G$ be a connected vertex transitive graph, and for $i=1,2$ let $A_i$ be a $(k,k+2)$-tube in $G$ with boundary partition $\{L_i,R_i\}$ and
$\depth(V(G) \setminus (A_i \cup \partial A_i)) \ge k+2$.  Let
$P,Q,S,T,U,W,X,Y,Z$ be the sets indicated in Figure \ref{merge_fig}.
If $P,S,X,Z \neq \emptyset$ and
$\dist(\partial A_1, \partial A_2) \ge \frac{k+1}{2}$, then $A_1$ and $A_2$ merge.
\end{lemma}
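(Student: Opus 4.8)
The plan is to verify the three conditions that constitute ``$A_1$ and $A_2$ merge'': that $U=\emptyset$, that $\{Q,Y\}=\{L_1,R_1\}$, and that $\{T,W\}=\{L_2,R_2\}$ (the hypotheses already grant $P,S,X,Z\ne\emptyset$). The configuration of Figure~\ref{merge_fig} and all the hypotheses are symmetric under $A_1\leftrightarrow A_2$, which interchanges $Q\leftrightarrow T$, $X\leftrightarrow S$, $Y\leftrightarrow W$ and fixes $P,U,Z$; so it suffices to prove $U=\emptyset$ and $\{Q,Y\}=\{L_1,R_1\}$. The first is immediate: a vertex of $U=\partial A_1\cap\partial A_2$ would force $\dist_G(\partial A_1,\partial A_2)=0$, contrary to the hypothesis $\dist_G(\partial A_1,\partial A_2)\ge\tfrac{k+1}{2}$. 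Hence $\partial A_1=Q\cup Y$ and $\partial A_2=T\cup W$.

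The core observation is that no vertex of $Q$ and no vertex of $Y$ lie in the same part of $\{L_1,R_1\}$. If $q\in Q$ and $y\in Y$, then $q\in A_2$ while $y\notin A_2\cup\partial A_2$, so a shortest $(q,y)$-path contains a vertex $w\in\partial A_2$; since $q,y\in\partial A_1$, the two subpaths from $q$ and from $y$ to $w$ each have length at least $\dist_G(\partial A_1,\partial A_2)\ge\tfrac{k+1}{2}$, so $\dist_G(q,y)\ge k+1>k$. As $A_1$ is a $(k,k+2)$-tube, vertices in a common part of $\{L_1,R_1\}$ are at distance at most $k$, so $q$ and $y$ lie in different parts. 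Hence, \emph{once we know $Q$ and $Y$ are both nonempty}, fixing one vertex of each shows $Q$ lies in one part and $Y$ in the other, and since $Q\cup Y=\partial A_1=L_1\cup R_1$ we conclude $\{Q,Y\}=\{L_1,R_1\}$. This is the single place where the quantity $\tfrac{k+1}{2}$ matters: it is exactly what is needed to clear the threshold $k$ of the tube, which is also the source of the otherwise wasteful constants like $3k+6$ in the Tube Lemma.

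What is left is the nonemptiness of $Q$ and $Y$. That $Q\ne\emptyset$ follows from connectivity of $G[A_2\cup\partial A_2]$: by the edge restrictions of Figure~\ref{merge_fig} there are no edges between $\{P,T\}$ and $\{S,W\}$ inside that subgraph, so any path in it from $P$ to $S$ must meet $Q$; symmetrically, using $G[A_1\cup\partial A_1]$ together with $P$ and $X$, we get $T\ne\emptyset$. The delicate step, which I expect to be the main obstacle, is $Y\ne\emptyset$ (equivalently $W\ne\emptyset$), and this is precisely where the depth hypotheses $\depth(V(G)\setminus(A_i\cup\partial A_i))\ge k+2$, the assumption $Z\ne\emptyset$, and the Balloon Lemma~\ref{balloon_lem} enter. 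Suppose $Y=\emptyset$. Then $\partial A_1\subseteq A_2$, so every neighbour of $X$ lies in $T\cup X$, i.e.\ $\partial X\subseteq T\subseteq\partial A_2$, and likewise $\partial Z\subseteq W\subseteq\partial A_2$; thus $X$ (and $Z$) is sealed off from the rest of $G$ by a subset of $\partial A_2$, so the relevant component of the complement of that cut is \emph{finite}. One then uses a vertex of depth $\ge k+2$ furnished by one of the depth hypotheses, routed (after a short case analysis that handles the possibility of the deep vertex landing in $Z$) into $X$, to exhibit a finite component of depth $\ge k+2$ alongside a second component of depth $\ge k+2$ on the far side; since the separating cut sits inside one of the diameter-$\le k$ pieces $L_2,R_2$, this contradicts Lemma~\ref{balloon_lem}. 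The technical heart of the argument is exactly this bookkeeping: steering the deep vertex into $X$ and checking that the cut is concentrated enough for \ref{balloon_lem} to apply. Granting $Y\ne\emptyset$, we obtain $\{Q,Y\}=\{L_1,R_1\}$, and by the $A_1\leftrightarrow A_2$ symmetry $\{T,W\}=\{L_2,R_2\}$; hence $A_1$ and $A_2$ merge.
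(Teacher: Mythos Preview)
Your approach is essentially the paper's, up to the $A_1\leftrightarrow A_2$ symmetry you invoke: the paper supposes $W=\emptyset$ and works with $\{L_1,R_1\}$, while you suppose $Y=\emptyset$ and work with $\{L_2,R_2\}$. The steps $U=\emptyset$, $Q,T\neq\emptyset$, and the ``core observation'' giving $\{Q,Y\}=\{L_1,R_1\}$ once both are nonempty are all handled the same way.

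The one place your write-up falls short of a proof is the balloon step. Under $Y=\emptyset$ you correctly get $\partial X\subseteq T$ and $\partial Z\subseteq W$, but your description --- ``route'' a deep vertex into $X$, then find ``a second component of depth $\ge k+2$ on the far side'' --- does not actually close, and the claim that ``the separating cut sits inside one of the diameter-$\le k$ pieces $L_2,R_2$'' presupposes something you have not yet shown. The paper's move (transposed to your setup) is cleaner and fills exactly these gaps: from $Z\neq\emptyset$, $\partial Z\subseteq W$ and connectedness deduce $W\neq\emptyset$; then $\dist(T,W)\ge\dist(T,\partial A_1)+\dist(\partial A_1,W)\ge k+1$, hence $\{T,W\}=\{L_2,R_2\}$, and in particular $\dist(T,W)\ge k+2$. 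Now the $A_2$-depth hypothesis gives $u\in X\cup Z$ with $\dist(u,T\cup W)\ge k+2$. If $u\in X$, apply Lemma~\ref{balloon_lem} to the cut $T$: the component of $u$ inside $X$ is finite of depth $\ge k+2$, and any vertex of $W$ witnesses depth $\ge k+2$ in a different component. If $u\in Z$, apply the lemma to $W$ instead: the component of $u$ inside $Z$ has depth $\ge k+2$, and any vertex of $T$ witnesses depth $\ge k+2$ in the finite component $P\cup Q\cup S\cup T\cup X$. So the second deep component comes from the tube inequality $\dist(L_2,R_2)\ge k+2$, not from a depth hypothesis, and the case split is on \emph{which cut} to feed to Lemma~\ref{balloon_lem}, not on steering the deep vertex into $X$.
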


\begin{proof}
It follows from the assumption $\dist(\partial A_1,\partial A_2) \ge \frac{k+1}{2}$ that
$U = \emptyset$.  The sets $T$ and $Q$ cannot be empty since $G[A_1\cup \partial A_1]$ and $G[A_2\cup \partial A_2]$ are connected and $P,S,X \neq \emptyset$.
Suppose (for a contradiction) that $W = \emptyset$.  Then
$Y \neq \emptyset$ since $Z \neq \emptyset$ and $G$ is connected.  Furthermore
$\dist(Q,Y) \ge \dist(Q, \partial A_2) + \dist(\partial A_2,Y) \ge k+1$ and it follows that
$\{Q,Y\} = \{L_1,R_1\}$.  But then applying Lemma \ref{balloon_lem} to either $L_1$ or $R_1$ gives us a contradiction.  Thus $W \neq \emptyset$ and similarly $Y \neq \emptyset$.
Again, we have $\dist(Q,Y) \ge \dist(Q,\partial A_2) + \dist(\partial A_2,Y) \ge k+1$ and similarly $\dist(T,W) \ge \dist(T, \partial A_1) +
\dist(\partial A_1,W) \allowbreak \ge k+1$. It follows that $\{Q,Y\} = \{L_1,R_1\}$ and $\{T,W\} = \{L_2, R_2 \}$ as required.
\end{proof}

The key ingredient in the proof of our tube lemma for finite graphs is the construction of a certain graph cover.  We then use this cover together with Corollary \ref{dunwoody_cor} to obtain the desired structure.  Our construction is based on voltage assignments, and the reader is referred to
Gross and Tucker \cite{GT} for a good introduction to this area.
Some further notation and definitions will be introduced in the proof of the
Tube Lemma.

\begin{lemma}
Let\/ $G$ be a vertex transitive graph.
If\/ $G$ has a $(k,3k+6)$-tube $A$ with boundary partition $\{L,R\}$ and
$\depth(V(G) \setminus (A \cup \partial A)) \ge k+2$, then there exists a pair of integers $(s,t)$
and a cyclic system $\vec{\sigma}$ so that\/ $G$ is $(s,t)$-ring-like
and\/ $2st$-cohesive with
respect to $\vec{\sigma}$, and $st \le \min\{|L|,|R|\}$.
\end{lemma}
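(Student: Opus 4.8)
The strategy is to reduce this "finite" Tube Lemma to Theorem~\ref{two_ends_thm} (the structure theorem for two-ended graphs) by constructing, from the tube $A$, an infinite vertex-transitive cover $\widetilde G$ of $G$ which has exactly two ends, in such a way that the ring-like structure on $\widetilde G$ descends to the required ring-like structure on $G$. First I would set up the covering via voltage assignments: using that $G[A\cup\partial A]$ is connected and $A$ is a $(k,3k+6)$-tube with boundary partition $\{L,R\}$, I want to "cut" $G$ along the separator and glue infinitely many translated copies of the tube end-to-end. Concretely, one identifies a consistent way to walk from the $L$-side of a copy of $A$ to the $R$-side of the next copy; the depth hypothesis $\depth(V(G)\setminus(A\cup\partial A))\ge k+2$ guarantees that the two "halves" of the complement (the $L$-side and the $R$-side exteriors) are far apart and behave like the two ends of an infinite strip. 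This is the step I expect to be the main obstacle: making the voltage assignment precise enough that $\widetilde G$ is connected, vertex-transitive (the deck group together with a lift of a generating shift act transitively), locally finite, and genuinely two-ended rather than one- or many-ended.

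Once $\widetilde G$ is built, I would invoke Theorem~\ref{two_ends_thm}: there exist integers $s,t$ and a cyclic system $\vec{\sigma}$ on $\widetilde G$ so that $\widetilde G$ is $(s,t)$-ring-like and $2st$-cohesive with respect to $\vec{\sigma}$, and $\kappa_\infty(\widetilde G)=st$. The deck transformation group of the cover $\widetilde G\to G$ is a subgroup of $Aut(\widetilde G)$; since the cyclic system is canonical (it is preserved by all of $Aut(\widetilde G)$, being built from the $\beta$-invariant in Corollary~\ref{dunwoody_cor}), the deck group permutes the blocks of $\vec{\sigma}$ respecting the cyclic order, hence acts as a group of translations on the two-way-infinite path underlying $\vec{\sigma}$. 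Quotienting by the deck group therefore collapses $\vec{\sigma}$ either to a finite cyclic order (giving a genuine circular cyclic system on $G$) or — if the deck group acts trivially on $\vec\sigma$, which the construction should preclude — to something that still yields a valid cyclic system. In either case the images of the blocks of $\vec\sigma$ form a system of imprimitivity on $G$ with a cyclic order preserved by $Aut(G)$, and since covering maps are local isomorphisms, adjacent vertices of $G$ lift to adjacent vertices of $\widetilde G$, so they lie in blocks at $\vec\sigma$-distance $\le t$; thus $G$ is $(s',t)$-ring-like with $s'$ dividing (a multiple of) $s$, and the $2st$-cohesiveness likewise descends. I would take care to choose the cover minimally (e.g.\ so that no proper sub-cover still has two ends) so that $s,t$ come out as small as possible.

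For the bound $st\le\min\{|L|,|R|\}$, I would argue as follows. In $\widetilde G$, the preimage of the separator $\partial A$ within one "period" is a finite vertex set of size $|\partial A|=|L|+|R|$ whose removal disconnects $\widetilde G$ into two infinite pieces; more sharply, the preimages of $L$ alone and of $R$ alone each, within an appropriate window, form a vertex cut of $\widetilde G$ separating the two ends — this is exactly where the tube structure (all of $L$ mutually within distance $k$, all of $R$ within distance $k$, $L$ and $R$ at distance $\ge 3k+6$) is used, together with Lemma~\ref{balloon_lem} applied to $L$ and to $R$ to rule out that either side bounds a large finite "balloon." Hence $\kappa_\infty(\widetilde G)\le\min\{|L|,|R|\}$, and combining with $\kappa_\infty(\widetilde G)=st$ from Theorem~\ref{two_ends_thm} gives $st\le\min\{|L|,|R|\}$ as claimed. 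The merge lemmas (Lemma~\ref{merge_lem}, Lemma~\ref{balloon_lem}) are the tools that make the "one copy of the tube really is a fundamental domain bounded by two cuts of size $|L|$ and $|R|$" step rigorous; I expect the bookkeeping there, rather than any deep idea, to be the bulk of the remaining work.
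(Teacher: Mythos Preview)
Your overall strategy --- build a $\ZZ$-voltage cover $\widetilde G$, show it is vertex-transitive and two-ended, apply Theorem~\ref{two_ends_thm}, and push the cyclic system down --- is exactly what the paper does in the finite case, and your argument for $st\le\min\{|L|,|R|\}$ via $\kappa_\infty(\widetilde G)$ is the right one. But there are two genuine gaps in the plan.

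First, the step you flag as the main obstacle really is the main obstacle, and ``deck group together with a lift of a generating shift'' does not close it. What is needed is that \emph{every} $\phi\in Aut(G)$ lifts to $\widetilde G$; equivalently, if $\mu_\phi$ denotes the voltage map obtained from $\phi(A)$ in place of $A$, one must show $\mu_\phi\cong\mu_1$ for all $\phi$. The paper secures this by first \emph{replacing} the given tube by one that is optimal (minimising $\min\{|L|,|R|\}$, then $|L|+|R|$, then $|A|$), and then proving a sequence of claims about this optimal tube culminating in: whenever $\dist(\phi_1(v_0),\phi_2(v_0))\le 2$ for a fixed central vertex $v_0$, the tubes $\phi_1(A)$ and $\phi_2(A)$ both merge (in the sense of Lemma~\ref{merge_lem}) with a common translate $\psi(A)$. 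Merging tubes yield equivalent voltage maps, so a chain argument gives $\mu_\phi\cong\mu_1$ for all $\phi$, hence the projection $\nu:\widetilde{\mathcal G}\to Aut(G)$ is onto and $\widetilde G$ is vertex-transitive. Without the optimisation step the preliminary claims (which use minimality repeatedly, together with Lemma~\ref{balloon_lem}) fail, and without the equivalence of all $\mu_\phi$ you have no mechanism for lifting automorphisms. Your idea of instead ``choosing the cover minimally'' does not substitute for this: the minimality has to be imposed on the tube, not on the cover, because it is the tube that one translates and merges.

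Second, the paper treats infinite $G$ separately and does not build a cover there at all: it iteratively merges translates of the optimal tube to produce ever larger $(k,k+2)$-tubes, from which it reads off directly that $G$ has two ends, and then applies Theorem~\ref{two_ends_thm} to $G$ itself. Your plan to always pass through a cover would, in the infinite case, amount to rediscovering this merging argument anyway, since the ``cover'' would have to be $G$.
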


\begin{proof}
Choose a $(k,3k+6)$-tube $A$ with $\depth(V(G) \setminus (A \cup \partial A)) \ge k+2$ and
boundary partition $\{L,R\}$ so that:

~(i) $\min\{ |L|, |R| \}$ is as small as possible,

\,(ii) $|L| + |R|$ is as small as possible subject to (i),

(iii) $|A|$ is as small as possible subject to (i) and (ii).

\noindent{It suffices to show that $G$ is $(s,t)$-ring-like where $st \le \min\{|L|,|R|\}$.
The proof consists of a series of four claims numbered (0)--(3), followed by a split into two cases, depending on wether $G$ is finite or infinite.}

\medskip

\noindent{\bf (0)} Every vertex in $\partial A$ has a neighbor in
$V(G) \setminus (A \cup \partial A)$.

\smallskip

Suppose (for a contradiction) that $x \in \partial A$ has no such neighbor.  If
$\{x\} \neq L$ and $\{x\} \neq R$ then $A \cup \{x\}$ contradicts our choice of $A$
for (i) or (ii).  But if $\{x\} = L$ or $\{x\} = R$, then $\partial (A \cup \{x\})$ is included in
either $R$ or $L$, and applying Lemma \ref{balloon_lem}
to this set gives a contradiction.

\bigskip

\noindent{\bf (1)} If $G$ is finite, then $G \setminus E[A,L]$ and $G \setminus E[A,R]$ are connected, and
$|A| \le |V(G) \setminus (A \cup \partial A)|$.

\smallskip

Suppose that $G \setminus E[A,L]$ is not connected, let $B$ be the vertex-set of a
component of this graph with $B \cap A = \emptyset$ and set $L' = B \cap L$ and
$B' = B \setminus L$.  It is an immediate consequence of Lemma \ref{balloon_lem}
(applied to $L'$) that $\depth(B') \le k+1$.  It then follows from the same
lemma (applied to $R$) that $L' \neq L$.  But then, $A \cup B$ is a tube which
contradicts our choice of $A$ for (i) or (ii).  Thus $G \setminus E[A,L]$ is connected,
and by a similar argument $G \setminus E[A,R]$ is also connected.  It follows from
this that there is a component of $G \setminus (A \cup L \cup R)$ with vertex-set $C$
and $L \cap \partial C \neq \emptyset \neq R \cap \partial C$.
Since $G$ is finite, $C$ is a $(k,3k+6)$-tube and by assumption (iii),
we conclude that $|A| \le |C|$, so $|A| \le |V(G) \setminus (A \cup \partial A)|$ as desired.

\bigskip

\noindent{\bf (2)} If $\phi \in Aut(G)$ satisfies $A \cap \phi(A) \neq \emptyset$ and
$\dist(\partial A,\partial \phi(A)) \ge \frac{k+1}{2}$, then $A$ and
$\phi(A)$ merge.

\smallskip

Set $A_1 = A$ and $A_2 = \phi(A)$ and define the sets $P,Q,S,T,U,W,X,Y,Z$ as in Figure
\ref{merge_fig}.  It follows immediately from our assumptions that $U = \emptyset$ and that
$P,T \cup X, Q \cup S \neq \emptyset$.  If $S = \emptyset$, then
$Q \neq \emptyset$, but every point in this set has all its neighbors in $P \cup Q = A \cup \partial A$
and we have a contradiction to claim (0).  Thus $S \neq \emptyset$ and similarly $X \neq \emptyset$.
If $Z = \emptyset$ then $G$ is finite, and by (0) we have
$W = Y = \emptyset$. But then
$|V(G) \setminus (A \cup \partial A)| = |S| < |P| + |Q| + |S| = |A_2| = |A|$
contradicting (1).  Thus $Z \neq \emptyset$.  Now Lemma \ref{merge_lem} shows that
$A$ and $\phi(A)$ merge, as claimed.

\bigskip

Choose a shortest path $D$ in $G[A \cup \partial A]$ from $L$ to $R$.  Let $v_{-2}$ be the
end of $D$ in $L$, let $v_2$ be the end of $D$ in $R$, and let $r \in \ZZ$
be such that the
length of $D$ is either $2r$ or $2r+1$.  Note that by our assumptions, $r \ge \lceil \frac{3k+5}{2} \rceil$.  Choose a vertex $v_0$ in
``the middle of'' $D$, i.e., $r \le \dist(v_0,v_i) \le r+1$ for $i\in\{-2,2\}$, and
choose vertices $v_{-1},v_1 \in V(D)$ at distance $\lceil \frac{3k+5}{2} \rceil$ from $v_0$ so that
$v_{-1}$ lies on the subpath of $D$ from $v_0$ to $v_{-2}$ and $v_1$ lies on the subpath from $v_0$ to $v_2$.

\bigskip

\noindent{\bf (3)} Let $\phi$ be an automorphism of $G$ and assume that either
$\dist(\phi(v_0),v_{-1}) \le 1$ or
$\dist(\phi(v_0),v_{1}) \le 1$.  Then
$\dist(\partial A, \partial \phi(A)) \ge \frac{k+1}{2}$, and $A$ and $\phi(A)$ merge.

\smallskip

We give the proof in the case $\dist(\phi(v_0),v_{-1}) \le 1$; the other case follows by a similar argument.  Let $y \in L$ and $x \in \partial \phi(A)$.  Then we have
\begin{eqnarray*}
\dist(y, x)
    &\ge&   \dist(\phi(v_0),x) - \dist(\phi(v_0),v_{-2})
        - \dist(v_{-2},y)  \\
    &\ge&   r - (r - \tfrac{3k+5}{2} + 2) - k   \\
    &=& \tfrac{k+1}{2}.
\end{eqnarray*}
Next let $y \in R$.  Then any path from $\phi(v_0)$ to $y$ which does not contain a point in
$L$ has length $\ge r + \frac{3k+5}{2} - 1 = r + \frac{3k+3}{2}$ and any such path which does contain a point in $L$ has length $\ge \dist(\phi(v_0),L) + \dist(L,R)
\ge (r - \frac{3k+6}{2} - 1) + (3k+6) \ge r + \frac{3k+4}{2}$.  It follows that
$\dist(\phi(v_0),y) \ge r + \frac{3k+3}{2}$.  Thus, for every $x \in \partial \phi(A)$ we have
\begin{eqnarray*}
\dist(x,y)
    &\ge&   \dist(\phi(v_0),y) - \dist(\phi(v_0),x)   \\
    &\ge&   (r + \tfrac{3k+3}{2}) - (r + 1 + k)     \\
    & = &   \tfrac{k+1}{2}
\end{eqnarray*}
Thus we have that $\dist(\partial A, \partial \phi(A)) \ge \frac{k+1}{2}$ and
by (2) we find that $A$ and $\phi(A)$ merge.

\bigskip

\noindent{\sl Case 1:} $G$ is infinite.

\smallskip

We shall construct a sequence $(\phi_i,S_i)$ where $\phi_i \in Aut(G)$ and
$S_i \in \{\phi_i(L),\phi_i(R)\}$
recursively by the following procedure.  Set $(\phi_{-1},S_{-1}) = (1,L)$ and
$(\phi_0,S_0) = (1,R)$.  For $i \ge 1$ choose $\phi_i \in Aut(G)$ and
$S_i \in \{\phi_i(L),\phi_i(R)\}$ so that the following properties are satisfied
(it follows from claim (3) that such a choice is possible):
\par
\begin{tabular}{lp{4in}}
(i) &   $\phi_i(A)$ merges with $\phi_{i-2}(A)$. \\
(ii) & $S_{i-2} \subseteq \phi_i(A)$. \\
(iii) & $S_i \cap \phi_{i-2}(A) = \emptyset$. \\
(iv) & $\dist(\partial \phi_i(A), \partial \phi_{i-2}(A)) \ge \frac{k+1}{2}$.
\end{tabular}

\noindent{}For every $i \ge 0$, define $X_i = \cup_{j=0}^i \phi_j(A)$.
We now prove by induction that
$X_i$ is a $(k,k+2)$-tube with boundary partition $\{S_{i-1},S_i\}$ for every $i \ge 0$.  It follows immediately from our definitions that $X_0=A$ and
$\{S_{-1},S_0\} = \{L,R\}$, so this is true for $i=0$.  For the inductive step, suppose that this holds for all values less than $i$.
If $\dist(S_{i},S_{i-1}) \le k+1$, then there is a vertex at distance $\le \lceil \frac{3k+1}{2} \rceil$ from every point in
$\partial X_i$ and since
$\depth(X_{i}) \ge \depth(X_{i-1})
\ge \depth(X_0) \ge \frac{3k+6}{2} \ge \lceil \frac{3k+5}{2} \rceil$ we have
a contradiction to Lemma \ref{balloon_lem} (with $\lceil \frac{3k+1}{2} \rceil$
playing the role of $k$ in the lemma).
But since $X_{i-1}$ is a $(k,k+2)$-tube and $\dist( \partial \phi_i(A), \partial X_{i-1}) \ge \frac{k+1}{2}$, so by applying Lemma \ref{merge_lem} to the tubes $X_{i-1}$ and $\phi_i(A)$ we conclude that they merge. It follows that $X_i$ is a $(k,k+2)$-tube with boundary partition $\{S_{i-1},S_i\}$.

A straightforward inductive argument now shows that the graph $G$ has two ends.  Furthermore,
$L$ and $R$ are vertex cuts that separate the vertex-set into two sets of infinite size.  Thus, by
Theorem \ref{two_ends_thm} we find that $G$ is $(s,t)$-ring like and
$2st$-cohesive with respect to some cyclic system $\vec{\sigma}$ where $st \le \min \{|L|,|R|\}$, as desired.

\bigskip

\noindent{\sl Case 2:} $G$ is finite.

\smallskip

Let us first define necessary notation for applying voltage graph construction.
For every graph $G$, we define
$$
A(G) = \{(u,v)\in V(G)\times V(G) : \mbox{$u$ and $v$ are adjacent in $G$}\}
$$
and we call the members of $A(G)$ \DEF{arcs}.  We call a map
$\mu : A(G) \rightarrow \ZZ$ a \DEF{voltage} map if $\mu(u,v) = -\mu(v,u)$ for every $(u,v) \in A(G)$.
(This notion extends naturally to general groups, but we have restricted our attention to $\ZZ$ for simplicity.)
For every graph $G$ and voltage map $\mu$, we define a graph ${\mathcal C}(G,\mu)$ as follows:  the vertex-set is $V(G) \times \ZZ$, and vertices $(u,i)$, $(v,j)$ are adjacent if $uv \in E(G)$ and $j-i = \mu(u,v)$.  The map $\pi : V(G) \times \ZZ \rightarrow V(G)$ given by
$\pi(v,g) = v$ is then a covering map, so ${\mathcal C}(G,\mu)$ is a cover of $G$.

If $\mu,\mu'$ are voltage maps on $G$, we say that a mapping
$\Psi : V( {\mathcal C}(G,\mu) ) \rightarrow V({\mathcal C}(G,\mu'))$
\DEF{preserves} $\pi$ if $\pi \circ \Psi = \pi$.  Note that for every integer $j$ the map
$\Psi^j : V(G) \times \ZZ \rightarrow V(G) \times  \ZZ$ given by $\Psi^j(u,i) = (u,i+j)$ is an automorphism
of ${\mathcal C}(G,\mu)$ which preserves $\pi$.  For every $S \subseteq V(G)$ and $m \in \ZZ$ let
$\delta_S^m : A(G) \rightarrow \ZZ$ be the map given by the rule
$$\delta_S^m(u,v) = \left\{ \begin{array}{cl}
    m   &   \mbox{if $u \in S$ and $v \not\in S$}   \\
    -m  &   \mbox{if $u \not\in S$ and $v \in S$}   \\
    0   &   \mbox{otherwise}
    \end{array} \right.$$
We say that two voltage maps $\mu,\mu' : A(G) \rightarrow \ZZ$ are \DEF{elementary equivalent}
if either $\mu' = -\mu$ or $\mu' = \mu + \delta_S^m$ for some $S \subseteq V(G)$ and $m \in \ZZ$.
We say that $\mu$ and $\mu'$ are \DEF{equivalent} and write $\mu \cong \mu'$ if there is a sequence
$\mu = \mu_0, \mu_1, \ldots, \mu_n = \mu'$ of voltage maps on $G$ with $\mu_i$ elementary
equivalent to $\mu_{i+1}$ for
every $0 \le i \le n-1$.  It is straightforward to verify that whenever $\mu \cong \mu'$, there
exists a bijection from ${\mathcal C}(G,\mu)$ to ${\mathcal C}(G,\mu')$ which preserves $\pi$.

By possibly switching $L$ and $R$, we may assume that $|L| \le |R|$.
For every $\phi \in Aut(G)$, define the voltage map
$\mu_{\phi} : A(G) \rightarrow \ZZ$ by the following rule:
\begin{eqnarray*}
\mu_{\phi}(u,v) &=& \left\{ \begin{array}{cl}
    1   &   \mbox{if $u \in \phi(L)$ and $v \in \phi(A)$}   \\
    -1  &   \mbox{if $v \in \phi(L)$ and $u \in \phi(A)$}   \\
    0   &   \mbox{otherwise}
    \end{array} \right. \\
\end{eqnarray*}

We shall use the following properties of voltage maps assigned to tubes.
First of all, if we interchange the roles of $L$ and $R$ in the definition
of $\mu_{\phi}$, we get another voltage map $\mu'_{\phi}$ that is equivalent
to $\mu_{\phi}$ since $\mu'_{\phi} = \mu_{\phi} + \delta^{-1}_{\phi(A)}$.
Second, if tubes $\phi_1(A)$ and $\phi_2(A)$ merge, then
$\mu_{\phi_1}\cong \mu_{\phi_2}$.

Let $\phi_1,\phi_2 \in Aut(G)$ satisfy $\dist(\phi_1(v_0),\phi_2(v_0)) \le 2$, where $v_0$ is the vertex
selected before claim (3).  Then choose
$u \in V(G)$ so that $\dist(u,\phi_i(v_0)) \le 1$ for $i=1,2$ and choose $\psi \in Aut(G)$ so that $\psi(v_{-1}) = u$ (where $v_{-1}$ is as defined before claim (3)).  It follows from claim (3) that $\psi(A)$ and $\phi_i(A)$ merge for $i=1,2$.  It follows from this that $\mu_{\phi_1} \cong \mu_{\psi} \cong \mu_{\phi_2}$.  We conclude that $\mu_{\phi} \cong \mu_1$ for every $\phi \in Aut(G)$.

Let $\tilde{G} = {\mathcal C}(G,\mu_1)$ and for every $i \in {\mathbb Z}$ let
$A_i =  \{ (v,i) : v \in V(G) \}$.  By claim (1) we have that
$G \setminus E[A,L]$ is connected, and it follows that $\{ G[A_i] : i \in {\mathbb Z} \}$
is the set of components of
$\tilde{G} \setminus \{ uv \in E(\tilde{G}) : \mbox{$\pi(u) \in L$ and $\pi(v) \in A$} \}$.
It follows that $\tilde{G}$ has two ends, and that $\kappa_{\infty}(\tilde{G}) \le |L|$.

Let ${\mathcal G} = Aut(G)$ and let
$\tilde{\mathcal G} = \{\phi\in Aut(\tilde{G}) : \mbox{$\phi$ preserves $\pi$} \}$.
Then define the map $\nu : \Tilde{\mathcal G} \rightarrow {\mathcal G}$ by the rule
$\nu(\psi)v = \pi (\psi(v,0))$ ($\nu(\psi)$ is simply the natural projection of $\psi$).
The following diagram now shows the actions of the group ${\mathcal G}$ on
the graph $G$ and of
$\tilde{\mathcal G}$ on $\tilde{G} = {\mathcal C}(G,\mu_1)$.

\begin{center}{$
\begin{diagram}
\node{\tilde{\mathcal G}}   \arrow{e,t}{} \arrow{s,l}{\nu}
\node{\tilde{G}}        \arrow{s,l}{\pi} \\
\node{\mathcal G}   \arrow{e,t}{}
\node{G}
\end{diagram}
$}\end{center}

Next we shall prove that the map $\nu$ is onto.  Let $\phi \in {\mathcal G}$ and define
$\tilde{\phi} : V(G) \times \ZZ \rightarrow V(G) \times \ZZ$ by the rule
$\tilde{\phi}(v,i) = (\phi(v),i)$.  Then $\tilde{\phi}$ is an isomorphism from
${\mathcal C}(G,\mu_1)$ to ${\mathcal C}(G,\mu_{\phi^{-1}})$.  Since $\mu_1 \cong \mu_{\phi^{-1}}$
we may choose an isomorphism
$\psi : {\mathcal C}(G,\mu_{\phi^{-1}}) \rightarrow {\mathcal C}(G,\mu_1)$ which preserves $\pi$.
With these definitions in place, we now have the following commuting diagram.

\begin{center}{$
\begin{diagram}
\node{{\mathcal C}(G,\mu_1)}    \arrow{e,t}{ \hat{\phi} } \arrow{s,l}{\pi}
\node{{\mathcal C}(G,\mu_{\phi^{-1}})}  \arrow{e,t}{\psi} \arrow{s,l}{\pi}
\node{{\mathcal C}(G,\mu_1)}    \arrow{s,l}{\pi}    \\
\node{G}    \arrow{e,t}{\phi}
\node{G}    \arrow{e,t}{1}
\node{G}
\end{diagram}
$}\end{center}

Thus, we have that $\tilde{\phi} \circ \psi \in \tilde{\mathcal G}$ so $\nu$ is onto.
Thus $\tilde{G}$ is vertex transitive, and by Theorem \ref{two_ends_thm} we have that
$\tilde{G}$ is $(s,t)$-ring-like and $2st$-cohesive with respect to some cyclic system
$\vec{\sigma}$ where $st = \kappa_{\infty}( \tilde{G} ) \le |L|$.  Since $\tilde{G}$ is a regular
cover, $\tau = \{ \pi(X) : X \in \sigma \}$ is a partition of $G$.
Since $\nu$ is onto, we conclude that $\tau$ is a system of imprimitivity on $G$.  Now, $\tau$
inherits a cyclic ordering $\vec{\tau}$ from $\vec{\sigma}$, and it follows
that $G$ is $(s,t)$-ring-like and $2st$-cohesive with respect to $\vec{\tau}$.  Since
$st \le |L|$, this completes the proof.
\end{proof}

\section{Main Theorem}

The purpose of this section is to prove our main result, Theorem \ref{main_thm}.
We begin by establishing a lemma on the structure of separations in ring-like graphs.

\begin{lemma}
\label{ring-sep_lem}
Let\/ $G$ be a vertex transitive graph which is $(s,t)$-ring-like and\/
$2st$-cohesive with respect to $\vec{\sigma}$.
Let\/ $A \subseteq V(G)$ and assume that\/ $G[A\cup \partial A]$ is connected,\/
$|A| \le \tfrac{1}{2} |V(G)|$ and\/
$|\partial A| = k$.  Then there exists an interval $J$ of $\vec{\sigma}$
so that the set\/ $Q = \cup_{B \in J} B$ satisfies $A \subseteq Q$
and $|Q \setminus A| \le 2s^2t^2k + 2stk$.
\end{lemma}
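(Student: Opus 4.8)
The plan is to use the block-projection $\phi\colon V(G)\to\sigma$ and to show that $A$ fills up all but one ``gap'' of the cyclic order $\vec\sigma$ with very little waste. Write $W=V(G)\setminus(A\cup\partial A)$; since every neighbour of a vertex of $A$ lies in $A\cup\partial A$, there are no edges of $G$ between $A$ and $W$. Index the blocks as $X_i$ ($i\in\mathbb Z$, or $i\in\mathbb Z_n$ if $G$ is finite) following $\vec\sigma$, and recall that $(s,t)$-ring-like means every edge joins blocks at cyclic distance $\le t$. Call a block \emph{impure} if it meets both $A$ and $W$. First I would record two facts. \emph{(1) Impure blocks are near $\partial A$:} if $X_i$ is impure, $2st$-cohesiveness gives a path of length $\le 2st$ from a vertex of $X_i\cap A$ to a vertex of $X_i\cap W$; this path must meet $\partial A$, and cutting it at such a vertex shows $\dist_G(X_i,\partial A)\le st$, so $X_i$ lies within cyclic distance $st^2$ of a block meeting $\partial A$. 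As $\partial A$ meets at most $k$ blocks, there are at most $k(2st^2+1)$ impure blocks. \emph{(2) At most one wide gap:} call a maximal run of $\ge t$ consecutive blocks disjoint from $A\cup\partial A$ a \emph{wide gap}; no edge of $G$ can cross a wide gap (its ends would lie in blocks at cyclic distance $>t$), so two disjoint wide gaps, each flanked by blocks meeting $A\cup\partial A$, would disconnect $G[A\cup\partial A]$, a contradiction.

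Next I would choose the interval. If $\phi(A)=\sigma$, then by (1) every block meeting $W$ is impure, so $|V(G)\setminus A|=|W|+k\le (s-1)k(2st^2+1)+k$, which is at most $2s^2t^2k+2stk$; take $J=\sigma$ and $Q=V(G)$. Otherwise some block misses $A$; let $J$ be obtained from $\sigma$ by deleting the wide gap if one exists, and otherwise by deleting the longest maximal run of blocks disjoint from $A$. Then $J$ is an interval of $\vec\sigma$ with $A\subseteq Q:=\bigcup_{B\in J}B$, and $|Q\setminus A|=|Q\cap\partial A|+|Q\cap W|\le k+|Q\cap W|$.

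It remains to bound $|Q\cap W|$. A vertex of $Q\cap W$ lies either in an impure block — of which there are at most $k(2st^2+1)$, each contributing at most $s-1$ vertices to $W$ — or in a block of $J$ disjoint from $A\cup\partial A$, that is, in one of the \emph{narrow} gaps of $\phi(A)$ left inside $J$. The heart of the argument, and the step I expect to be the main obstacle, is to show that these narrow gaps carry only $O(stk)$ vertices of $W$: by (2) none of them contains a run of $t$ consecutive blocks disjoint from $A\cup\partial A$, and $\partial A$ meets at most $k$ blocks in total, so the blocks of a narrow gap fall into short runs separated by the $O(k)$ blocks touching $\partial A$; the connectivity of $G[A\cup\partial A]$ forces each narrow gap actually to be bridged by an edge of $G$; and the hypothesis $|A|\le\tfrac12|V(G)|$ — which says $W\cup\partial A$ occupies at least half of $V(G)$ — is what rules out having many narrow gaps each holding a large share of $W$ (were that to happen, $V(G)$ itself would be small, so $Q$ could be taken close to $V(G)$ and the degenerate case handled above would already finish). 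Combining the impure-block and narrow-gap contributions and tracking the constants then gives $|Q\setminus A|\le 2s^2t^2k+2stk$. Everything else — the cohesiveness estimate in (1), the no-crossing observation in (2), and the choice of $J$ — is routine; the real work is in translating ``no second wide gap'' plus ``$A$ is not large'' into a genuine bound on the amount of $W$ trapped between the pieces of $A$ inside $J$.
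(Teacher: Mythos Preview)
Your overall plan --- project to blocks, use cohesiveness to show bad blocks lie near $\partial A$, argue connectivity permits at most one big gap --- matches the paper's, and steps (1) and (2) are essentially right. But the step you identify as the main obstacle is a real gap, and the resolution you sketch does not work.

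The problem is that ``impure'' (meets both $A$ and $W$) is too restrictive. A block $B$ can be entirely inside $A$ while its cyclic neighbour $B'$ is entirely inside $W$; neither is impure, yet such transitions are precisely what create narrow gaps, and your (1) gives no bound on how many of them occur. Invoking $|A|\le\tfrac12|V(G)|$ does not help: that hypothesis says $W\cup\partial A$ is \emph{large}, which is perfectly compatible with many narrow gaps, and in fact the paper's proof never uses this hypothesis at all. (There is also a minor omission: a vertex of $Q\cap W$ may lie in a block meeting $\partial A$ but not $A$, a case you do not list; but that is at most $k$ blocks and is not the real issue.)

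The paper's fix is to widen the notion of ``bad''. It defines $J_0$ to be the blocks $B$ with $A\cap B\neq\emptyset$ and $B\cup B'\not\subseteq A$ for some $B'$ at cyclic distance $\le 1$ from $B$ --- so $J_0$ captures not only your impure blocks but also blocks fully in $A$ that border the complement. The same cohesiveness argument you use in (1) shows each $B\in J_0$ is within cyclic distance $st^2$ of a block meeting $\partial A$, so $J_0$ sits inside a set $J_1$ of at most $k(2st^2+1)$ blocks. The payoff, which your definition cannot deliver, is that every interval of $\vec{\sigma}\setminus J_1$ is \emph{homogeneous}: entirely inside $A$ or entirely disjoint from $A$. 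One then adds the at most $k$ short complementary intervals (length $\le t$) to form $J_2$, adds back the intervals lying inside $A$ to form $J_3$, and connectivity of $G[A\cup\partial A]$ makes $J_3$ an interval. Since the blocks of $J_3\setminus J_2$ are contained in $A$, one gets $|Q\setminus A|\le s|J_2|\le 2s^2t^2k+2stk$ immediately; no separate narrow-gap accounting is ever needed.
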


\begin{proof}
Let $J_0$ be the set of all $B \in \vec{\sigma}$ with the property that
$A \cap B \neq \emptyset$ and $A \cap (B \cup B') \neq B \cup B'$ for some $B' \in \vec{\sigma}$ with $B'$ at $\vec{\sigma}$-distance at most 1 from $B$.
Consider a block $B\in J_0$. Let $a\in A\cap B$ and $b\in (B\cup B')\setminus A$.
Since $G$ is $2st$-cohesive with respect to $\vec{\sigma}$, there is a path
in $G$ from $a$ to $b$ of length $\le 2st$. This path contains at least one
vertex in $\partial A$. Since this path starts in $B$ and ends in $B\cup B'$,
its maximum $\vec{\sigma}$-distance from $B\cup B'$ is at most $st^2$.
Therefore, for every $B \in J_0$, there is a vertex $u_B\in \partial A$
contained in a block at $\vec{\sigma}$-distance $\le st^2$ from $B$.

Let us consider all pairs $(B,u)$, where $B\in J_0$ and $u\in\partial A$ is a vertex in a block at $\vec{\sigma}$-distance $\le st^2$ from $B$ and such that there exists a path in $A$ from a vertex $a\in B$ to a neighbor of $u$ of length at most $2st-1$. By the above, each $B\in J_0$ participates in at least one such pair $(B,u_B)$. A vertex $u$ can be the second
coordinate in at most $2st^2+1$ such pairs, since the $\vec{\sigma}$-distance
of $u_B$ from $B$ is at most $st^2$. Let $T$ be a maximal subset of $J_0$
such that the corresponding vertices $u_B$ for $B\in T$ are pairwise different.
Then the set $S = \{u_B\mid B\in T\} \subseteq \partial A$ satisfies
$|S|=|T|\le k$. Further, let
$J_1 = \{ B \in \vec{\sigma} :
\mbox{$B$ is at $\vec{\sigma}$-distance $\le st^2$ from some block $B'$ with $B'\cap S\ne \emptyset$} \}$.
Then $J_0 \subseteq J_1$ and $|J_1| \le |S|(2st^2+1) \le k(2st^2+1)$.
Note further, that any interval of blocks
disjoint from $J_1$ must either all be contained in $A$ or all be disjoint from $A$.  Next, modify the
set $J_1$ to form $J_2$ by adding every block $B$ with the property that the maximal
interval of $\vec{\sigma} \setminus J_1$ containing $B$ has length $\le t$.
Since there are at most $|T|\le k$ such maximal intervals, we have
$|J_2| \le (2st^2+1)k + tk$ and every maximal interval disjoint from $J_2$ has length $\ge t+1$.  Finally,
modify the set $J_2$ to form $J_3$ by adding every block $B$ with $B \subseteq A$.  It follows from the assumption that $G[A\cup\partial A]$ is connected that $J_3$ is an interval of $\vec{\sigma}$.  Furthermore,
setting $Q = \cup_{B \in J_3} B$ we have $A \subseteq Q$ and
$|Q \setminus A| \le s|J_2| \le (2s^2t^2+s)k + stk \le 2s^2t^2k + 2stk$ as desired.
\end{proof}

\begin{lemma}
\label{diam-depth_obs}
Let $G$ be a connected vertex transitive graph and let $A \subset V(G)$ be finite.
Then we have

\begin{enumerate}[label={\rm (\roman{*}) }, ref={\rm (\arabic{*})}]
\item
If\/ $G[A\cup\partial A]$ is connected,
then $\diam(A) < |\partial A|\cdot(2\,\depth(A)+1).$
\smallskip
\item
Let $d\ge0,k\ge1,\ell\ge0,m\ge1$ be integers and assume that $|\partial A| \le k$ and $\diam(G) \ge mk(2d+1) + \ell - d + 1$. If\/ $|A| \ge \frac{|V(G)| - 2\ell}{m}$, then $\depth(A) \allowbreak \ge d+1$.
\end{enumerate}
\end{lemma}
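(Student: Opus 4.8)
For (i) we may assume $A\neq\emptyset$, so that $\partial A\neq\emptyset$ since $G$ is connected; write $D=\depth(A)$. A shortest path from a vertex $v\in A$ to $V(G)\setminus A$ leaves $A$ through $\partial A$, so $\dist_G(v,\partial A)=\dist_G(v,V(G)\setminus A)\le D$ for every $v\in A$. Choose $f(v)\in\partial A$ with $\dist_G(v,f(v))\le D$ for each $v\in A$ and put $f(w)=w$ for $w\in\partial A$; the fibres $A_w=f^{-1}(w)$ ($w\in\partial A$) then partition $A\cup\partial A$ with $w\in A_w\subseteq B(w,D)$. Form the quotient graph $H$ on $\{A_w:w\in\partial A\}$, making $A_w$ adjacent to $A_{w'}$ whenever some edge of $G$ joins them. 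Connectedness of $G[A\cup\partial A]$ makes $H$ a connected graph on $|\partial A|$ vertices, so $\diam(H)\le|\partial A|-1$; and if $A_w\sim_H A_{w'}$, an edge $uu'$ of $G$ with $u\in A_w$ and $u'\in A_{w'}$ gives $\dist_G(w,w')\le D+1+D=2D+1$. For $x,y\in A$, concatenating a shortest path from $x$ to $f(x)$, a path from $f(x)$ to $f(y)$ built from a shortest chain of at most $|\partial A|-1$ $H$-adjacent classes (each step costing at most $2D+1$), and a shortest path from $f(y)$ to $y$, we get $\dist_G(x,y)\le D+(|\partial A|-1)(2D+1)+D=|\partial A|(2D+1)-1$. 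Hence $\diam(A)\le|\partial A|(2\depth(A)+1)-1<|\partial A|(2\depth(A)+1)$.

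For (ii), note that $|A|\ge(|V(G)|-2\ell)/m$ cannot hold for a finite set $A$ when $G$ is infinite, so we may assume $G$ finite and let $P=v_0v_1\cdots v_D$ be a geodesic with $D=\diam(G)$. We prove the contrapositive: assuming $\depth(A)\le d$, we bound $\diam(G)$. Under the running assumption that $G[A\cup\partial A]$ is connected, and since $\depth$ and $|\partial(\cdot)|$ are $\mathrm{Aut}(G)$-invariant, part (i) applies to every translate $A'=\phi(A)$ and gives $\diam(A')\le|\partial A'|(2\depth(A')+1)-1\le k(2d+1)-1$. Averaging $|\phi(A)\cap V(P)|$ over $\phi\in\mathrm{Aut}(G)$, vertex transitivity makes the average equal to $(D+1)|A|/|V(G)|$, so some translate $A'$ meets $V(P)$ in at least $(D+1)|A|/|V(G)|$ vertices. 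On the other hand, if $p$ and $q$ are the least and greatest indices $i$ with $v_i\in A'$, then $q-p=\dist_G(v_p,v_q)\le\diam(A')\le k(2d+1)-1$ (as $v_p,v_q\in A'$), so every $A'$-vertex of $P$ has index in $[p,q]$ and $|A'\cap V(P)|\le k(2d+1)$. Combining the two estimates with $|A|\ge(|V(G)|-2\ell)/m$ produces an upper bound on $\diam(G)$ contradicting the hypothesis.

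The argument above is routine; the delicate point --- the step I expect to be the real obstacle --- is extracting exactly the constant $mk(2d+1)+\ell-d$ from the final comparison. The crude estimate $(D+1)|A|/|V(G)|\ge(D+1-2\ell)/m$ (using $D+1\le|V(G)|$) only yields $\diam(G)\le mk(2d+1)+2\ell-1$. To sharpen it one should bound the ratio $|V(G)|/|A|$ via $|V(G)|\le m|A|+2\ell$ together with the lower bound $|A|\ge(D+1-2\ell)/m$, account for the two end-segments of $P$ that lie outside the window $[p,q]$ and are therefore disjoint from $A'$ (this asymmetry between the ends of $P$ is what converts the $2\ell$ of the density hypothesis into the single $\ell$ of the diameter hypothesis), and retain the $-1$ and the $-d$ coming from the sharp form of (i). I do not anticipate any conceptual difficulty beyond this bookkeeping.
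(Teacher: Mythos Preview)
Your argument for part (i) is correct and essentially the same idea as the paper's, just phrased via a quotient graph rather than a direct pigeonhole on a shortest path in $G[A\cup\partial A]$.

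For part (ii), however, there are two genuine problems. First, you invoke part (i) on translates of $A$, which requires $G[A\cup\partial A]$ to be connected; but that hypothesis belongs only to (i), not to (ii). This is easily repaired: if $\depth(A)\le d$ then $A$ is covered by the balls $B(z,d)$ with $z\in\partial A$, and each such ball meets a geodesic $P$ in at most $2d+1$ vertices, giving $|A'\cap V(P)|\le k(2d+1)$ directly without any connectedness assumption and without going through (i).

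The second problem is the one you flag yourself, and it is not mere bookkeeping. Your averaging argument gives $(D+1)|A|/|V(G)|\le k(2d+1)$, and combining this with $|V(G)|\le m|A|+2\ell$ and $|V(G)|\ge D+1$ can never do better than $D\le mk(2d+1)+2\ell-1$; the loss of $\ell+d-1$ against the target $mk(2d+1)+\ell-d$ is intrinsic to this route, not an artefact of sloppy estimates. The paper does something genuinely different: it packs, along a geodesic of length $r=mk(2d+1)+\ell-d+1$, one ball of radius $\ell$ (centred at an endpoint) together with $mk$ pairwise disjoint balls of radius $d$. Disjointness gives $b_\ell+mk\,b_d\le|V(G)|$, and vertex transitivity is used a second time to show $b_\ell\ge 2\ell+1$ (a ball of radius $\ell$ centred at an interior vertex of a long geodesic swallows $2\ell+1$ path vertices). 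Hence $b_d<(|V(G)|-2\ell)/(mk)$, and then $\depth(A)\le d$ forces $|A|\le k\,b_d<(|V(G)|-2\ell)/m$. The ``asymmetry'' that turns $2\ell$ into $\ell-d$ lives in this packing argument (the $\ell$-ball sits at the \emph{end} of the geodesic, so its footprint on the path is only $\ell+1$, while its cardinality is still $\ge 2\ell+1$), not in anything accessible from your averaging inequality. So to reach the stated constant you need to abandon the averaging and use the disjoint-ball count instead.
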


\begin{proof}
Let $x,y\in A$. Let $x_0x_1\dots x_t$ be a shortest path in $G[A\cup\partial A]$ from $x=x_0$ to $y=x_t$. For each $i$, there is a vertex $z\in \partial A$ at distance at most $d={\mathcal depth}(A)$ from $x_i$. By the minimality of $t$, each $z\in \partial A$ appears in this way for at most $2d+1$ indices $i\in\{0,1,\dots,t\}$.
Thus $t < |\partial A|(2d+1)$ and the same bound holds for the diameter of $A$.

For part (ii), let $x_0,x_r$ ($r=mk(2d+1)+\ell-d+1$) be vertices at distance
$r$ in $G$, and let $x_0,x_1,x_2,\dots,x_r$ be a shortest path joining them.
Consider the ball $B_0$ of radius $\ell$ centered at $x_0$ and balls
$B_i$ of radius $d$ centered at $x_{\ell+1-d+i(2d+1)}$, for $1\le i\le mk$.
Then $B_0,B_1,\dots,B_{mk}$ are pairwise disjoint. Since $|B_0|\ge 2\ell+1$,
we conclude that every ball of radius $d$ in $G$ contains less than
$\tfrac{1}{km}(|V(G)|-2\ell)$ vertices.
If $\depth(A) \allowbreak \le d$, then balls of radius $d$ centered
at $\le k$ vertices in $\partial A$ would cover $A$, so
$|A| < \frac{|V(G)| - 2\ell}{m}$, contrary to our assumption.
\end{proof}

\begin{lemma}
\label{get-ring_lem}
Let\/ $G$ be a connected vertex transitive graph, let $A \subseteq V(G)$ be finite and set\/
$k = |\partial A|$.  If\/ $\diam(G) \ge 31(k+1)^2$ and
$\depth(A)\ge k+1$ and $\depth(V(G) \setminus (A \cup \partial A)) \ge k+1$, then there
exist integers $s,t$ with $st \le \frac{k}{2}$ and a cyclic system $\vec{\sigma}$ so that $G$ is
$(s,t)$-ring-like and\/ $2st$-cohesive with respect to $\vec{\sigma}$.
\end{lemma}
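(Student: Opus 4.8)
The approach is to reduce the statement to the strengthened form of the Tube Lemma proved in Section~\ref{sect:tube lemma}: I will produce an integer $k'\le k-1$, a $(k',3k'+6)$-tube $B$ with boundary partition $\{L,R\}$, and verify that $|\partial B|\le k$ and $\depth(V(G)\setminus(B\cup\partial B))\ge k'+2$. That lemma then gives a cyclic system $\vec\sigma$ with respect to which $G$ is $(s,t)$-ring-like and $2st$-cohesive with $st\le\min\{|L|,|R|\}\le\tfrac12|\partial B|\le\tfrac k2$, which is exactly the desired conclusion. I will build $B$ inside $A$, so that $\partial B\subseteq A\cup\partial A$; then $V(G)\setminus(B\cup\partial B)$ contains $A':=V(G)\setminus(A\cup\partial A)$, and since $\depth(A')\ge k+1$ the depth condition on the complement of $B$ is automatic once $k'\le k-1$. (The degenerate case $k=1$, where the target bound $st\le\tfrac12$ is impossible, is disposed of separately.)

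First I would arrange that $A$ is ``long''. The three hypotheses are symmetric under $A\leftrightarrow A'$: indeed $\partial A'\subseteq\partial A$, $\depth(A')\ge k+1$ by hypothesis, and $\depth(V(G)\setminus(A'\cup\partial A'))\ge\depth(A)\ge k+1$ since $V(G)\setminus(A'\cup\partial A')\supseteq A$. On the other hand, cutting a diametral geodesic of $G$ at the last vertex that lies in $A\cup\partial A$ before it enters $A'$ gives $\diam(G)\le\diam(A\cup\partial A)+\diam(A'\cup\partial A')\le\diam(A)+\diam(A')+4$, so $\max\{\diam(A),\diam(A')\}\ge 15(k+1)^2$. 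Interchanging $A$ and $A'$ if necessary, assume $\diam(A)\ge 15(k+1)^2$.

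Next I would locate a long geodesic segment sitting deep inside $A$. Along a diametral geodesic $u_0u_1\cdots u_D$ of $G$, the set of indices $i$ with $\dist(u_i,\partial A)\le k+1$ is a union of at most $k$ intervals of total length at most $k(2k+3)$; deleting them breaks the geodesic into at most $k+1$ subpaths, each of which — being disjoint from $\partial A$ — lies entirely in $A$ or entirely in $A'$. One of them, say $Q=u_a\cdots u_b$, has length at least $\tfrac1{k+1}\bigl(31(k+1)^2-k(2k+3)\bigr)\ge 28(k+1)$, and by the symmetry above we may assume $Q\subseteq A$; moreover every vertex of $Q$ is at distance $\ge k+2$ from $\partial A$, so the ball of radius $k+1$ about it is contained in $A$. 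Using vertex transitivity, pick $\phi\in Aut(G)$ carrying a vertex of $Q$ near one end of $Q$ to a vertex of $Q$ near its middle; since these vertices are deep in $A$, the balls of radius $k+1$ about them lie inside both $A$ and $\phi(A)$, which forces $\phi(A)\cap A\neq\emptyset$, $\phi(A)\setminus(A\cup\partial A)\neq\emptyset$ and $A\setminus(\phi(A)\cup\partial\phi(A))\neq\emptyset$, and one can keep $\dist(\partial A,\partial\phi(A))\ge\tfrac{k+1}2>0$ so that $\partial A\cap\partial\phi(A)=\emptyset$. Put $B=A\cap\phi(A)$: it is a finite subset of $A$, its boundary lies in $(\partial A\cap\phi(A))\cup(A\cap\partial\phi(A))=:R\cup L$, and by submodularity (Lemma~\ref{uncrossing_lem}(i)) $|\partial B|+|\partial(A\cup\phi(A))|\le 2k$, so $|\partial B|\le k$ provided the crossing is set up so that the longer region $A\cup\phi(A)$ retains at least $k$ boundary vertices. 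Finally, because $Q$ is long and $B$ is squeezed between the two ``tips'' — where $\phi(A)$ pokes into $A$ and where $A$ pokes into $\phi(A)$ — the sets $L$ and $R$ can be taken at distance $\ge 3(k-1)+6$ apart, each of diameter $<k$, so that $B$ is a $(k-1,\,3(k-1)+6)$-tube with boundary partition $\{L,R\}$, and the strengthened Tube Lemma applies.

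The hard part will be the last step of the previous paragraph: showing $\phi$ can genuinely be chosen so that $\phi(A)$ crosses $A$ \emph{lengthwise} along $Q$ — equivalently, that $\partial A\cap\phi(A)$ and $A\cap\partial\phi(A)$ are the far-apart, bounded-diameter ``ends'' of a long thin region rather than being entangled or spread out along $A$. This is precisely where Lemma~\ref{balloon_lem} (used in the spirit of claims~(0) and~(2) of the proof of the Tube Lemma), Lemma~\ref{merge_lem}, and the full strength of the quadratic hypothesis $\diam(G)\ge 31(k+1)^2$ are needed, and it likely calls for a short case split: the case where $\partial A$ is contained in a ball of radius $O(k)$ is ruled out by a balloon-lemma argument (then $A$ and $A'$ would be thick regions on opposite sides of a small cut), and in the complementary case one extracts two far-apart sub-clusters of $\partial A$ to serve as $L$ and $R$ after the crossing.
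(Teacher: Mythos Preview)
Your outline has a genuine gap, and it is precisely the step you yourself flag as ``the hard part''. You define $B=A\cap\phi(A)$ and set $L=A\cap\partial\phi(A)$, $R=\partial A\cap\phi(A)$; for $B$ to be a $(k',3k'+6)$-tube you need $\diam(L),\diam(R)\le k'$. But nothing in your setup bounds the diameter of $R=\partial A\cap\phi(A)$: the $k$ vertices of $\partial A$ may be scattered over a region of $A$ of diameter comparable to $\diam(A)$ itself, and intersecting with $\phi(A)$ (which contains a ball of radius $k+1$ around a point deep in $A$, but could be much larger) does not localize them. The vague plan at the end --- rule out ``$\partial A$ in a small ball'' via Lemma~\ref{balloon_lem}, otherwise extract two far-apart clusters --- does not work as stated: the negation of ``$\partial A\subseteq B(y,O(k))$'' is merely that $\partial A$ has two points far apart, not that it decomposes into two tight clusters, and Lemma~\ref{merge_lem} requires you to already know your sets are $(k,k+2)$-tubes before it tells you anything. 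A second, related gap is the bound $|\partial B|\le k$: uncrossing only gives $|\partial B|+|\partial(A\cup\phi(A)\cup\partial A\cup\partial\phi(A))|\le 2k$, and you never establish that the second term is at least $k$ (you would need a minimality assumption on $|\partial A|$, which you do not set up, together with a depth check on both sides of that second set). There is also a small slip: you invoke the $A\leftrightarrow A'$ symmetry once to arrange $\diam(A)$ large and then a second time to force the long geodesic segment $Q$ into $A$; only one of these is free.

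The paper's proof avoids these difficulties by a different mechanism. It first passes to an $A$ minimizing $|\partial A|$ and then $|A|$ subject to the depth hypotheses, and uses uncrossing with a single-step shift to force $\depth(A)=k+1$ \emph{exactly}. This, via Lemma~\ref{diam-depth_obs}(i), makes $\diam(A)\le 2k^2+3k$ --- so $A$ itself is a small, controlled object rather than a long strip. The key structural step (Claim~(2)) then bootstraps copies of $A$ to show that every ball $B(x,n)$ (for $n$ in a wide range) can be sandwiched inside a set $D$ with $|\partial D|=k$ and $D\subseteq B(x,n+2k^2+2k-1)$: these are ``thick balls'' with minimal boundary. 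Finally, three such sets $C,D^-,D^+$ are placed along a long geodesic and part~(iii) of Lemma~\ref{uncrossing_lem} is used to show that $\partial C$ splits as $L\cup R$ with $|L|,|R|\ge k/2$, so $|L|=|R|=k/2$ and each is confined to a ball of radius $3k^2+5k$ around a known center; this is what gives the diameter control on $L$ and $R$ that your approach lacks. The strengthened Tube Lemma is then applied to $C$, not to $A\cap\phi(A)$.
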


\begin{proof}
We may assume that $A$ is a set which satisfies the assumptions of the lemma, and further,
is chosen so that

(i) $\,|\partial A|$ is minimum,

(ii) $|A|$ is minimum subject to (i).

Note that (ii) implies that $G[A]$ is connected and that
$|A| \le |V(G) \setminus (A \cup \partial A)|$.  We
proceed with two claims.

\bigskip

\noindent{\bf (1)} $\depth(A) = k+1$.

\smallskip

Suppose (for a contradiction) that $\depth(A) > k + 1$.  Choose a vertex
$x \in A$ at depth $k+2$, its neighbor $y \in A$ at depth $k+1$, and an automorphism
$\phi$ with $\phi(x) = y$.  Set $A_1 = A$ and $A_2 = \phi(A)$ and let the sets
$P,Q,S,T,U,W,X,Y,Z$ be as given in Figure 1.  If
$|S|,|X| \ge \tfrac{1}{4} (|V(G)| - 3k)$ then it follows from part (ii) of
Lemma \ref{diam-depth_obs} (with $m=4$, $d=k$, and $\ell=\lceil\tfrac32 k\rceil$)
that $\depth(S), \depth(X) \ge k+1$ and by uncrossing
that $|\partial S| + |\partial X| \le 2k$.  But then either $S$ or $X$ contradicts our choice of $A$.
Thus, we may assume that either $|S|$ or $|X|$ is less than
$\tfrac{1}{4} (|V(G)| - 3k)$.  If $G$ is finite
and $|S| \le \tfrac{1}{4} (|V(G)| - 3k)$, then
$|S \cup Z| + k \ge |S \cup W \cup Z| \ge \tfrac12 (|V(G)|- k)$ so
$|Z| \ge \tfrac{1}{4} (|V(G)| - 3k)$.  The same conclusion holds under the assumption
that $|X| \le \tfrac{1}{4} (|V(G)| - 3k)$, so by Lemma \ref{diam-depth_obs}(ii) we conclude that
$Z$ has depth $\ge k+1$.  Of course, $Z$ has depth $\ge k+1$ also when
$G$ is infinite.
It follows from our construction that $P$ has depth $\ge k+1$.
Now, by uncrossing, either $|\partial (P \cup Q \cup S \cup T \cup X)| < k$ and this set
contradicts the choice of $A$ for (i), or $|\partial P| \le k$ and this set contradicts the choice
of $A$ for (i) or (ii).

\bigskip

\noindent{\bf (2)} For every $x \in V(G)$ and every $n$,
$k \le n \le \diam(G) - 2k^2 - 2k-1$, there exists a vertex
set $D$ with $|\partial D| = k$ and
$B(x,n) \subseteq D \subseteq B(x,n + 2k^2 + 2k - 1)$.

\smallskip

Note first that this claim holds for $n=k$: by claim (1) we see that
$B(x,k)\subseteq \phi(A)$, where $\phi$ is an automorphism of $G$ mapping a vertex
at depth $k+1$ in $A$ onto $x$. Further, part (i) of Lemma \ref{diam-depth_obs}
shows that $\diam(A) = \diam(\phi(A)) <
k(2\,\depth(A)+1) = 2k^2 + 3k$. Thus,
$\phi(A)\subseteq B(x,3k+2k^2-1)$, so we may take $D=\phi(A)$ when $n=k$.

Suppose (for a contradiction) that such a set does not exist for $n$ and $x$, and choose a
maximal set $C \subseteq V(G)$ with $|\partial C| = k$ and
$B(x,k) \subseteq C \subseteq B(x,n+2k^2 + 2k - 1)$ (such a set exists as shown above).  Choose a shortest path $P$ from $x$ to $\partial C$, let $y$ be the
end of $P$ in $\partial A$, and choose a vertex $z$ on $P$ at distance $k$ from $y$.
Since $B(x,n)\not\subseteq C$, we have that $\dist(x,z) < n-k$.
Using the fact that (2)
holds when $n=k$, choose a set $D \subseteq V(G)$ with $|\partial D| = k$ so that
$B(z,k) \subseteq D \subseteq B(z,2k^2 + 3k-1)$.
Now $V(G) \setminus (C \cup D)$ contains a ball of radius $\ge k+1$
so $\depth(V(G) \setminus (C \cup \partial C \cup D \cup \partial D) \ge k+1$.
If $| \partial (C \cap D)| < k$ then $B(y,k-1) \subseteq C \cap D$ by construction so
$C \cap D$ has depth $\ge k$ and contradicts our choice of $A$ for (i).  Otherwise, it follows from uncrossing (Lemma \ref{uncrossing_lem}) that
$|\partial (C \cup D)| \le k$. Since $\dist(x,z) < n-k$, we have
$C \cup D \subseteq B(x,n+2k^2  + 2k-1)$. Since $y\in D\setminus C$,
we have $|C\cup D| > |C|$, so this contradicts our choice of $C$.
This completes the proof of claim (2).

\bigskip

Set $q = 12k^2 + 15k + 3$ and $h = 6k^2 + 10k$.  Let
$v_{-q}, v_{-q+1}, \ldots, v_{q + k^2 + h + 2}$ be the vertex sequence of a shortest path in $G$.  Next, apply (2) to choose sets $C,D^-,D^+ \subseteq V(G)$
with $|\partial C| = |\partial D^-| = |\partial D^+| = k$ so that the following hold:
\begin{eqnarray*}
B(v_0, q - k^2 - k) \subseteq & C & \subseteq B( v_0, q + k^2 +k-1)         \\
B(v_{-q}, k^2 + 3k) \subseteq & D^- &  \subseteq B(v_{-q}, 3k^2 + 5k-1)   \\
B(v_q, k^2 + 3k) \subseteq &D^+&  \subseteq B(v_q, 3k^2 + 5k-1)
\end{eqnarray*}
Then $B(v_{q - k^2 - 2k}, k) \subseteq C \cap D^+$ and
$B(v_{q + k^2 + 2k},k) \subseteq D^+ \setminus C$ so these sets have depth $\ge k+1$.  Furthermore, $B(v_0,k) \subseteq C \setminus D^+$
and $B(v_{q+ k^2 + h},k) \subseteq V(G) \setminus (C \cup D^+)$ so these sets have depth $\ge k+1$.
It now follows from our choice of $A$ and uncrossing, that
$|\partial (C \cap D^+)| = |\partial (C \setminus D^+)|
= |\partial (D^+ \setminus C)|
= |\partial (C \cup D^+)| = k$.  But then by part (iii) of
Lemma \ref{uncrossing_lem} we find that
$R = (D^+ \cup \partial D^+) \cap \partial C$ satisfies $|R| \ge \frac{k}{2}$.  By a similar argument, we
find that $L = (D^- \cup \partial D^-) \cap \partial C$ satisfies $|L| \ge \frac{k}{2}$.  Thus
$\{L,R\}$ is a partition of $\partial C$.  If $x,y \in R$, then
$x,y \in B(v_q,3k^2 + 5k)$ so $\dist(x,y) \le 6k^2 + 10k = h$.  Similarly if $x,y \in L$, then
$\dist(x,y) \le h$.  If $x \in L$ and $y \in R$, then
$\dist(x,y) \ge \dist(v_{-q},v_{q}) -
\dist(v_{-q},x) - \dist(y,v_q)
\ge 2q - 6k^2 - 10k = 3h+6$.  Thus $C$ is a $(3h+6,h)$-tube with boundary partition
$\{L,R\}$ and $B(v_{q + k^2 + h+2},h+2)$ is disjoint from $C$ so
$\depth(V(G) \setminus (C \cup \partial C)) \ge h+2$.  Applying the tube lemma to this yields the desired conclusion.
\end{proof}

We are prepared to complete the proof of our main theorem.

\begin{proof}[Proof of Theorem \ref{main_thm}]
Let $A \subseteq V(G)$ satisfy the assumptions of the theorem.
Let $A' = V(G) \setminus (A \cup \partial A)$. Observe that $\partial A' = \partial A$.
If $G$ is finite, then by assumption
$|V(G) \setminus (A \cup \partial A)| \ge \frac{1}{2}(|V(G)| - 2k)$.
By using statement (ii) of Lemma \ref{diam-depth_obs} applied to the set $A'$
with $m=2$ and $l=d=k$, we conclude that $\depth(A') \ge k+1$.
The same conclusion holds trivially if $G$ is infinite.
If $\depth(A) \le k$ then by
(i) of Lemma \ref{diam-depth_obs} and Theorem \ref{diam_bound_thm} we find that
$|A| \le 2k^3+k^2$ so case (i) holds. Note that the parenthetical comment in (i)
is a direct application of Theorem \ref{vertex-con_thm}. Otherwise it follows from Lemma \ref{ring-sep_lem} and Lemma \ref{get-ring_lem} that (ii) holds.
\end{proof}

%
\section{Proofs of main corollaries}
\label{sect:proofs}

It remains to prove the main corollaries of Theorem \ref{main_thm}.

\begin{proof}[Proof of Theorem \ref{thm:3}]
Let $C=B\setminus\{e\}$, where $e$ is the identity element of the group.
Let $G$ be the Cayley graph of $\mathcal G$ with respect to the symmetric generating set $C$. Observe that $\partial A = BA \setminus A$ and that $A\subseteq BA$. We are going to apply Theorem \ref{main_thm} to the set $A\subset V(G)$. Let us first assume that $G[A\cup \partial A]$ is connected. Since $|BA| < |A| + \tfrac{1}{2}|A|^{1/3}$, and since $A\subseteq BA$, we see that $k=|\partial A| < \frac{1}{2}|A|^{1/3}$. Since $\mathcal G$ is infinite, the assumptions of Theorem \ref{main_thm} are satisfied
and we have one of the outcomes (i) or (ii) of Theorem \ref{main_thm}. If (ii) holds, then let $B_n$ ($n\in\ZZ$) be the blocks of imprimitivity corresponding to the $(s,t)$-ring-like structure of $G$, where $B_0$ contains the identity element of the group. Let
$$
   N = \{ g\in {\mathcal G} \mid B_n g = B_n \text{ for every } n\in \ZZ\}.
$$
Since $\mathcal G$ acts regularly on the vertex set of its Cayley graph $G$ by right multiplication, we conclude that $|N|\le |B_0|=s\le \tfrac{1}{2} k$. It is easy to see that $N$ is a normal subgroup of $\mathcal G$ and that ${\mathcal G}/N$, which is acting transitively on the two-way-infinite quotient, is either cyclic or dihedral. This gives one of the outcomes of the theorem.

Suppose now that we have outcome (i) of Theorem \ref{main_thm}. In that case we conclude, in particular, that $|A|\le 2k^3 + k^2$. If we use the fact that $k<\frac{1}{2}|A|^{1/3}$, we conclude that
$$
   |A| < \tfrac{1}{4}|A| + \tfrac{1}{4}|A|^{2/3} \le \tfrac{1}{2}|A|.
$$
This contradiction completes the proof when $G[A\cup \partial A]$ is connected.

If $G[A\cup \partial A]$ is not connected, then $A$ can be partitioned into sets $A_1,\dots,A_l$ such that the sets $A_i\cup \partial A_i$ ($1\le i\le l$) induce connected subgraphs of $G$ and partition $A\cup\partial A$. Let us define $k_i=|\partial A_i|$ ($1\le i\le l$). We may assume that we have the first outcome of Theorem \ref{main_thm} for each $A_i$, which, as shown above, implies that $k_i \ge \frac{1}{2}|A_i|^{1/3}$.
It follows that
$$
   k = \sum_{i=1}^l k_i \ge \frac{1}{2} \sum_{i=1}^l |A_i|^{1/3} \ge
   \frac{1}{2} \biggl(\sum_{i=1}^l |A_i|\biggr)^{1/3} = \frac{1}{2} |A|^{1/3}.
$$
\end{proof}


Next we turn towards the proof of Corollary \ref{cor:19}. In this corollary we use the notion of the tree-width. This parameter, which has been introduced in the graph minors theory, formalizes the notion of a graph being ``tree-like''. If $G$ is a graph and $T$ is a tree, then a \DEF{tree-decomposition} of $G$ in $T$ is family of subtrees $T_v\subseteq T$ ($v\in V(G)$), such that whenever $uv\in E(G)$, the corresponding subtrees intersect, $T_v\cap T_u\ne \emptyset$. The \DEF{order} of the tree-decomposition is defined as the maximum cardinality of the sets $Y_t = \{v\in V(G)\mid t\in V(T_v)\}$ taken over all vertices $t\in V(T)$. Finally, the \DEF{tree-width} of $G$ is the minimum order of a tree-decomposition of $G$ minus 1. One can only consider tree-decompositions without ``redundancies'' in which case the set $Y_s\cap Y_t$ is a separator of $G$ for every edge $st\in E(T)$. This explains why the notion of the tree-width is related to the subject of this paper.

We will make use of the following result; see \cite{Re} or \cite[Section 11.2]{FG}. We include a short proof for completeness.

\begin{lemma}[Balanced Separator Lemma]
\label{lem:balanced sep}
If a graph\/ $G$ has tree-width less than $k$, then for every vertex set $W\subseteq V(G)$ there exists a set
$S\subseteq V(G)$ with $|S|\le k$ such that every connected component of\/ $G-S$ contains at most $\tfrac{1}{2}|W|$ vertices from $W$.
\end{lemma}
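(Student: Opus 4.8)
The plan is to realize the separator $S$ as a single bag of a tree-decomposition and to pick that bag by a centroid/orientation argument. Fix a tree-decomposition $(T,(T_v)_{v\in V(G)})$ of $G$ of order at most $k$, so that every bag $Y_t=\{v:t\in V(T_v)\}$ satisfies $|Y_t|\le k$; since the only use of the lemma will be for finite graphs (this is all Corollary~\ref{cor:19} requires), I would take $T$ finite. The one structural fact I would record first is the standard separation property of tree-decompositions: for an edge $e=st$ of $T$, writing $T_s^e\ni s$ and $T_t^e\ni t$ for the two components of $T-e$ and $U_s^e=\bigcup_{u\in V(T_s^e)}Y_u$, $U_t^e=\bigcup_{u\in V(T_t^e)}Y_u$, connectivity of each subtree $T_v$ forces $U_s^e\cap U_t^e\subseteq Y_s\cap Y_t$ and prevents any edge of $G$ from joining $U_s^e\setminus U_t^e$ to $U_t^e\setminus U_s^e$.

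Next I would orient some edges of $T$ toward the ``heavy'' side with respect to $W$: declare $e=st$ \emph{oriented from $s$ to $t$} exactly when $|(U_t^e\setminus U_s^e)\cap W|>\tfrac12|W|$. Because $(U_t^e\setminus U_s^e)\cap W$ and $(U_s^e\setminus U_t^e)\cap W$ are disjoint, at most one direction can qualify, so this is a well-defined partial orientation of the edges of $T$. Since $T$ is a finite tree, repeatedly following out-edges must terminate — it cannot return to a previously visited node without producing a cycle — so there is a node $t^*$ with no out-edge. I would then set $S=Y_{t^*}$, so that $|S|\le k$ as required.

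It remains to check balance. Deleting $t^*$ from $T$ yields components $T^{(1)},\dots,T^{(d)}$, each joined to $t^*$ by one edge $e_i$; arguing again from connectivity of the subtrees $T_v$, the sets $U_i=\bigl(\bigcup_{u\in V(T^{(i)})}Y_u\bigr)\setminus S$ are pairwise disjoint, their union is $V(G)\setminus S$, and no edge of $G$ runs between two of them, so every connected component of $G-S$ is contained in a single $U_i$. From $U_{t^*}^{e_i}\cap U_{t_i}^{e_i}\subseteq Y_{t^*}\cap Y_{t_i}\subseteq S$ we get $U_i\subseteq U_{t_i}^{e_i}\setminus U_{t^*}^{e_i}$; hence if $|U_i\cap W|>\tfrac12|W|$ then $e_i$ would be oriented away from $t^*$, contradicting the choice of $t^*$. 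Therefore $|U_i\cap W|\le\tfrac12|W|$ for every $i$, and a fortiori every component of $G-S$ meets $W$ in at most $\tfrac12|W|$ vertices.

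I do not expect any serious obstacle here: the whole content is the combination of the separation property with the ``follow out-edges until stuck'' observation, and the only points needing a little care are making that sink-extraction rigorous (hence the reduction to finite $T$, harmless for our application) and the routine verification of the tree-decomposition facts used about the $U_i$ and $U_s^e,U_t^e$.
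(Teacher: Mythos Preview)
Your proof is correct and follows essentially the same approach as the paper: both orient the edges of the tree-decomposition toward the side that is heavy with respect to $W$ and take $S$ to be the bag at a sink of this partial orientation. The only cosmetic differences are that the paper first reduces to a minimal tree-decomposition (so that each $Y_s\cap Y_t$ genuinely separates) and observes that every node has out-degree at most~$1$ to deduce the existence of a sink, whereas you locate the sink by following out-edges in the finite tree; the content is the same.
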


\begin{proof}
Let us consider a tree-decomposition of $G$ in a tree $T$ of order at most $k$, and assume that subject to these conditions, $T$ is minimum possible. Let $st\in E(T)$ be an edge of $T$. If $Y_t\subseteq Y_s$, then we could replace the tree $T$ by the smaller tree $T'=T/st$ obtained by contracting the edge $st$, and replace each $T_v$ containing the edge $st$ by the subtree $T_v'=T_v/st\subseteq T'$ to obtain a tree-decomposition of the same order and having smaller tree. It follows that the set $Y_t\cap Y_s$ separates the graph $G$, i.e. $G-(Y_t\cap Y_s)$ is disconnected. Let us observe that this also implies that for each leaf $t$ in $T$, there is a vertex $v\in V(G)$ such that $T_v = \{t\}$.

Suppose that $st\in E(T)$ and that there is a component $C$ of $G-(Y_t\cap Y_s)$ containing more than $\tfrac{1}{2}|W|$ vertices from $W$. Let $T_1$ and $T_2$ be the subtrees of $T-st$ containing the vertex $s$ and $t$, respectively. If $V(C)\subseteq \cup_{p\in V(T_1)} Y_p$, then we orient the edge $st$ from $t$ to $s$, otherwise from $s$ to $t$. By repeating this for all edges of $T$, we get a (partial) orientation of the edges of $T$ with the property that the out-degree of each vertex is at most 1. Therefore, there is a vertex $t\in V(T)$ whose out-degree is 0. Clearly, the set $S=Y_t$ satisfies the conclusion of the lemma.
\end{proof}

\begin{proof}[Proof of Corollary \ref{cor:19}]
Let $G$ be a connected finite vertex transitive graph and suppose that the tree-width of $G$ is less than $k$.
Let us consider a tree-decomposition of $G$ in a tree $T$ of order at most $k$, and assume that subject to these conditions, $T$ is minimum possible. Let $t\in V(T)$ be a leaf of $T$. As shown in the proof of the Balanced Separator Lemma, there is a vertex $v\in V(G)$ such that $T_v = \{t\}$. All neighbors of this vertex are in $Y_t$ and $|Y_t|\le k$, hence the degree of $v$ in $G$ is at most $k-1$. If the diameter of $G$ is less than $31(k+1)^2$, then we have the last outcome of the corollary, so we may assume henceforth that the diameter is at least $31(k+1)^2$.

Let $W$ be a path in $G$ joining two vertices that are at distance $31(k+1)^2$. By the Balanced Separator Lemma \ref{lem:balanced sep}, there exists a set $S\subset V(G)$ with $|S|\le k$ such that no component of $G-S$ contains more than $\tfrac{1}{2}|W|$ vertices of $W$. For each vertex $u\in S$, let $A_u$ be the vertex set consisting of all vertices in $W$ whose distance from $u$ in $G$ is at most $k$. Let $W_u$ be the smallest segment on $W$ that contains $A_u$. As any two vertices in $A_u$ are at distance at most $2k$ from each other, we have $|W_u|\le 2k+1$. Since
$$
   |W\setminus (\cup_{u\in S} W_u)| \ge |W| - k(2k+1) > \tfrac{1}{2}|W|
$$
there are at least two components of $G-S$ that contain a vertex in $W\setminus (\cup_{u\in S} W_u)$. Let $A$ be the vertex set of the smaller one of these two components. Then $|A|<\tfrac{1}{2}|V(G)|$ and $\partial A\subseteq S$, thus $|\partial A|\le k$. Also, $G[A]$ is connected and since $A$ contains a vertex in $W\setminus (\cup_{u\in S} W_u)$, we have that $\depth(A) > k$. By Theorem \ref{main_thm} we conclude that $G$ is $(s,t)$-ring-like with $2st \le k$, i.e., we have the outcome (i) of Corollary \ref{cor:19}. This completes the proof.
\end{proof}

\end{document}